\title{Convergence Analysis of Gradient-Based Learning with Non-Uniform Learning Rates in Non-Cooperative Multi-Agent Settings}
\author{\name Benjamin Chasnov \email bchasnov@uw.edu\\
\addr Department of Electrical and Computer Engineering\\
    University of Washington\\
    \AND
    \name Lillian J. Ratliff \email ratliffl@uw.edu \\
    \addr Department of Electrical and Computer Engineering\\
       University of Washington \\    
       \AND
\name Eric Mazumdar \email mazumdar@eecs.berkeley.edu\\
\addr Department of Electrical Engineering and Computer Sciences\\
University of California, Berkeley\\
       \AND
       \name Samuel A. Burden \email sburden@uw.edu \\
         \addr Department of Electrical and Computer Engineering\\
       University of Washington}
\newcommand{\tg}{\tilde{g}}
\begin{document}

\maketitle

\begin{abstract}
    Considering a class of gradient-based multi-agent learning algorithms in
    non-cooperative settings, we provide local convergence guarantees to a
    neighborhood of a \emph{stable} local Nash equilibrium. In particular, we consider
    continuous games where agents learn in (i) deterministic settings with
    oracle access to their gradient and (ii) stochastic settings with an
    unbiased estimator of their gradient. Utilizing the minimum and maximum
    singular values of the \emph{game Jacobian}, we provide finite-time
    convergence guarantees in the deterministic case. On the other hand, in the
    stochastic case, we provide concentration bounds guaranteeing that with high
    probability agents will converge to a neighborhood of a stable local Nash
    equilibrium in finite time. Different than other works in this vein, we also
    study the effects of non-uniform learning rates on the learning dynamics and
    convergence rates. We find that much like preconditioning in optimization,
    non-uniform learning rates cause a distortion in the vector field which can,
    in turn, change the rate of convergence and the shape of the region of
    attraction. 
The analysis is supported by numerical examples that illustrate different
aspects of the theory. We conclude with discussion of the results and open
questions. 
\end{abstract}

\section{Introduction}
\label{sec:introduction}
The characterization and computation of equilibria such as \emph{Nash equilibria} and its refinements
constitutes a  significant focus in non-cooperative game theory. Several 
natural questions  arises including ``how do players find 
such equilibria?'' and ``how should the learning process be
interpreted?'' With these questions in mind, a variety of fields have focused
their attention on the problem of learning in games. This has, in turn, lead to a plethora
of learning algorithms including gradient play, fictitious play, best response,
and multi-agent reinforcement learning
among others~\cite{fudenberg:1998aa}.

From an applications point of view, a more recent trend is in the adoption of game theoretic models of algorithm interaction  in
machine learning applications.  
For instance, game theoretic
tools  are  being  used  to  improve  the  robustness  and
generalizability of machine learning algorithms; e.g., generative adversarial networks have become a popular topic
of  study  demanding  the  use  of  game  theoretic  ideas  to
provide  performance  guarantees 
\cite{daskalakis:2017aa}. 
In other work from the learning community, game theoretic concepts are being
leveraged to analyze the interaction of learning agents---see, e.g.,
\cite{heinrich:2016aa,mazumdar:2018aa,balduzzi:2018aa,tuyls:2018aa, mertikopoulos:2019aa}. 
Even more recently, convergence analysis to Nash equilibria has been called into
question~\cite{papadimitriou:2018aa}; in its place is a proposal to consider 
game dynamics as the \emph{meaning of the
game}. This is an interesting perspective as it is
well known that in general learning dynamics do not obtain an Nash equilibrium
even asymptotically---see, e.g.,~\cite{hart:2003aa}---and, perhaps more interestingly, many learning dynamics
exhibit very interesting limiting behaviors including periodic orbits and
chaos---see, e.g.,~\cite{benaim:1999ab,benaim:2012aa,hommes:2012aa,hofbauer:1996aa}.

Despite this activity, we still lack a complete understanding of the dynamics and limiting behaviors
of coupled, competing learning algorithms. 
One may imagine that the myriad results on convergence of gradient
descent in
optimization readily extend to the game setting. 
Yet, they do not since
gradient-based learning schemes in games \emph{do not correspond to gradient
flows}, a class of flows that are guaranteed to converge to local minimizers
almost surely. 
In
particular, the gradient-based learning dynamics for competitive, multi-agent
settings have a \emph{non-symmetric Jacobian} and as a consequence their dynamics may admit
complex eigenvalues and non-equilibrium limiting behavior such as periodic
orbits.
In short, this fact makes it difficult to extend many of the optimization
approaches to  convergence in single-agent optimization settings to multi-agent
settings primarily due to the fact that steps in the direction of individual
gradients of players' costs do not guarantee that each agents cost decreases.
In fact, in games, as our examples highlight, a player's cost can 
increase when they follow the gradient of their
own cost. Counterintuitively, agents can also converge to local maxima of their own costs despite descending their own gradient. These behaviors are due to the coupling between the agents.

Some of the questions that remain unaddressed and to which we provide
partial answers include the derivation of error bounds and convergence rates. These are important for 
ensuring performance guarantees on the collective behavior and can help provide guarantees on subsequent control or incentive
policy synthesis. We also investigate the question of how naturally arising
features of the learning process for autonomous agents, such as their learning
rates, impact the learning path and limiting behavior. This further exposes
interesting questions about the overall quality of the limiting behavior and the
cost accumulated along the learning path---e.g., is it better to be a slow or
fast learner both in terms of the cost of learning and the learned behavior? %
\paragraph{Contributions.} We study
convergence of a broad class of gradient-based multi-agent learning algorithms
in non-cooperative settings by leveraging the framework of $n$-player continuous
games along with tools from numerical optimization and dynamical systems theory. We consider a class of learning algorithms 
\[x_i^+=x_i-\gamma_i g_i(x_i,x_{-i})\]
where $x_i$ is the choice variable or action of player $i$, $\gamma_i$ is its learning rate, and $g_i$ is derived from the gradient of a function that abstractly
represents the cost of player $i$. The key feature of non-cooperative settings is coupling of an agent's cost through all other agents' choice variables $x_{-i}$.

We consider two settings: (i)
agents have oracle access to $g_i$ and (ii) agents have an unbiased estimator
for $g_i$. The class of gradient-based learning algorithms we study encompases a
wide variety of approaches to learning in games including multi-agent policy
gradient, gradient-based approaches to adversarial learning, and multi-agent
gradient-based online optimization.
For both the deterministic (oracle gradient access) and the stochastic (unbiased
estimators) settings, we provide convergence results for both uniform learning
rates---i.e., where $\gamma_i=\gamma$ for each player $i\in\{1, \ldots,
n\}$---and for non-uniform learning rates.  The latter of which arises more
naturally in the study of the limiting behavior of autonomous learning agents.

In the deterministic setting, we derive asymptotic and finite-time convergence rates for the
coupled learning processes to a refinement of local Nash equilibria known as
differential Nash equilibria~\cite{ratliff:2016aa} (a class of equilibria that are generic amongst
local Nash equilibria). In the stochastic setting, leveraging the results of
stochastic approximation and dynamical systems, we derive asymptotic convergence guarantees
to stable local Nash equilibria as well as high-probability, finite-time
guarantees for convergence to a neighborhood of a Nash equilibrium. 
The analytical results are supported by several illustrative numerical examples.
We also provide discussion on the effect of non-uniform learning rates on the
learning path---that is, different learning rates \emph{warp} the vector field
dynamics. Coordinate based learning rates are typically leveraged in gradient-based optimization
schemes to speed up convergence or avoid poor quality local minima. In games,
however, the interpretation is slightly different since each of the coordinates of the
dynamics corresponds to minimizing a different cost function along the
respective coordinate axis. The resultant effect is 
a distortion of the  vector field 
in such a way that it has the effect of leading the joint action to a point which has a lower
value for the \emph{slower player} relative to the flow of the dynamics given a
uniform learning rate and the same initialization. 
In this sense, it seems that the answer to the question posed above is that 
it is most beneficial for an agent to have the
slower learning rate.

\paragraph{Organization.} The remainder of the paper is organized as follows. We
start with mathematical and game-theoretic preliminaries in
Section~\ref{sec:prelims} which is followed by the main convergence results for
the deterministic setting (Section~\ref{sec:deterministic}) and the stochastic
setting (Section~\ref{subsec:stochastic_fintesample}). Within each of the latter
two sections, we present convergence results for both the case where agents have uniform and
non-uniform learning rates. In Section~\ref{sec:examples}, we present several
numerical examples which help to illustrate the theoretical results and also
highlight some directions for future inquiry. Finally, we conclude with
discussion and future work in Section~\ref{sec:discussion}.

\section{Preliminaries}
\label{sec:prelims}
Consider a setting in which at iteration $k$, each agent $i\in
\mc{I}=\{1,\ldots,n\}$ updates their choice variable $x_i\in X_i=\R^{d_i}$ by the process
\begin{equation}
    \begin{aligned}
        x_{i,k+1} &=  x_{i,k} - \gamma_{i,k} g_i(x_{i,k},x_{-i,k}).
    \end{aligned}
    \label{eq:simgrad}
\end{equation}
where $\gamma_i$ is agent $i$'s learning rate, $x_{-i}=(x_j)_{j\in \mc{I}/\{i\}}\in \prod_{j\in \mc{I}/\{i\}}X_j$ denotes the choices of all agents
excluding the $i$-th agent, and $(x_i,x_{-i})\in X=\prod_{i\in \mc{I}} X_i$.
Within the above setting, the class of learning algorithms we
consider is such that for each $i\in \mc{I}$, there exists a sufficiently smooth
function $f_i\in C^q(X,\mb{R})$, $q\geq 2$ such that $g_i$ is either $D_if_i$,
where $D_i(\cdot)$ denotes the derivative with respect to $x_i$, or an
unbiased estimator of $D_if_i$---i.e., $g_i\equiv \widehat{D_if_i}$ where
$\mb{E}[\widehat{D_if_i}]=D_if_i$.

The collection of costs $(f_1,\ldots, f_n)$ on $X=X_1\times \cdots \times X_n$ where
 $f_i:X\rar \mb{R}$ is agent $i$'s cost function and $X_i= \mb{R}^{d_i}$ is their action
 space defines a \emph{continuous game}. In this continuous game abstraction, each player $i\in
\mc{I}$ aims
to selection an action $x_i \in X_i$ that minimizes their cost $f_i(x_i,
x_{-i})$ given the actions of all other agents, $x_{-i}\in X_{-i}$.  That is, players myopically update their actions by
following the gradient of their cost with respect to their own choice variable.
For a symmetric matrix $A\in
\mb{R}^{d\times d}$, let
$\lambda_d(A)\leq \cdots \leq \lambda_1(A)$ be its eigenvalues. For a matrix $A\in \mb{R}^{d\times d}$, let $\spec(A)=\{\lambda_j(A)\}$ be the
spectrum of $A$.
  \begin{assumption}
      For each $i\in \mc{I}$, $f_i\in C^r(X, \mb{R})$ for $r\geq 2$ and $\omega(x)\equiv (D_1f_1(x)\ \cdots \ D_nf_n(x))$ is
     $L$--Lipschitz.
     \label{ass:fass}
 \end{assumption}

  Let $D_i^2f_i$ denote the second partial derivative of $f_i$ with respect to
 $x_i$ and $D_{ji}f_i$ denote the partial derivative of $D_if_i$ with respect to
 $x_j$.
 The \emph{game Jacobian}---i.e., the Jacobian of $\omega$---is given by
\[J(x)=\bmat{D_{1}^2f_1(x) & \cdots &D_{1n}f_1(x)\\\vdots & \ddots &
    \vdots\\
    D_{n1}f_n(x) &\cdots &
    D_{n}^2f_n(x)}.\]
The entries of the above matrix are dependent on $x$, however, we drop this
dependence where obvious.
Note that each $D_{i}^2f_i$ is symmetric under Assumption~\ref{ass:fass}, yet
$J$ is not. This is an important point and causes the subsequent analysis to
deviate from the typical analysis of  (stochastic) gradient
descent.

The most common characterization of limiting behavior in games is that of a Nash
equilibrium. The following definitions are useful for our analysis.

\begin{definition}
  \label{def:SLNE}
  A strategy $x\in X$ is a {local Nash equilibrium}
  for the game $(f_1, \ldots, f_n)$ if for each $i\in\mc{I}$ there exists
  an open set $W_i\subset X_i$ such
  that $x_{i}\in W_i$ and 
  $f_i(x_i,x_{-i})\leq f_i(x_i',x_{-i})$
  for all $x_{i}'\in W_i$.
    If the above inequalities are strict, 
  $x$ is a {strict local Nash equilibrium}. 
\end{definition}
\begin{definition}
A point $x\in X$ is said to be a {critical point} for the game if
    $\omega(x)=0$. 
\end{definition}
We denote the set of critical points as $\mc{C}=\{x\in X|\ \omega(x)=0\}$.
Analogous to single-player optimization settings, for each player, viewing all other
players' actions as fixed, there are necessary and sufficient
conditions which characterize local optimality.
  \begin{proposition}[\cite{ratliff:2016aa}] If $x$ is a local Nash equilibrium
      of the game $(f_1, \ldots, f_n)$, then 
$\omega(x)=0$ and $D_{i}^2f_i(x)\geq 0$.  On the other hand, if $\omega(x)=0$ and
$D_{i}^2f_i(x)>0$, then $x\in X$ is a local Nash equilibrium.
  \end{proposition}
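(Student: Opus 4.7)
The plan is to reduce both implications to the classical first- and second-order conditions for an unconstrained local minimizer applied, one player at a time, to the function $x_i \mapsto f_i(x_i, x_{-i})$ with the other players' coordinates frozen at the candidate point. Since the Nash condition in Definition~\ref{def:SLNE} says exactly that $x_i$ is a local minimizer of this restricted function on an open set $W_i\subset X_i$, and since $f_i\in C^r(X,\mb{R})$ with $r\geq 2$ under Assumption~\ref{ass:fass}, each restricted function is $C^2$ on an open subset of Euclidean space, so the standard calculus results apply verbatim.

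For the necessary direction I would first fix $i\in\mc{I}$ and observe that $x_i$ is an interior local minimizer of $h_i(\cdot):=f_i(\cdot,x_{-i})$ on $W_i$. Fermat's theorem gives $Dh_i(x_i)=D_if_i(x)=0$; running $i$ through $\mc{I}$ and stacking yields $\omega(x)=0$. The second-order necessary condition for unconstrained $C^2$ minimization then gives $D^2h_i(x_i)=D_i^2f_i(x)\succeq 0$, which is the claimed semidefiniteness. No interaction terms $D_{ji}f_i$ enter because the other coordinates are held fixed throughout, so only the block-diagonal blocks of the game Jacobian are constrained by the Nash condition.

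For the sufficient direction I would start from $\omega(x)=0$, which reads off as $Dh_i(x_i)=0$ for every $i$, and from $D^2h_i(x_i)=D_i^2f_i(x)\succ 0$. The standard second-order sufficient condition (a Taylor expansion of $h_i$ about $x_i$, with the remainder controlled using continuity of $D^2h_i$) then produces an open neighborhood $W_i\subset X_i$ of $x_i$ on which $h_i(x_i)<h_i(x_i')$ for every $x_i'\in W_i\setminus\{x_i\}$. Taking these $W_i$'s simultaneously gives the strict Nash inequalities of Definition~\ref{def:SLNE}, hence the non-strict ones as well.

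There is no real obstacle here: the statement is a direct translation of the textbook optimality conditions in the game setting, made possible by the decoupling that occurs when each player unilaterally deviates. The only mildly delicate point is to make sure the proof yields a \emph{common} neighborhood on which the second-order Taylor remainder is dominated by the quadratic form $\tfrac12\langle D_i^2f_i(x)v,v\rangle$ in the sufficient direction; this follows from continuity of $D_i^2f_i$ at $x$ together with the strict positive definiteness assumption, and it uses nothing beyond Assumption~\ref{ass:fass}.
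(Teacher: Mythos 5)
Your proof is correct and follows exactly the argument behind this result: the paper itself does not reproduce a proof but cites \cite{ratliff:2016aa}, where the proposition is established precisely by applying the classical first- and second-order optimality conditions to each player's restricted problem $x_i\mapsto f_i(x_i,x_{-i})$ with $x_{-i}$ frozen. Your handling of both directions, including the observation that only the block-diagonal terms $D_i^2f_i$ are constrained and that the sufficient direction actually yields a strict local Nash equilibrium, matches the intended argument.
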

The sufficient conditions in the above result give rise to the following definition of a
differential Nash equilibrium.
\begin{definition}[\cite{ratliff:2016aa}]
  \label{def:DNE}
  A strategy $x\in X$ is a {differential Nash equilibrium}
  if $\omega(x)=0$  and  $D^2_{i}f_i(x)>0$ for each $i\in\mc{I}$. 
\end{definition}
Differential Nash need not be isolated. However, if
 $J(x)$ is
non-degenerate---meaning that $\det J(x)\neq 0$---for a differential Nash $x$,
then $x$ is an \emph{isolated strict local Nash equilibrium}. 
Non-degenerate differential Nash are \emph{generic} amongst local Nash
equilibria and they are \emph{structurally
stable}~\cite{ratliff:2014aa} which ensures they persist under small perturbations.
This result also
implies an asymptotic convergence result:
 if the spectrum of $J$  is strictly in
the right-half plane (i.e.~$\spec(J(x))\subset\mb{C}_+^\circ$), then a differential Nash equilibrium $x$ is
(exponentially) attracting under the flow of
$-\omega$~\cite[Proposition~2]{ratliff:2016aa}. We say such equilibria are
\emph{stable}.

\section{Deterministic Setting}
\label{sec:deterministic}
The multi-agent  learning framework we analyze is such that each
agent's rule for updating their choice variable consists of the agent modifying
their action $x_i$ in the direction of their individual gradient $D_if_i$.
 Let us first consider the setting in which  each agent $i$ has oracle access to
$g_i$. 
The learning dynamics are given by
\begin{equation}
    x_{k+1}=x_k-\Gamma \omega(x_k)\label{eq:det}
\end{equation}
where $\Gamma=\mathrm{blockdiag}(\gamma_1I_{d_1}, \ldots, \gamma_nI_{d_n})$ with 
$I_{d_i}$ denoting the $d_i\times d_i$ identity matrix.
Within this setting we consider both the cases where the agents have a
constant \emph{uniform} learning rate---i.e., $\gamma_i\equiv \gamma$---and where
their learning rates are \emph{non-uniform}, but constant---i.e., $\gamma_i$ is
not necessarily equal to $\gamma_j$ for any $i,j\in\mc{I}$, $j\neq i$.

 Let 
 $S(x)=\frac{1}{2}(J(x)+J(x)^T)$ be the symmetric part of
 $J(x)$.
 Define \[\alpha=\min_{x\in B_r(x^\ast)}\lambda_{d}\big(
 S(x)^TS(x) \big)\]
and \[\beta=\max_{x\in
B_r(x^\ast)}\lambda_{1}(J(x)^TJ(x))\] where $B_r(x^\ast)$ is a $r$--radius
    ball around $x^\ast$. 
     For a stable differential Nash $x^\ast$, let $B_r(x^\ast)$ be a ball of
radius $r>0$
around the equilibrium $x^\ast$ that is contained in the region of attraction
$\mc{V}(x^\ast)$ 
for $x^\ast$\footnote{Many techniques exists for approximating the region of
attraction; e.g., given a Lyapunov function, its
largest invariant level set can be used
as an approximation~\cite{sastry:1999aa}.  Since
$\mathrm{spec}(J(x^\ast))\subset \mb{C}_\circ^+$, the converse Lyapunov theorem guarantees the existence of a
local Lyapunov function.}.
Let $B_{r_0}(x^\ast)$ with $0<r_0<\infty$ be the \emph{largest ball} contained in the
region of attraction of $x^\ast$.
\subsection{Uniform Learning Rates}
\label{subsec:uniform_deterministic}
With $\gamma_i=\gamma$ for each $i\in \mc{I}$, 
the  learning rule \eqref{eq:det} can be thought of as a discretized numerical scheme
 approximating the continuous time dynamics \[\dot{x}=-\omega(x).\]
With a judicious choice of learning rate $\gamma$,
 \eqref{eq:det} will converge (at an exponential rate) to a locally stable equilibrium of the dynamics.

\begin{proposition}
    Consider an $n$--player continuous game $(f_1, \ldots, f_n)$ satisfying
    Assumption~\ref{ass:fass}. 
    Let $x^\ast\in X$ be a stable differential Nash equilibrium. Suppose agents
    use the gradient-based learning rule $x_{k+1}=x_k-\gamma \omega(x_k)$
    with learning rates  $0<\gamma<\tilde{\gamma}$ where $\tilde{\gamma}$ is the
  smallest positive $h$
    such that
    $\max_j|1-h\lambda_j(J(x^\ast))|=1$.
     Then, 
     for $x_0\in B_r(x^\ast)\subset \mc{V}(x^\ast)$, $x_k\rar x^\ast$
     exponentially.
    \label{thm:convergenceNonoise}
\end{proposition}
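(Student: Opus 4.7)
The plan is to reduce the statement to a linearization argument for the discrete map $F(x) = x - \gamma\omega(x)$. First I would observe that $x^\ast$ is a fixed point of $F$ because $\omega(x^\ast) = 0$ at any differential Nash, and by Assumption~\ref{ass:fass} the map $F$ is $C^1$ with Jacobian $DF(x^\ast) = I - \gamma J(x^\ast)$. By the spectral mapping theorem,
\[
\spec(DF(x^\ast)) = \{1 - \gamma\mu : \mu \in \spec(J(x^\ast))\},
\]
so the spectral radius of $DF(x^\ast)$ equals $\max_j |1 - \gamma \lambda_j(J(x^\ast))|$.

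Next I would verify that this spectral radius is strictly below $1$ for every $\gamma \in (0,\tilde\gamma)$. Write $\lambda_j = a_j + i b_j$ with $a_j > 0$, which holds because $x^\ast$ is a \emph{stable} differential Nash and hence $\spec(J(x^\ast)) \subset \mathbb{C}_+^\circ$. Then $|1 - h\lambda_j|^2 = 1 - 2h a_j + h^2(a_j^2 + b_j^2)$, which is a downward-opening parabola in $h$ starting at $1$ with negative slope $-2a_j$. So for each $j$ there is an interval $(0, h_j^\ast)$ on which $|1 - h\lambda_j| < 1$, with $h_j^\ast = 2a_j/(a_j^2 + b_j^2)$ being the first positive crossing of $1$. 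By definition, $\tilde\gamma = \min_j h_j^\ast$, and continuity of the maximum of finitely many continuous functions gives $\max_j |1 - \gamma\lambda_j(J(x^\ast))| < 1$ for all $\gamma \in (0,\tilde\gamma)$.

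Having established $\rho(DF(x^\ast)) < 1$, I would invoke the standard Ostrowski/linearization theorem for fixed points of smooth maps: there exist a neighborhood $U$ of $x^\ast$, a constant $C>0$, and a rate $\rho \in (\rho(DF(x^\ast)),1)$ such that $\|F^k(x_0) - x^\ast\| \leq C \rho^k \|x_0 - x^\ast\|$ for every $x_0 \in U$. Since $B_r(x^\ast) \subset \mc{V}(x^\ast)$ by hypothesis, all iterates $x_k$ are well-defined and remain in a bounded neighborhood of $x^\ast$; after finitely many steps they enter $U$, and the linearization bound from that point onward yields exponential convergence. Shrinking $r$ if necessary so that $B_r(x^\ast) \subseteq U$ directly from the start gives the stated rate without the intermediate entry argument.

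The only delicate point is ensuring that the continuous-time region of attraction $\mc{V}(x^\ast)$ (referenced from the hypothesis via the converse Lyapunov theorem) actually serves as a region of attraction for the discrete map $F$ at step size $\gamma$. This can be handled by noting that a local Lyapunov function $V$ for $-\omega$ satisfies $\nabla V(x)^T \omega(x) > 0$ on $\mc{V}(x^\ast) \setminus \{x^\ast\}$, and a Taylor expansion of $V \circ F$ yields $V(F(x)) - V(x) = -\gamma \nabla V(x)^T \omega(x) + O(\gamma^2)$ on compact sublevel sets; for $\gamma$ small relative to the Lipschitz and curvature constants on $B_r$, the sublevel sets of $V$ inside $B_r$ remain forward-invariant under $F$, so the iterates cannot escape. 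This is the main technical obstacle; everything else is the standard contraction-near-a-hyperbolic-attracting-fixed-point calculation.
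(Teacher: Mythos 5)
Your proof is correct and takes essentially the same route as the paper, which states this proposition without a written-out proof and attributes the analogous non-uniform result to Ostrowski's linearization theorem --- precisely the spectral-radius argument $\rho(I-\gamma J(x^\ast))<1$ that you develop, including the explicit interval $(0, h_j^\ast)$ with $h_j^\ast = 2a_j/(a_j^2+b_j^2)$ and the caveat about the discrete versus continuous region of attraction. One cosmetic slip: $|1-h\lambda_j|^2 = 1 - 2ha_j + h^2|\lambda_j|^2$ is an upward-opening (convex) parabola in $h$, not downward-opening, which is in fact what allows it to return to the value $1$ at $h_j^\ast$ as your own formula requires.
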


The above result provides a range for the possible learning rates for which
\eqref{eq:det} converges to a stable differential Nash equilibrium $x^\ast$ of $(f_1,
\ldots, f_n)$ assuming agents initialize in a ball contained in the region of
attraction of $x^\ast$. 
Note that the usual assumption in gradient-based approaches to single-objective
optimization problems (in which case $J$ is symmetric) is that $\gamma<1/L$, where objective being minimized is $L$-Lipschitz. This is
sufficient to guarantee convergence since
the spectral radius of a matrix is always less than any operator norm which, in
turn,
ensures that $|1-\gamma\lambda_j|<1$ for each $\lambda_j\in
\spec(J(x^\ast))$. 
 If the game is a potential game---i.e.,
 there exists a function $\phi$ such that $D_if_i=D_i\phi$ for each $i$ which
 occurs if and only if $D_{ij}f_i=D_{ji}f_j$---then convergence analysis
 coincides with gradient descent so that any $\gamma<1/L$ where $L$ is the
 Lipschitz constant of $\omega$ results in local asymptotic convergence.

The convergence guarantee in Proposition~\ref{thm:convergenceNonoise} is
asymptotic in nature. It is often useful, from both an analysis and synthesis
perspective, to have non-asymptotic or finite-time convergence results. Such
results can be used to provide guarantees on decision-making processes wrapped
around the coupled learning processes of the otherwise autonomous agents. The next result,
provides a finite-time convergence guarantee for gradient-based learning where
 agents uniformly use a fixed step size. 

Let $B_r(x^\ast)$ be defined as before with the added condition that it be
 defined to be the largest ball in the region of attraction such that on $B_r(x^\ast)$ the symmetric part of
$J$---i.e.,
$S\equiv\frac{1}{2}(J+J^T)$---is positive definite. 
\begin{theorem}
    Consider a game $(f_1, \ldots, f_n)$ on $X=X_1\times\cdots\times X_n$
    satisfying Assumption~\ref{ass:fass}.
    Let $x^\ast\in X$ be a stable differential Nash
    equilibrium. Suppose $x_0\in B_r(x^\ast)$ and that $\alpha<\beta$. Then,
    given $\vep>0$, the gradient-based learning
    dynamics with learning rate 
    $\gamma= \sqrt{\alpha}/\beta$ 
    obtains an $\varepsilon$--differential Nash such
    that $x_k\in B_\vep(x^\ast)\subset B_r(x^\ast)$ for all \[k \geq \left\lceil
    2\frac{\beta}{\alpha}\log \frac{r}{\varepsilon}\right\rceil. \]
    \label{thm:2player}
\end{theorem}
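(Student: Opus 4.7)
The plan is to set $V_k = \tfrac{1}{2}\lVert x_k - x^\ast\rVert^2$ and show that one gradient step strictly contracts this Lyapunov function by a factor $(1-\alpha/\beta)$ whenever the iterate lies in $B_r(x^\ast)$. Since $x^\ast$ is a differential Nash, $\omega(x^\ast)=0$, so by the fundamental theorem of calculus I can write
\begin{equation*}
\omega(x_k) \;=\; \omega(x_k) - \omega(x^\ast) \;=\; \widehat{J}_k\,(x_k - x^\ast), \qquad \widehat{J}_k = \int_0^1 J\bigl(x^\ast + t(x_k - x^\ast)\bigr)\,dt.
\end{equation*}
Because $B_r(x^\ast)$ is convex, the integrand is evaluated only on $B_r(x^\ast)$, so the pointwise bounds defining $\alpha$ and $\beta$ transfer to $\widehat{J}_k$: writing $\widehat{S}_k = \tfrac12(\widehat{J}_k + \widehat{J}_k^T)$, I have $\widehat{S}_k \succeq \sqrt{\alpha}\,I$ (since $S(x)$ is symmetric and positive definite on $B_r(x^\ast)$, so $\lambda_d(S(x)^T S(x)) = \lambda_d(S(x))^2$ gives $S(x)\succeq\sqrt{\alpha}\,I$) and $\lVert \widehat{J}_k\rVert^2 \leq \beta$ by Jensen's inequality.

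With these two bounds in hand the core computation is routine: expand
\begin{equation*}
\lVert x_{k+1} - x^\ast\rVert^2 = \lVert x_k - x^\ast\rVert^2 - 2\gamma\,(x_k-x^\ast)^T \widehat{S}_k (x_k-x^\ast) + \gamma^2\,\lVert \widehat{J}_k(x_k-x^\ast)\rVert^2,
\end{equation*}
apply the two bounds, and substitute $\gamma=\sqrt{\alpha}/\beta$ to obtain $\lVert x_{k+1}-x^\ast\rVert^2 \leq \bigl(1 - \alpha/\beta\bigr)\lVert x_k - x^\ast\rVert^2$. Since $\alpha<\beta$ the contraction factor is in $(0,1)$, so iterates started in $B_r(x^\ast)$ stay in $B_r(x^\ast)$, which justifies reusing the same bounds inductively. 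Iterating from $x_0\in B_r(x^\ast)$ gives $\lVert x_k - x^\ast\rVert \leq (1-\alpha/\beta)^{k/2}\,r$, and the claimed iteration count follows from $\log(1-\alpha/\beta) \leq -\alpha/\beta$ after solving $(1-\alpha/\beta)^{k/2} r \leq \varepsilon$.

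The main technical point I expect to have to justify carefully is the transfer of the spectral bounds from pointwise evaluations of $J(x)$ to the averaged Jacobian $\widehat{J}_k$. The bound $\lVert\widehat{J}_k\rVert \leq \sqrt{\beta}$ is immediate from the triangle inequality for the integral, but $\widehat{S}_k \succeq \sqrt{\alpha}\,I$ requires noting that the symmetric part commutes with integration and that positive-definiteness is preserved under averaging along a line segment lying inside $B_r(x^\ast)$. The choice $\gamma=\sqrt{\alpha}/\beta$ is then seen to be exactly the minimizer of the quadratic $h\mapsto 1-2h\sqrt{\alpha}+h^2\beta$, yielding the optimal contraction factor; a parenthetical remark can indicate that this explains why the rate is naturally expressed as the ratio $\beta/\alpha$, analogous to a condition number of the symmetrized game Jacobian.
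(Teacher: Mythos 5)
Your proof is correct and follows essentially the same route as the paper: both express $\omega(x_k)-\omega(x^\ast)$ via the integral-form mean value theorem, bound the symmetric part below by $\sqrt{\alpha}$ and $\lambda_1(J^TJ)$ above by $\beta$, and arrive at the per-step contraction factor $1-2\gamma\sqrt{\alpha}+\gamma^2\beta = 1-\alpha/\beta$ at $\gamma=\sqrt{\alpha}/\beta$, followed by the same $1-\alpha/\beta < e^{-\alpha/\beta}$ step. The only cosmetic difference is that you expand the squared norm with the averaged Jacobian $\widehat{J}_k$ whereas the paper bounds $\sup_{x\in B_r(x^\ast)}\|I-\gamma J(x)\|_2$ pointwise via $(I-\gamma J)^T(I-\gamma J)\preceq(1-\alpha/\beta)I$; your explicit note that the contraction keeps iterates in $B_r(x^\ast)$ makes the inductive reuse of the bounds slightly more careful than the paper's presentation.
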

Before we proceed to the proof, let us remark on the assumption that
$\alpha<\beta$. First, $\alpha\leq \beta$ is always true; indeed, suppressing the
dependence on $x$,
\begin{align*}
   \lambda_{d}( S^TS )
    \textstyle\leq
    \lambda_{1}(S^TS)
    &\textstyle
\leq\sigma_{\max}(J)^2=\lambda_{1}(J^TJ)
\end{align*}
where $\sigma_{\max}(\cdot)$ denotes the largest singular value of its argument.
Thus, the  condition that $\alpha<\beta$ 
is
generally true; 
for equality to
hold, the symmetric part of $J(x)$ would have \emph{repeated} eigenvalues, 
which is not generic.
Hence, we include this assumption in Theorem~\ref{thm:2player}, but note that it is not restrictive and is fairly benign.

\begin{proof}[Proof of Theorem~\ref{thm:2player}]
  First, note that $\|x_{k+1}-x^\ast\|=\|\tg(x_k)-\tg(x^\ast)\|$ where
        $\tg(x)=x-\gamma
    \omega(x)$. 
    Now, given $x_0\in B_r(x^\ast)$, by the mean value theorem,
    \[\textstyle\|\tg(x_0)-\tg(x^\ast)\|=\|\int_0^1 D\tg(\tau
    x_0+(1-\tau)x^\ast)(x_0-x^\ast)d\tau\|\leq \sup_{x\in
    B_r(x^\ast)}\|D\tg(x)\|\|x_0-x^\ast\|.\]
    Hence, it suffices to show that for the choice of $\gamma$, the eigenvalues of
$I-\gamma J(x)$
are in the unit circle.
Indeed, since $\omega(x^\ast)=0$, we have that
\begin{align*}
    \textstyle\|x_{k+1}-x^\ast\|_2&=\|x_k-x^\ast-\gamma(\omega(x_k)-\omega(x^\ast))\|_2  \textstyle\leq
    \sup_{x\in B_r(x^\ast)}\|I-\gamma J(x)\|_2\|x_k-x^\ast\|_2\end{align*}
If 
$\sup_{x\in B_r(x^\ast)}\|I-\gamma J(x)\|_2$ is less than one, then the
dynamics are contracting.
For notational convenience, we drop the explicit dependence on $x$.
Since $\lambda_d(S)\geq \sqrt{\alpha}$ on $B_r(x^\ast)$, \begin{align*}
     ( I -\gamma \Jac)^T &(I -\gamma J) 
     \leq (1-2\gamma \lambda_{d}(S)+\gamma^2\lambda_{1}(\Jac^T\Jac))I
     \leq\textstyle( 1-\frac{\alpha}{\beta})I
 \end{align*}
where the last inequality holds for  $\gamma=\sqrt{\alpha}/{\beta}$.
    Hence,
\begin{align*}
  \textstyle  \|x_{k+1}-x^\ast\|_2&  \textstyle \leq \sup_{x\in B_r(x^\ast)}\|I-\gamma
\Jac(x)\|_2\|x_k-x^\ast\|_2\leq  \textstyle 
    (1-\frac{\alpha}{\beta})^{1/2}\|x_k-x^\ast\|_2.
\end{align*}
Since $\alpha<\beta$, we have that $(1-\alpha/\beta)<
\exp(-\alpha/\beta)$ so that
    \[\|x_{T}-x^\ast\|_2\leq
    \exp(-T\alpha/(2\beta))\|x_0-x^\ast\|_2.\]
This, in turn, implies that $x_k\in B_\vep(x^\ast)$ for all $k\geq T=
\lceil2\frac{\beta}{\alpha} \log(r/\varepsilon)\rceil$.
\end{proof}

Note that $\gamma=\sqrt{\alpha}/{\beta}$ is selected to minimize $1-2\gamma
\lambda_{1}(S)+\gamma^2\lambda_{d}(\Jac^T\Jac)$.
Hence, this is the fastest learning rate given the worst case 
eigenstructure of $\Jac$ over
the ball $B_r(x^\ast)$ for the choice of operator norm $\|\cdot\|_2$. We note, 
however, that faster convergence is possible as indicated by
Proposition~\ref{thm:convergenceNonoise} and observed in the examples in
Section~\ref{sec:examples}. Indeed, we note that the spectral
radius $\rho(\cdot)$ of a matrix is always less than its maximum singular
value---i.e.~$\rho(I-\gamma\Jac)\leq \|I-\gamma \Jac\|_2$---so it
is possible to contract at a faster rate.
We remark that if $\Jac$ was symmetric (i.e.,~in the case
of a potential game~\cite{monderer:1996aa} or a single-agent optimization problem), then
$\rho(I-\gamma \Jac)=\|I-\gamma \Jac\|_2$.
In games, however,
$\Jac$ is not symmetric.

\subsection{Non-Uniform Learning Rates}
Let us now consider the case when agents have their own individual learning rate
$\gamma_i$, yet still have oracle access to their individual gradients. This is, of course, more natural in the study of autonomous learning
agents as opposed to efforts for computing Nash equilibria for a given game. 

\label{subsec:deterministic_asymptotic}
 \begin{proposition}
    Consider an $n$--player game $(f_1, \ldots, f_n)$ satisfying
    Assumption~\ref{ass:fass}. 
    Let $x^\ast\in X$ be a stable differential Nash equilibrium. Suppose agents
    use the gradient-based learning rule $x_{k+1}=x_k-\Gamma \omega(x_k)$
    with learning rates  $\gamma_i$ such that $\rho(I-\Gamma \Jac(x))<1$
    for all $x\in \mc{V}(x^\ast)$. 
     Then, 
     for $x_0\in \mc{V}(x^\ast)$, $x_k\rar x^\ast$ exponentially.
    \label{thm:convergenceNonoise_nonuniform}
\end{proposition}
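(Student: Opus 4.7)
The plan is to adapt the linearization-plus-contraction argument of Theorem~\ref{thm:2player} to the non-uniform setting. The key issue, and the main obstacle, is that the hypothesis is phrased in terms of the spectral radius $\rho(I - \Gamma J(x))$ rather than an operator norm, so a direct Euclidean-norm contraction argument fails; the analysis must pass to a norm adapted to the Jacobian of the iteration at $x^\ast$.

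First, I would observe that $x^\ast$ is a fixed point of $\tilde g(x) = x - \Gamma\omega(x)$ because $\omega(x^\ast)=0$, and that $D\tilde g(x^\ast) = I - \Gamma J(x^\ast)$ has spectral radius strictly less than one by hypothesis. I would then invoke the standard fact from linear algebra that for any matrix $A$ with $\rho(A)<1$ the discrete Lyapunov equation $A^{\top} P A - P = -Q$ admits a unique solution $P\succ 0$ for any chosen $Q\succ 0$, and the induced quadratic norm $\|v\|_P = (v^{\top} P v)^{1/2}$ satisfies $\|A\|_P < 1$. Applying this to $A = I - \Gamma J(x^\ast)$ yields a norm $\|\cdot\|_P$ and a constant $c_0 < 1$ in which $I - \Gamma J(x^\ast)$ is a strict contraction.

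By continuity of $J$ (Assumption~\ref{ass:fass}), the bound $\|I - \Gamma J(x)\|_P \leq c < 1$ persists on some ball $B_\delta(x^\ast) \subset \mc{V}(x^\ast)$. Then, exactly as in the proof of Theorem~\ref{thm:2player}, the mean value theorem gives
\[\|\tilde g(x) - x^\ast\|_P = \Big\|\int_0^1 \bigl(I - \Gamma J(\tau x + (1-\tau)x^\ast)\bigr)(x - x^\ast)\,d\tau\Big\|_P \leq c\,\|x - x^\ast\|_P,\]
which iterates to $\|x_k - x^\ast\|_P \leq c^k \|x_0 - x^\ast\|_P$, proving exponential convergence from any initialization in $B_\delta(x^\ast)$.

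The hardest step is promoting this local bound to one valid on all of $\mc{V}(x^\ast)$, since the norm $\|\cdot\|_P$ is tailored to $J(x^\ast)$ and need not witness contraction elsewhere. I would attempt this by invoking the converse Lyapunov theorem for the continuous flow $\dot x = -\omega(x)$ (referenced in the footnote following the definition of $B_r(x^\ast)$): exponential stability of $x^\ast$ under the flow yields a smooth Lyapunov function $V$ on $\mc{V}(x^\ast)$ whose sublevel sets exhaust $\mc{V}(x^\ast)$. The pointwise hypothesis $\rho(I - \Gamma J(x)) < 1$ would then be used to argue that $V$ descends along the discrete iteration, i.e.~$V(x_{k+1}) < V(x_k)$ for $x_k \in \mc{V}(x^\ast) \setminus \{x^\ast\}$, forcing the trajectory into $B_\delta(x^\ast)$ in finitely many steps, after which the local contraction in $\|\cdot\|_P$ takes over and delivers exponential convergence to $x^\ast$. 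This final step is the most delicate and will likely require either a more refined pointwise condition or a small-step-size argument relating the discrete iteration to an Euler approximation of the continuous flow.
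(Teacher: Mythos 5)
Your local argument is correct and is, in substance, the paper's own proof: the paper disposes of this proposition by citing Ostrowski's theorem, whose standard proof is exactly the adapted-norm linearization you describe. The only cosmetic difference is how the adapted norm is built --- you solve the discrete Lyapunov equation $A^T P A - P = -Q$ at $A = I - \Gamma J(x^\ast)$ to obtain a quadratic norm with $\|A\|_P < 1$, whereas the paper's appendix (in the remark following the proof of Proposition~\ref{prop:nonuniformone}) invokes the Ortega--Rheinboldt fact that $\rho(A)<1$ yields, for any $\epsilon>0$, an operator norm with $\|A\|\leq \rho(A)+\epsilon<1$. Both constructions give the same thing: a ball $B_\delta(x^\ast)$ on which $\tilde{g}(x)=x-\Gamma\omega(x)$ is a strict contraction in the adapted norm, and then the mean-value-theorem step you share with the proof of Theorem~\ref{thm:2player} delivers exponential convergence.

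The difficulty you flag in your final paragraph --- promoting the contraction from $B_\delta(x^\ast)$ to all of $\mc{V}(x^\ast)$ --- is genuine: the pointwise condition $\rho(I-\Gamma J(x))<1$ is not submultiplicative across products of different matrices, and a single $P$ fitted to $J(x^\ast)$ need not certify descent elsewhere in $\mc{V}(x^\ast)$; likewise, $\rho(I-\Gamma J(x))<1$ does not by itself force a converse Lyapunov function of the continuous flow $\dot{x}=-\omega(x)$ to decrease along the discrete iterates. But you should be aware that the paper does not close this gap either: Ostrowski's theorem is purely local and guarantees convergence only from \emph{some} neighborhood of $x^\ast$, and the advertised ``simple proof via Lyapunov argument'' does not appear in the appendix. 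So your steps one through four reproduce everything the paper actually establishes, and your honest assessment of the last step --- that the global claim on $\mc{V}(x^\ast)$ needs either a stronger pointwise hypothesis (e.g., a uniform norm bound as in Proposition~\ref{prop:nonuniformone}) or a small-step Euler-approximation argument --- is accurate rather than a defect of your proof relative to the paper's.
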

The proof is a direct application of Ostrowski's
theorem~\cite{ostrowski:1966aa}. We provide a simple proof via Lyapunov argument
for posterity.

\citet{mazumdar:2018aa} show that
\eqref{eq:det} will almost surely avoid strict saddle points of the dynamics,
some of which are Nash equilibria in non-zero sum games. Note that the set of
critical points $\mc{C}$ contains
more than just the local Nash equilibria. Hence, except on a set of measure
zero, \eqref{eq:det} will converge to a stable attractor of $\dot{x}=-\omega(x)$
which includes stable limit cycles and stable local non-Nash critical points.

\label{subsec:deterministic_finitesample}

Letting $\tg(x)=x-\Gamma \omega(x)$, since $\omega\in C^q$ for some $q\geq 1$,
$\tg\in C^q$, the expansion
\[\tg(x)=\tg(x^\ast)+(I-\Gamma \Jac(x))(x-x^\ast)+R(x-x^\ast)\]
holds,
where $R$ satisfies
    $\lim_{x\rar x^\ast}\|R(x-x^\ast)\|/\|x-x^\ast\|=0$
so  that given $c>0$, there exists an $r>0$ such that
$\|R(x-x^\ast)\|\leq c\|x-x^\ast\|$ for all $x\in B_{r}(x^\ast)$.
\begin{proposition}
    Suppose that $\|I-\Gamma \Jac(x)\|<1$ for all $x\in B_{r_0}(x^\ast)\subset
    \mc{V}(x^\ast)$ so that there exists $r',r''$ such that $\|I-\Gamma
    \Jac(x)\|\leq r'<r''<1$ for all $x\in B_{r_0}(x^\ast)$. For $ 1-r''>0$, let $0<r<\infty$ be the
    largest $r$ such that
    $\|R(x-x^\ast)\|\leq (1-r'')\|x-x^\ast\|$ for all $x\in B_r(x^\ast)$.
    Furthermore, let $x_0\in B_{r^\ast}(x^\ast)$, where $r^\ast=\min\{r,r_0\}$, be
    arbitrary. Then, given $\vep>0$, gradient-based learning with learning rates
    $\Gamma$ obtains an $\vep$--differential Nash equilibrium in finite
    time---i.e., $x_k\in
    B_\vep(x^\ast)$ for all $k\geq T=\lceil \frac{1}{\delta}\log\left(
    r^\ast/\vep
    \right)\rceil$ where $\delta = r''-r'$.
    \label{prop:nonuniformone}
\end{proposition}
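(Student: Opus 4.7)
The plan is to convert the two hypotheses into a single per-step contraction on $B_{r^\ast}(x^\ast)$ and then iterate it. First I would use the fact that $x^\ast$ is a fixed point of $\tg$: since $\omega(x^\ast)=0$, we have $\tg(x^\ast)=x^\ast$, and the update rule gives $x_{k+1}-x^\ast = \tg(x_k)-\tg(x^\ast)$. Substituting the stated expansion $\tg(x)=\tg(x^\ast)+(I-\Gamma J(x))(x-x^\ast)+R(x-x^\ast)$ with $x=x_k$ and applying the triangle inequality yields
\[\|x_{k+1}-x^\ast\| \leq \|I-\Gamma J(x_k)\|\,\|x_k-x^\ast\| + \|R(x_k-x^\ast)\|.\]

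Next, on $B_{r^\ast}(x^\ast)\subseteq B_{r_0}(x^\ast)\cap B_r(x^\ast)$, the two given estimates apply simultaneously: the operator-norm hypothesis gives $\|I-\Gamma J(x_k)\|\leq r'$, while the definition of $r$ gives $\|R(x_k-x^\ast)\|\leq (1-r'')\|x_k-x^\ast\|$. Adding these produces the one-step contraction
\[\|x_{k+1}-x^\ast\| \leq \bigl(r' + (1-r'')\bigr)\|x_k-x^\ast\| = (1-\delta)\|x_k-x^\ast\|, \qquad \delta = r''-r' > 0.\]
Because $1-\delta<1$, a straightforward induction shows $x_k\in B_{r^\ast}(x^\ast)$ for every $k\geq 0$, so the contraction is valid at every step. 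Unrolling gives $\|x_k-x^\ast\|\leq (1-\delta)^k r^\ast$, and the elementary inequality $1-\delta\leq e^{-\delta}$ yields $\|x_k-x^\ast\|\leq e^{-\delta k} r^\ast$. Solving $e^{-\delta k}r^\ast\leq \varepsilon$ for $k$ gives the claimed finite-time bound $k\geq \lceil \delta^{-1}\log(r^\ast/\varepsilon)\rceil$.

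There is no real obstacle in the argument; the only bookkeeping point that requires care is ensuring both estimates remain valid along the entire orbit, which is precisely why one must (i) restrict the initial condition to the intersection ball $B_{r^\ast}(x^\ast)$ with $r^\ast=\min\{r,r_0\}$, and (ii) verify that the strict contraction keeps every subsequent iterate inside this ball. The rest is arithmetic: splitting the contraction factor into the linear part $r'$ and the absorbed higher-order part $1-r''$, and using $1-\delta\leq e^{-\delta}$ to pass from a geometric rate to the logarithmic iteration complexity.
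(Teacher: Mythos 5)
Your proposal is correct and follows essentially the same route as the paper's proof: both combine the operator-norm bound $\|I-\Gamma J(x)\|\leq r'$ with the remainder bound $\|R(x-x^\ast)\|\leq(1-r'')\|x-x^\ast\|$ on $B_{r^\ast}(x^\ast)$ to obtain the one-step contraction factor $1-\delta$, then iterate and apply $1-\delta<e^{-\delta}$ to get the iteration count. Your explicit inductive check that the iterates remain in $B_{r^\ast}(x^\ast)$ is a point the paper leaves implicit ("applying the result iteratively"), but it is the same argument.
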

The proof follows the proof of Theorem 1 in \cite{argyros:1999aa} with a few
minor modifications; we provide it
in Appendix~\ref{app:proofs_deterministic} for completeness. 
\begin{remark}
    We note that the proposition can be more generally stated with the
    assumption that $\rho(I-\Gamma \Jac(x))<1$, in which case there exists
    some $\delta$ defined in terms of bounds on powers of $I-\Gamma \Jac$. We
    provide the proof of this in Appendix~\ref{app:proofs_deterministic}. We also note that
    these results hold even if $\Gamma$ is not a diagonal matrix as we have
    assumed as long as
    $\rho(I-\Gamma \Jac(x))<1$. 
\end{remark}

A perhaps more interpretable finite bound stated in terms of the game structure can also be
obtained. Consider the case in which players adopt
learning rates $\gamma_i=\sqrt{\alpha}/(\beta k_i)$ with $k_i\geq 1$. Given a
stable differential Nash equilibrium $x^\ast$, let
$B_r(x^\ast)$ be the largest ball of radius $r$ contained in the region of attraction on
which $\til{S}\equiv\frac{1}{2}(\til{\Jac}^T+\til{\Jac})$ is positive definite
where
$\til{\omega}=(D_if_i/k_i)_{i\in \mc{I}}$ so that $\til{\Jac}\equiv D\til{\omega}$, and define
\begin{align*}
    \til{\alpha}=\textstyle\min_{x\in B_r(x^\ast)}\lambda_{d}\big(
    \til{S}(x)^T\til{S}(x) \big)
\end{align*}
and 
\begin{equation*}
    \textstyle  \til{\beta}=\max_{x\in
  B_r(x^\ast)}\lambda_{1}(\til{J}(x)^T\til{J}(x)).
\end{equation*}
Given a stable differential Nash equilibrium $x^\ast$, let $B_r(x^\ast)$ be the
largest ball contained in the region of attraction $\mc{V}(x^\ast)$ on which
$S^TS$ is positive definite---i.e., 
$\sqrt{\alpha}>0$.
\begin{theorem}
  Suppose that Assumption~\ref{ass:fass} holds and that
    $x^\ast\in X$ is a stable differential Nash
    equilibrium. Let $x_0\in B_r(x^\ast)$, $\alpha<k_{\min}\beta$,
    $\sqrt{\alpha}/k_{\min}\leq \sqrt{\til{\alpha}}$, 
    and for each $i$, $\gamma_i=\sqrt{\alpha}/(\beta k_i)$ with $k_i\geq 1$.
Then,
    given $\vep>0$, 
    the gradient-based learning
    dynamics with learning rates 
    $\gamma_i$ 
    obtain an $\varepsilon$--differential Nash such
    that $x_k\in B_\vep(x^\ast)$ for all 
    \[k\geq   \left\lceil
    2\frac{\beta k_{\min}}{{\alpha}}\log\left(\frac{r}{\vep}\right)\right\rceil.\]
    \label{thm:multiplerates}
\end{theorem}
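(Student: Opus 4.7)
The plan is to reduce Theorem~\ref{thm:multiplerates} to the argument already used for Theorem~\ref{thm:2player} by rewriting the non-uniform iteration as a uniform step on a rescaled vector field. Let $K=\mathrm{blockdiag}(k_1I_{d_1},\ldots,k_nI_{d_n})$ so that $\Gamma=(\sqrt{\alpha}/\beta)K^{-1}$. By definition $\til{\omega}(x)=K^{-1}\omega(x)$, so the update becomes $x_{k+1}=x_k-(\sqrt{\alpha}/\beta)\til{\omega}(x_k)$, with Jacobian $\til{J}=K^{-1}J$. Since $\til{\omega}(x^\ast)=0$, the mean value step from the proof of Theorem~\ref{thm:2player} gives
\[
\|x_{k+1}-x^\ast\|_2 \leq \sup_{x\in B_r(x^\ast)}\|I-(\sqrt{\alpha}/\beta)\til{J}(x)\|_2\,\|x_k-x^\ast\|_2,
\]
so it suffices to bound the operator norm by a contraction factor strictly less than one.

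Next I would expand $(I-\gamma\til{J})^T(I-\gamma\til{J})\preceq (1-2\gamma\lambda_d(\til{S})+\gamma^2\lambda_1(\til{J}^T\til{J}))I$ for $\gamma=\sqrt{\alpha}/\beta$ exactly as before, and use $\lambda_d(\til{S})\geq\sqrt{\til{\alpha}}$ and $\lambda_1(\til{J}^T\til{J})\leq\til{\beta}$ on $B_r(x^\ast)$ to get the intermediate bound $1-2\sqrt{\alpha\til{\alpha}}/\beta+\alpha\til{\beta}/\beta^2$. The main work is then to convert this back to the stated quantities. For the linear term I invoke the hypothesis $\sqrt{\til{\alpha}}\geq\sqrt{\alpha}/k_{\min}$ to get $2\sqrt{\alpha\til{\alpha}}/\beta\geq 2\alpha/(k_{\min}\beta)$. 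For the quadratic term I use the spectral relation $\til{J}^T\til{J}=J^TK^{-2}J\preceq (1/k_{\min}^2)J^TJ$, which follows since $K^{-2}\preceq k_{\min}^{-2}I$; this gives $\til{\beta}\leq \beta/k_{\min}^2$, and hence $\alpha\til{\beta}/\beta^2\leq \alpha/(k_{\min}^2\beta)$. Because $k_{\min}\geq 1$, combining yields
\[
1-\tfrac{2\alpha}{k_{\min}\beta}+\tfrac{\alpha}{k_{\min}^2\beta} \leq 1-\tfrac{\alpha}{k_{\min}\beta}.
\]

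Finally, the assumption $\alpha<k_{\min}\beta$ makes this bound lie in $(0,1)$, so $\|x_{k+1}-x^\ast\|_2 \leq (1-\alpha/(k_{\min}\beta))^{1/2}\|x_k-x^\ast\|_2 \leq \exp(-\alpha/(2k_{\min}\beta))\|x_k-x^\ast\|_2$, and iterating $T$ times then setting the right-hand side equal to $\vep$ produces the iteration count $k\geq\lceil 2(k_{\min}\beta/\alpha)\log(r/\vep)\rceil$. The main (though mild) obstacle is the bookkeeping that translates the tilde-quantities arising from the contraction estimate back into the $\alpha,\beta,k_{\min}$ bound advertised in the theorem; this hinges on both the given hypothesis $\sqrt{\alpha}/k_{\min}\leq\sqrt{\til{\alpha}}$ and the monotone spectral relation between $\til{J}^T\til{J}$ and $J^TJ$. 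The positivity of $\til{S}$ on $B_r(x^\ast)$—needed for $\sqrt{\til{\alpha}}>0$—is implicit in the way $B_r(x^\ast)$ is selected just before the theorem.
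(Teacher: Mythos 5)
Your proposal is correct and follows essentially the same route as the paper's proof: rewrite the update as a uniform step $\gamma=\sqrt{\alpha}/\beta$ on the rescaled field $\til{\omega}=\Lambda\omega$, expand $(I-\gamma\til{J})^T(I-\gamma\til{J})$, lower-bound the linear term via $\lambda_d(\til{S})\geq\sqrt{\til{\alpha}}\geq\sqrt{\alpha}/k_{\min}$, upper-bound the quadratic term by $\lambda_1(J^TJ)/k_{\min}^2$, and use $k_{\min}\geq 1$ and $\alpha<k_{\min}\beta$ to obtain the contraction factor $(1-\alpha/(\beta k_{\min}))^{1/2}$. The only cosmetic difference is that you justify $\lambda_1(J^T\Lambda^2J)\leq\lambda_1(J^TJ)/k_{\min}^2$ via the Loewner ordering $\Lambda^2\preceq k_{\min}^{-2}I$ rather than the paper's operator-norm submultiplicativity argument; both are valid.
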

\begin{proof}
        First, note that $\|x_{k+1}-x^\ast\|=\|\tg(x_k)-\tg(x^\ast)\|$ where
        $\tg(x)=x-\Gamma
    \omega(x)$. 
    Now, given $x_0\in B_r(x^\ast)$, by the mean value theorem,
    \[\textstyle\|\tg(x_0)-\tg(x^\ast)\|=\|\int_0^1 D\tg(\tau
    x_0+(1-\tau)x^\ast)(x_0-x^\ast)d\tau\|\leq \sup_{x\in
    B_r(x^\ast)}\|D\tg(x)\|\|x_0-x^\ast\|.
\]
Hence, it suffices to show that for the choice of $\Gamma$, the eigenvalues of
$I-\Gamma J(x)$
live in the unit circle. Then an inductive argument can be made with the
inductive hypothesis that $x_k\in B_r(x^\ast)$. 
Let $\Lambda=\diag\left( 1/k_1, \ldots, 1/k_n \right)$. Then we need to show
that $I-\gamma \Lambda J$ has eigenvalues in the unit circle. 
Since $\omega(x^\ast)=0$, we have that
\[\|x_{k+1}-x^\ast\|_2=\|x_k-x^\ast-\gamma\Lambda(\omega(x_k)-\omega(x^\ast))\|_2
\leq \textstyle\sup_{x\in B_r(x^\ast)}\|I-\gamma \Lambda J(x)\|_2\|x_k-x^\ast\|_2.\]
If 
$\sup_{x\in B_r(x^\ast)}\|I-\gamma \Lambda J(x)\|_2$ is less than one,
where the norm is the operator $2$--norm, then the
dynamics are contracting.
For notational convenience, we drop the explicit dependence on $x$.
Then,
    \begin{align}
        (I-\gamma \Lambda J)^T(I-\gamma \Lambda J)\leq \textstyle(1-2\gamma
        \lambda_d(\til{S})+\frac{\gamma^2\lambda_1(J^TJ)}{k_{\min}^2})I\textstyle&\leq
        (1-2\gamma\sqrt{\alpha}/k_{\min}+\alpha/(\beta
            k_{\min}))I\notag\\
            &\textstyle =(1-\alpha/(\beta
            k_{\min}))I\notag.
    \end{align}
The first inequality holds since
    $\lambda_1(J^TJ/k_{\min}^2)\geq
    \lambda_1(J^T\Lambda^2J)$. Indeed, first observe that the
    singular values of $\Lambda J^TJ \Lambda$ are the same as those
    of $J^T\Lambda^2J$ since the latter is positive definite
    symmetric. Thus, by noting that $\|A\|_2=\sigma_{\max}(A)$ and employing
    Cauchy-Schwartz, we get that
    $\|\Lambda\|_2^2\|J^TJ\|_2\geq \|\Lambda
    J^TJ\Lambda\|_2$ and hence, the inequality. 
  Using the above to bound $\sup_{x\in B_r(x^\ast)}\|I-\gamma\Lambda
    J(x)\|_2$, we have
  $\|x_{k+1}-x^\ast\|_2\leq  \textstyle 
    (1-\frac{\alpha}{\beta k_{\min}})^{1/2}\|x_k-x^\ast\|_2$.
    Since $\alpha<k_{\min}\beta$, $(1-\alpha/(\beta k_{\min}))<
    e^{-\alpha/(\beta k_{\min})}$ so that
    $\|x_{k+1}-x^\ast\|_2\leq
    e^{-T\alpha/(2k_{\min}\beta)}\|x_0-x^\ast\|_2$.
This, in turn, implies that
 $x_k\in B_\vep(x^\ast)$ for all $k\geq T= \lceil2\frac{\beta k_{\min}}{\alpha} \log(r/\varepsilon)\rceil$.
\end{proof}

Multiple learning rates lead to a scaling rows which
can have a significant effect on the eigenstructure of the matrix,
thereby making the relationship between $\Gamma
J$ and $J$ difficult to reason about. None-the-less, there are numerous approaches to solving
nonlinear systems of equations (or differential equations expressed as a set of
nonlinear system of equations) that employ \emph{preconditioning} (i.e., coordinate scaling). The purpose of using a preconditioning matrix is to
rescale the problem and achieve better or faster convergence. Many of
these results directly translate to convergence guarantees for learning in games
when the learning rates are not uniform; however, in the case of understanding
convergence properties for autonomous agents learning an equilibrium---as
opposed to computing an equilibrium---the \emph{preconditioner} is not subject
to design. Perhaps this reveals an interesting direction of future research in
terms of synthesizing games or learning rules via incentivization or otherwise
exogenous control policies for either coordinating agents or improving the learning
process---e.g., using incentives to induce a particular equilibrium while
also encouraging faster learning.  
\section{Stochastic Setting}
\label{subsec:stochastic_fintesample}
In this section, we consider gradient-based learning rules for each agent where
the agent does not have oracle access to their individual gradients, but rather
has an unbiased estimator in its place. In particular, for each player
$i\in\mc{I}$,  consider the noisy gradient-based learning rule given by
\begin{equation}x_{i,k+1}=x_{i,k}-\gamma_{i,k}(\omega(x_k)+w_{i,k+1})\label{eq:stoch}\end{equation}
where $\gamma_{i,k}$ is the learning rate and $w_{i,k}$ is an independent identically
distributed stochastic process. In order to prove a high-probability, finite
sample convergence rate, we can leverage recent results for convergence of
nonlinear stochastic approximation algorithms. The key is in formulating the the
learning rule for the agents and in leveraging the notion of a stable differential Nash
equilibrium which has analogous properties as a locally stable equilibrium for
a nonlinear dynamical system. Making the link between the discrete time learning
update and the limiting continuous time differential equation and its equilibria
allows us to draw on rich existing convergence analysis tools.

In the first part of this section, we provide convergence rate results for the case where the agents use a
\emph{uniform learning rate}---i.e.~$\gamma_{i,k}\equiv \gamma_{k}$. In the
second part of this section, we extend these results to the case where agents
use \emph{non-uniform learning rates}---that is, each agent has its own learning 
rate $\gamma_{i,k}$---by incorporating some additional assumptions and
leveraging two-timescale analysis techniques from dynamical systems theory.

We require some modified assumptions in this section on the learning process
structure. 
\begin{assumption}
The gradient-based learning rule \eqref{eq:stoch} satisfies the following:
    \begin{enumerate}[itemsep=-2pt, topsep=0pt,
            label=\textbf{A2\alph*.}, leftmargin=35pt]
        \item  Given the filtration $\mc{F}_k=\sigma(x_s,w_{1,s}, w_{2,s}, s\leq k)$,
    $\{w_{i,k+1}\}_{i\in \mc{I}}$ are conditionally independent. Moreovoer, for
    each $i\in\mc{I}$,
    $\mb{E}[w_{i,k+1}|\ \mc{F}_k]=0$ almost surely (a.s.), and $\mb{E}[\|w_{i,k+1}\||\
    \mc{F}_{k}]\leq c_i(1+\|x_{k}\|)$ a.s.~for some constants
    $c_i\geq 0$.
\item   For each $i\in \mc{I}$, the stepsize sequence $\{\gamma_{i,k}\}_k$
    contain positive scalars
    such that
    \begin{enumerate}[topsep=0pt]
        \item $\sum_{i}\sum_{k}\gamma_{i,k}^2<\infty$;
        \item $\sum_k
    \gamma_{i,k}=\infty$;
\item and, $\gamma_{2,k}=o(\gamma_{1,k})$.   
\end{enumerate}
\item  Each $f_i\in C^q(\mb{R}^d,\mb{R})$ for some $q\geq 3$ and
    each $f_i$ and $\omega$ are $L_i$-- and $L_\omega$--Lipschitz,
    respectively.
       \end{enumerate}
    \label{ass:lip}
\end{assumption}

\subsection{Uniform Learning Rates}
\label{app:uniform}
Before concluding, we specialize to the case in which agents have
the same learning rate sequence $\gamma_{i,k}= \gamma_k$ for each $i\in
\mc{I}$.
\begin{theorem}
    Suppose that $x^\ast$ is a stable differential Nash equilibrium of the game
    $(f_1,\ldots, f_\np)$
    and that Assumption~\ref{ass:lip} holds (excluding A2b.iii). For each $\tind$,
    let $\no\geq 0$
    and 
    \[\textstyle\zeta_\tind=\max_{\no\leq s\leq \tind-1}\big( \exp(-\lambda
    \sum_{\ell=s+1}^{\tind-1}\gamma_\ell \big)\gamma_s.\]
    Fix any $\vep>0$ such that
    $B_{\vep}(x^\ast)\subset B_r(x^\ast)\subset \mc{V}$ where $\mc{V}$ is the
    region of attraction of $x^\ast$. There exists constants
$C_1, C_2>0$ and functions $h_1(\vep)=O(\log(1/\vep))$ and $h_2(\vep)=O(1/\vep)$
so that whenever $T\geq h_1(\vep)$ and $\no\geq N$, where $N$ is such that
$1/\gamma_\tind\geq h_2(\vep)$ for all $\tind\geq N$, the samples generated by the gradient-based learning rule
satisfy 
\begin{align*}
    &\textstyle\Pr\left( \bar{x}(t)\in B_\vep(x^\ast) \ \forall t\geq t_{\no}+T+1|\
    \bar{x}(t_{\no})\in B_r(x^\ast) \right)\\
    &\qquad\textstyle\geq 1-\sum_{s=\no}^\infty\big(
    C_1\exp(-C_2\vep^{1/2}/\gamma_s^{1/2})\textstyle+C_1\exp(-C_2\min\{\vep,
    \vep^2\}/\zeta_s)\big)
\end{align*}
where the constants depend only on parameters $\lambda, r, \tau_L$ and the
dimension $d=\sum_{i}d_i$. 
Then stochastic gradient-based learning in games obtains an
$\vep$--stable differential Nash $x^\ast$ in finite time with high probability.
    \label{thm:new}
\end{theorem}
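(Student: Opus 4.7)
The plan is to cast the noisy update \eqref{eq:stoch} as a nonlinear stochastic approximation scheme driving the ODE $\dot x=-\omega(x)$, and then invoke a concentration bound for such schemes in the spirit of Thoppe--Borkar. Writing the update as $x_{k+1}=x_k-\gamma_k\omega(x_k)-\gamma_k w_{k+1}$ with $\{w_{k+1}\}$ a martingale difference sequence with respect to the filtration $\mc F_k$ (by A2a), the first task is to certify the hypotheses needed by that framework. Since $x^\ast$ is a stable differential Nash, $\spec(J(x^\ast))\subset\mb C_+^\circ$, so the converse Lyapunov theorem yields a local quadratic Lyapunov function $V$ on $B_r(x^\ast)\subset\mc V(x^\ast)$ satisfying $\langle\nabla V(x),-\omega(x)\rangle\leq -\lambda V(x)$ and $\|\nabla V\|\leq L_V\|x-x^\ast\|$, where $\lambda$ is the rate used in the definition of $\zeta_\tind$.

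Next I would form the continuous-time interpolation $\bar x(t)$ of the iterates at the times $t_k=\sum_{\ell<k}\gamma_\ell$ and, starting from $t_{\no}$, compare $\bar x(t_\tind)$ against the ODE trajectory $\Phi_{t_\tind-t_{\no}}(x_{\no})$. Unrolling the discrete recursion after Taylor expanding $\omega$ around $x^\ast$ yields a linear contraction factor $\exp(-\lambda\sum_{\ell=s+1}^{\tind-1}\gamma_\ell)$ multiplying each noise increment $\gamma_s w_{s+1}$ (this is exactly what the definition of $\zeta_\tind$ captures) plus a higher-order discretization term of size $O(\gamma_s)$. Applying Gr\"onwall/Lyapunov arguments then bounds $\|x_\tind-x^\ast\|$ in terms of (i) an initial-condition decay term which is at most $\vep/2$ once $\tind-\no\geq h_1(\vep)=O(\log(1/\vep))$, (ii) a discretization bias of order $\gamma_\tind$ controlled by requiring $1/\gamma_\tind\geq h_2(\vep)=O(1/\vep)$, and (iii) the weighted martingale tail $M_\tind=\sum_{s=\no}^{\tind-1}\beta_{s,\tind}w_{s+1}$ with $\beta_{s,\tind}\lesssim\zeta_\tind/\gamma_s$ after absorbing $\gamma_s$.

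The heart of the argument is then a high-probability bound on $M_\tind$ that holds uniformly for all $\tind\geq \no$. For this I would use a Freedman/Azuma-type martingale concentration inequality applied to the weighted noise together with the linear growth bound $\mb E[\|w_{s+1}\||\mc F_s]\leq c(1+\|x_s\|)$ from A2a; conditioning on the event that the trajectory has not yet left $B_r(x^\ast)$ makes $\|x_s\|$ bounded, so one obtains sub-Gaussian tails of the form $\exp(-C_2\vep^2/\zeta_s)$ for each $s$, which combine into the $\min\{\vep,\vep^2\}$ term through a standard truncation giving the linear regime for large $\vep$. The additional $\exp(-C_2\vep^{1/2}/\gamma_s^{1/2})$ term arises when controlling the single-step excursion probability that a given iterate leaves $B_r(x^\ast)$ before the averaging/contraction kicks in, which gives a one-step Hoeffding-type bound with scale $\gamma_s^{1/2}$. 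A union bound over $s\geq\no$ delivers the stated high-probability statement, and invariance of $B_\vep(x^\ast)\subset B_r(x^\ast)$ is then promoted to an almost-sure-for-all-future-times statement by an inductive argument on the same event.

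The main obstacle is the treatment of the noise when $\omega$ is not a gradient: I cannot use a descent lemma on $\sum f_i$, so the Lyapunov function must come from the linearization $J(x^\ast)$ (via converse Lyapunov), and all contractions must be argued through this local $V$ rather than through any potential. The second delicate point is aligning the two concentration scales—$\gamma_s$ for the one-step escape and $\zeta_s$ for the cumulative martingale—so that their sum over $s\geq\no$ is finite; this is where the step-size requirement $1/\gamma_\tind\geq h_2(\vep)$ and square-summability from A2b are critical, and these constraints dictate the forms of $h_1$ and $h_2$.
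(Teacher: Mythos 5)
Your proposal is correct in its essentials but does substantially more work than the paper. The paper's entire proof is a verification-and-citation: it observes that a stable differential Nash equilibrium has $\spec(J(x^\ast))\subset \mb{C}_+^\circ$, hence $x^\ast$ is a locally asymptotically stable hyperbolic equilibrium of $\dot{x}=-\omega(x)$, and that together with the martingale-difference and step-size conditions of Assumption~\ref{ass:lip} this places the iteration squarely within the hypotheses of Theorem~1.1 of \citet{thoppe:2018aa}, whose conclusion is then quoted as the stated bound. You instead sketch the internals of that cited theorem: a converse-Lyapunov function built from the linearization, the interpolated trajectory compared against the ODE flow, a contraction term, a discretization bias, and a weighted martingale tail handled by Freedman/Azuma concentration plus a union bound over $s\geq \no$. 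That reconstruction is broadly faithful to how the cited result is actually proved (the paper's own Appendix~\ref{app:proofs_stocahstic} runs exactly this Alekseev-formula-plus-union-bound program for the two-timescale case), and your identification of the one genuinely game-specific step---that no potential function exists, so the Lyapunov function must come from the linearization at the stable differential Nash rather than from any descent lemma---is precisely the observation that makes the citation legitimate. The caveats are attributional rather than fatal: your account of which error source produces the $\exp(-C_2\vep^{1/2}/\gamma_s^{1/2})$ term versus the $\zeta_s$ term is plausible but speculative, and a fully rigorous version would have to carry out the uniform-in-$\tind$ concentration argument you only gesture at. What the paper's route buys is brevity and correctness by reference; what yours buys is a self-contained account that makes visible where each hypothesis (A2a, A2b, stability of $x^\ast$) is actually used.
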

The above theorem implies that $x_k\in B_\vep(x^\ast)$ for all $k\geq \no+\lceil
\log(4\tilde{K}/\vep)\lambda^{-1}\rceil+1$ with high probability for some constant $\tilde{K}$ that
depends only on $\lambda, r, \tau_L$, and $d$. 

\begin{proof}
    Since $x^\ast$ is a stable differential Nash equilibrium, $J(x^\ast)$
    is positive definite and $D_i^2f_i(x^\ast)$ is positive definite for each
    $i\in \mc{I}$. Thus $x^\ast$ is a locally asymptotically stable hyperbolic equilibrium
    point of $\dot{x}=-\omega(x)$. Hence, the assumptions of Theorem
    1.1~\cite{thoppe:2018aa} are satisfied so that we can invoke the result
    which gives us the high probability bound for stochastic gradient-based
    learning in games. 
\end{proof}

The above theorem has a direct corollary specializing to the case where the
gradient-based learning rule with uniform stepsizes is initialized inside a ball
of radius $r$ constained in the region of attraction---i.e.,
$B_r(x^\ast)\subset \mc{V}$. 
\begin{corollary}
    Let $x^\ast$ be a stable differential Nash equilibrium of $(f_1,\ldots,
    f_n)$ and suppose that Assumption~\ref{ass:lip} holds (excluding A2b.iii).  Fix any $\vep>0$ such that
    $B_{\vep}(x^\ast)\subset B_r(x^\ast)\subset \mc{V}$. Let $\zeta_\tind$, $T$,
    and $h_2(\vep)$ be defined as in Theorem~\ref{thm:new}. Suppose that
$1/\gamma_\tind\geq h_2(\vep)$ for all $\tind\geq 0$ and that $x_0\in
B_r(x^\ast)$. Then, with $C_1, C_2>0$ as in Theorem~\ref{thm:new},
\begin{align*}
    &\textstyle\Pr\left( \bar{x}(t)\in B_\vep(x^\ast) \ \forall t\geq T+1|\
    \bar{x}(t_{\no})\in B_r(x^\ast) \right)\\
    &\qquad\textstyle\geq 1-\sum_{s=0}^\infty\big(
    C_1\exp(-C_2\vep^{1/2}/\gamma_s^{1/2})\textstyle+C_1\exp(-C_2\min\{\vep,
    \vep^2\}/\zeta_s)\big).
\end{align*}
\end{corollary}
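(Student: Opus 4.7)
The plan is to derive the corollary directly from Theorem~\ref{thm:new} by specializing to the initial time $\no = 0$. The hypotheses of the corollary are designed so that the ``large $\no$'' requirement of the theorem becomes vacuous: since $1/\gamma_\tind \geq h_2(\vep)$ is assumed to hold for \emph{all} $\tind \geq 0$, one may take $N = 0$ in the statement of Theorem~\ref{thm:new}, and therefore $\no = 0$ satisfies $\no \geq N$.

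First I would check that Assumption~\ref{ass:lip} (excluding A2b.iii, which is the two-timescale separation condition that is not needed in the uniform-rate setting) is carried over verbatim, so the underlying stochastic approximation machinery from Theorem~\ref{thm:new} applies. Next I would identify the conditioning event: with $\no = 0$, the event $\bar{x}(t_{\no}) \in B_r(x^\ast)$ becomes $x_0 \in B_r(x^\ast)$, which is precisely the initialization hypothesis of the corollary. The sampling time $t_{\no}$ collapses to $t_0 = 0$, so the tail event $\{\bar{x}(t)\in B_\vep(x^\ast) \ \forall t\geq t_{\no}+T+1\}$ simplifies to $\{\bar{x}(t)\in B_\vep(x^\ast) \ \forall t\geq T+1\}$.

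Finally, I would reindex the summation in the high-probability bound. The theorem's bound contains $\sum_{s=\no}^\infty$; with $\no = 0$ this becomes $\sum_{s=0}^\infty$, matching the statement of the corollary. The constants $C_1, C_2$ and the functions $\zeta_\tind$, $T$, $h_2(\vep)$ are inherited without change, as explicitly noted in the corollary. No new estimates are required.

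The only thing to watch out for is that the functions $h_1(\vep)$ and $h_2(\vep)$ in Theorem~\ref{thm:new} are defined so that $T \geq h_1(\vep)$ and $1/\gamma_\tind \geq h_2(\vep)$ for $\tind \geq N$ are the governing constraints; one should confirm that nothing in the proof of Theorem~\ref{thm:new} actually uses $\no > 0$ in an essential way (e.g., to absorb transient error before the stepsizes become small enough), as opposed to merely using $\no \geq N$. Since here we strengthen the stepsize condition to hold uniformly from $\tind = 0$, this is precisely the trade that makes the initialization time irrelevant, and the corollary then follows as an immediate specialization with no additional estimation work.
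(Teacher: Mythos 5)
Your proposal is correct and matches the paper's intent exactly: the paper offers no separate proof, stating only that the result is a ``direct corollary'' of Theorem~\ref{thm:new} obtained by specializing to $\no=0$, which is precisely the reduction you carry out. Your added care in verifying that the uniform stepsize condition $1/\gamma_\tind\geq h_2(\vep)$ for all $\tind\geq 0$ makes the requirement $\no\geq N$ vacuous is the right (and only) point that needs checking.
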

 
\subsection{Non-Uniform Learning Rates}
\label{subsec:stochastic_multiplelearn}

Consider now that agents have their own
learning rates $\gamma_{i,k}$ for each $i\in \mc{I}$. 
In environments with several
autonomous agents, as compared to the objective of \emph{computing} Nash equilibria in
a game, it is perhaps more reasonable to consider the scenario in
which the agents have their own individual learning rate. For the sake of brevity,
we show the convergence result in detail for the two agent case---that is, where
$\mc{I}=\{1,2\}$. We note that
the extension to $n$ agents is straightforward.
The proof leverages recent 
results from the theory of stochastic approximation presented in~\cite{borkar:2018aa} 
and we note that our objective here is to show that they apply to
games and provide commentary on the interpretation of the results in
this context.

The gradient-based learning rules  are given by
\begin{equation}
x_{i,k+1}=x_{i,k}-\gamma_{i,k}(\omega(x_k)+w_{i,k+1})
    \label{eq:noisyupdate}
\end{equation}
so that with $\gamma_{2,k}=o(\gamma_{1,k})$, in the limit $\tau \rar 0$, the above system can be thought of
as approximating the singularly perturbed system 
\begin{align}
    \bmat{\dot{x}_{1}(t)\\
    \dot{x}_2(t)}=
    -\bmat{ D_1f_1(x_1(t),x_2(t))\\
    \tau D_2f_2(x_1(t),x_2(t))}
    \label{eq:singperturb}
\end{align}
 Indeed, since $\lim_{k\rar\infty}\gamma_{2,k}/\gamma_{1,k}\rar
0$---i.e., $\gamma_{2,k}\rar 0$ at a faster rate than
$\gamma_{1,k}$---updates to $x_1$ appear to be equilibriated for
the current quasi-static $x_2$ as the dynamics in \eqref{eq:singperturb}
suggest.

\subsubsection{Asymptotic Convergence in the Non-Uniform Learning Rate Setting}
\begin{assumption}
    For fixed $x_2\in X_2$, the system $\dot{x}_1(t)=-D_1f_1(x_1(t),x_2)$ has a globally
    asymptotically stable equilibrium $\lambda(x_2)$.
    \label{ass:twotime-1}
\end{assumption}
\begin{lemma}
    Under Assumptions~\ref{ass:lip} and \ref{ass:twotime-1}, conditioned on the event $\{\sup_k\sum_i
        \|x_{i,k}\|_2<\infty\}$, $(x_{1,k}, x_{2,k})\rar \{(\lambda(x_2), x_2)|\
            x_2\in \mb{R}^{d_2}\}$ almost surely.
    \label{lem:asconverge}
\end{lemma}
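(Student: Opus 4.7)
The plan is to recognize the coupled updates \eqref{eq:noisyupdate} as a two-timescale stochastic approximation scheme and invoke the ``fast iterate tracks its equilibrium manifold'' lemma from \cite{borkar:2018aa} (Chapter~6). Throughout, I condition on $\{\sup_k\sum_i\|x_{i,k}\|_2<\infty\}$, which supplies the boundedness the two-timescale machinery requires; without it one cannot control the noise growth or close the Gronwall step in the pseudo-trajectory argument.

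First I would verify the standing hypotheses of that framework. The martingale-difference property of $\{w_{i,k+1}\}$, their conditional independence across players, and the linear growth bound $\mb{E}[\|w_{i,k+1}\|\mid \mc{F}_k]\leq c_i(1+\|x_k\|)$ from Assumption~\ref{ass:lip} A2a are precisely the noise conditions needed so that, on the boundedness event, the martingale tail sequences vanish almost surely. The Lipschitz property of $\omega$ in A2c yields Lipschitz right-hand sides for the limiting ODEs, and the stepsize conditions in A2b give Robbins--Monro square-summability-and-divergence together with the asymptotic timescale separation $\gamma_{2,k}=o(\gamma_{1,k})$.

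Next I would implement that separation explicitly. Rewriting the slow update as
\[
x_{2,k+1} = x_{2,k} - \gamma_{1,k}\,\xi_k\bigl(\omega(x_k) + w_{2,k+1}\bigr), \qquad \xi_k := \gamma_{2,k}/\gamma_{1,k} \to 0,
\]
shows that on the fast timescale defined by $\{\gamma_{1,k}\}$ the $x_2$ iterate is a vanishing perturbation of a constant. A standard asymptotic-pseudo-trajectory argument then identifies the interpolated fast trajectory as an APT of the frozen-$x_2$ ODE
\[
\dot{x}_1(t) = -D_1 f_1(x_1(t),x_2), \qquad \dot{x}_2(t) = 0.
\]
Assumption~\ref{ass:twotime-1} says that $\lambda(x_2)$ is the globally asymptotically stable equilibrium of the $x_1$ component for each frozen $x_2$, so by the attractor lemma for APTs the joint $\omega$-limit set must lie in the equilibrium manifold $\{(\lambda(x_2),x_2):x_2\in\mb{R}^{d_2}\}$. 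This yields the claimed almost sure convergence to that set.

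The main obstacle is bookkeeping rather than new ideas: I must check that the coupling of $\omega$ through both components and the cross structure of the noise fit the black-box hypotheses. The boundedness conditioning eliminates the growth-control issue; the conditional independence of $w_{1,k+1}$ and $w_{2,k+1}$ given $\mc{F}_k$ ensures that the noise restricted to each timescale remains a martingale difference sequence in its own filtration; and Assumption~\ref{ass:twotime-1} supplies the attractor on which the fast iterate must settle. Note that the lemma is deliberately silent about which $(\lambda(x_2),x_2)$ is reached---pinning down a specific point requires the analogous GAS hypothesis on the slow limiting ODE and is treated separately.
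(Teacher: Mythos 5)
Your proposal is correct and follows essentially the same route as the paper, which simply cites the classical two-timescale stochastic approximation result (Borkar, Chapter~6; Bhatnagar et al., Chapter~3) that you reconstruct: verifying the martingale-noise, stepsize, and Lipschitz hypotheses, viewing the slow iterate as a vanishing perturbation on the fast timescale, and invoking the APT/attractor argument with Assumption~\ref{ass:twotime-1} to land on the equilibrium manifold. Nothing in your sketch deviates from or adds a gap to the paper's (cited) argument.
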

The above lemma follows from classical analysis (see,
e.g.,~\citet[Chapter~6]{bokar:2008aa} or \citet[Chapter~3]{bhatnagar:2013aa}). 

Define the continuous time accumulated after
$k$ samples of  $x_2$ to be $\textstyle t_k=\sum_{l=0}^{k-1}\gamma_{2,k}$
and define $x_2(t,s,x_s)$ for $t\geq s$ to
be the trajectory of $\dot{x}_2=-D_2f_2(\lambda(x_2),x_2)$. Furthermore, define the event
$\mc{E}=\{\sup_k\sum_i
        \|x_{i,k}\|_2<\infty\}$.
\begin{theorem}
    Suppose that Assumptions~\ref{ass:lip} and \ref{ass:twotime-1} hold.
     For any $K>0$,  conditioned on $\mc{E}$,
        \[\textstyle \lim_{k\rar\infty} \sup_{0\leq h\leq K}\|x_{2,k+h}-x_2(t_{k+h}, t_k,
        x_{k})\|_2=0.\]
    \label{thm:converge}
\end{theorem}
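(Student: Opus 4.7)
The plan is to adopt the standard two-timescale stochastic approximation argument in the spirit of Borkar, treating $x_2$ as the slow variable whose iterates should asymptotically track the ODE $\dot{x}_2=-D_2f_2(\lambda(x_2),x_2)$, while using Lemma~\ref{lem:asconverge} to replace $x_{1,k}$ by $\lambda(x_{2,k})$ in the limit. First I would rewrite the slow update as
\[
x_{2,k+1}=x_{2,k}-\gamma_{2,k}\bigl(D_2f_2(\lambda(x_{2,k}),x_{2,k})+\varepsilon_k+w_{2,k+1}\bigr),
\]
where $\varepsilon_k=D_2f_2(x_{1,k},x_{2,k})-D_2f_2(\lambda(x_{2,k}),x_{2,k})$ is the ``quasi-equilibrium defect.'' By Lemma~\ref{lem:asconverge}, on the event $\mc{E}$ we have $\|x_{1,k}-\lambda(x_{2,k})\|\to 0$ a.s., and Assumption~\ref{ass:lip}(A2c) (Lipschitz continuity of $\omega$) then gives $\varepsilon_k\to 0$ a.s.

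Next I would introduce the piecewise-linear interpolation $\bar{x}_2(\cdot)$ of the iterates $\{x_{2,k}\}$ on the time grid $t_k=\sum_{l<k}\gamma_{2,l}$, and compare it to the ODE trajectory $x_2(\,\cdot\,;t_k,x_k)$ that starts at $x_k$ at time $t_k$. Writing both in integral form and subtracting yields, for $0\leq h\leq K$,
\[
\bar{x}_2(t_{k+h})-x_2(t_{k+h};t_k,x_k)=-\int_{t_k}^{t_{k+h}}\!\bigl(F(\bar{x}_2(s))-F(x_2(s;t_k,x_k))\bigr)\,ds+\xi_{k,h},
\]
where $F(z)=D_2f_2(\lambda(z),z)$ and $\xi_{k,h}$ collects the discretization error, the noise increments $\sum_{\ell}\gamma_{2,\ell}w_{2,\ell+1}$, and the perturbation $\sum_{\ell}\gamma_{2,\ell}\varepsilon_\ell$ over the window $[t_k,t_k+K]$. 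Given Lipschitz continuity of $F$ with constant $L_F$, Gronwall's inequality then gives
\[
\sup_{0\leq h\leq K}\|\bar{x}_2(t_{k+h})-x_2(t_{k+h};t_k,x_k)\|_2\leq e^{L_F K}\sup_{0\leq h\leq K}\|\xi_{k,h}\|_2.
\]

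It remains to show $\sup_{0\leq h\leq K}\|\xi_{k,h}\|_2\to 0$ a.s. on $\mc{E}$. The martingale contribution $M_n=\sum_{\ell=0}^{n-1}\gamma_{2,\ell}w_{2,\ell+1}$ has predictable quadratic variation bounded, via Assumption~\ref{ass:lip}(A2a) and boundedness on $\mc{E}$, by a constant multiple of $\sum_\ell \gamma_{2,\ell}^2<\infty$ (Assumption~\ref{ass:lip}(A2b.i)); hence $\{M_n\}$ converges a.s.\ and its increments $M_{k+h}-M_k$ vanish uniformly in $h$ as $k\to\infty$. The $\varepsilon_k$ contribution is handled by $\varepsilon_k\to 0$ a.s.\ combined with $\sum_{\ell=k}^{k+h-1}\gamma_{2,\ell}\leq K+o(1)$; the discretization error vanishes since $\gamma_{2,k}\to 0$ and $F\circ\bar{x}_2$ stays bounded on $\mc{E}$. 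Combining these, the right-hand side of the Gronwall bound tends to zero, yielding the claim.

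The main obstacle is establishing Lipschitz continuity of $F(z)=D_2f_2(\lambda(z),z)$, since this underpins the Gronwall step. This reduces to Lipschitz continuity of the implicit map $\lambda$, which near a differential Nash follows from the implicit function theorem applied to $D_1f_1(\lambda(x_2),x_2)=0$, using nondegeneracy of $D_1^2f_1$ (guaranteed locally by the definition of a differential Nash). Globally, one either imposes uniform Lipschitzness of $\lambda$ as in \citet{borkar:2018aa}, or localizes the argument by first showing that on $\mc{E}$ the iterates eventually enter a neighborhood on which the implicit function theorem furnishes a $C^1$ (hence locally Lipschitz) branch $\lambda$; a standard stopping-time truncation then promotes the local bound to the uniform statement required by the theorem.
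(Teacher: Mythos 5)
Your proposal is correct and follows essentially the same route as the paper: both use Lemma~\ref{lem:asconverge} to replace $x_{1,k}$ by $\lambda(x_{2,k})$ and then show that the interpolated slow iterates form an asymptotic pseudotrajectory of $\dot{x}_2=-D_2f_2(\lambda(x_2),x_2)$. The only difference is that where you carry out the Gronwall-plus-martingale argument explicitly (and justify local Lipschitzness of $z\mapsto D_2f_2(\lambda(z),z)$ via the implicit function theorem), the paper delegates exactly these steps to Propositions~4.1 and~4.2 of \cite{benaim:1999aa}.
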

\begin{proof}
    The proof invokes Lemma~\ref{lem:asconverge} above and Proposition~4.1 and
    4.2 of \cite{benaim:1999aa}. Indeed, by Lemma~\ref{lem:asconverge},
    $(\lambda(x_{2,k})-x_{2,k})\rar 0$ almost surely. Hence, we can study the sample path generated by 
    \[x_{2,k+1}=x_{2,k}-\gamma_{2,k}(D_2f_2(\lambda(x_{2,k}),x_{2,k})+w_{2,k+1}).\]
    Since $D_2f_2 \in C^{q-1}$ for some $q\geq 3$, it is locally
    Lipschitz and, on the event $\{\sup_k\sum_i
        \|x_{i,k}\|_2<\infty\}$, it is bounded.  It thus induces a continuous globally integrable
vector field, and therefore satisfies the assumptions of Proposition~4.1
of~\cite{benaim:1999aa}. Moreover, under Assumption~\ref{ass:lip}, the
assumptions of Proposition~4.2 of~\cite{benaim:1999aa} are satisfied. Hence,
invoking said propositions, we get the desired result.
\end{proof}
This result essentially says that the slow player's sample path asymptotically
tracks the flow of \[\dot{x}_2=-D_2f_2(\lambda(x_2),x_2).\]
If we additionally assume that the slow component also has a global attractor,
then the above theorem gives rise to a stronger convergence result. 
\begin{assumption}
    Given $\lambda(\cdot)$ as in Assumption~\ref{ass:twotime-1}, the system $\dot{x}_2(t)=-\tau D_2f_2(\lambda(x_2(t)),x_2(t))$ has  a globally
    asymptotically stable equilibrium $x_2^\ast$. 
    \label{ass:twotime-2}
\end{assumption}
\begin{corollary}
   Under the assumptions of Theorem~\ref{thm:converge} and
    Assumption~\ref{ass:twotime-2}, conditioned
   on the event $\mc{E}$, gradient-based learning converges almost surely to
   a stable attractor $(x_1^\ast,x_2^\ast)$, where $x_1^\ast=\lambda(x^\ast_2)$,
   the set of which contains the stable differential Nash equilibria.
\end{corollary}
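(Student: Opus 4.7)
The plan is to chain Theorem~\ref{thm:converge} and Assumption~\ref{ass:twotime-2} together, using the asymptotic pseudotrajectory property in the former and the global attractor in the latter. Conditioning on $\mc{E}$, Theorem~\ref{thm:converge} says that for any $K>0$, $\sup_{0\leq h\leq K}\|x_{2,k+h}-x_2(t_{k+h},t_k,x_{2,k})\|_2 \to 0$, i.e.~the slow-component iterates form an asymptotic pseudotrajectory of the reduced ODE $\dot{x}_2 = -D_2f_2(\lambda(x_2),x_2)$. Under Assumption~\ref{ass:twotime-2}, this ODE admits $x_2^\ast$ as a globally asymptotically stable equilibrium, so by the standard Benaim--Hirsch argument (every limit point of an asymptotic pseudotrajectory lies in an internally chain-transitive set of the flow, and under a GAS equilibrium the only such set is $\{x_2^\ast\}$), we obtain $x_{2,k}\to x_2^\ast$ almost surely on $\mc{E}$.

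Once the slow iterates are pinned down, the fast iterates follow. Lemma~\ref{lem:asconverge} gives $(x_{1,k},x_{2,k})\to \{(\lambda(x_2),x_2): x_2\in \R^{d_2}\}$ almost surely on $\mc{E}$, which combined with $x_{2,k}\to x_2^\ast$ and the continuity of $\lambda$ (which follows from the implicit function theorem applied to $D_1f_1(\cdot,x_2)=0$ together with $D_1^2f_1>0$ on a neighborhood, or more simply from the assumed global smoothness and uniqueness of $\lambda(x_2)$ under Assumption~\ref{ass:twotime-1}) yields $x_{1,k}\to \lambda(x_2^\ast)=x_1^\ast$ a.s. Thus $(x_{1,k},x_{2,k})\to(x_1^\ast,x_2^\ast)$ a.s.~on $\mc{E}$, and by construction this limit is a stable attractor of the two-timescale flow.

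The last item, that the set of such stable attractors \emph{contains} the stable differential Nash equilibria, is a local check at any stable differential Nash $(x_1^\ast,x_2^\ast)$. There, $D_1^2f_1(x_1^\ast,x_2^\ast)>0$ makes $x_1^\ast$ a locally asymptotically stable equilibrium of the fast ODE $\dot{x}_1=-D_1f_1(x_1,x_2^\ast)$, producing a local branch $\lambda$. Differentiating $D_1f_1(\lambda(x_2),x_2)\equiv 0$ gives $D\lambda(x_2^\ast)=-(D_1^2f_1)^{-1}D_{21}f_1$, so the Jacobian of the reduced slow vector field at $x_2^\ast$ equals the Schur complement $D_2^2f_2 - D_{12}f_2(D_1^2f_1)^{-1}D_{21}f_1$, which inherits positive-definiteness of its symmetric part from $\spec(J(x^\ast))\subset \mb{C}_+^\circ$ together with $D_1^2f_1>0$ via the standard Schur-complement/positive-stable block argument. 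Hence $x_2^\ast$ is a locally asymptotically stable equilibrium of the reduced slow dynamics, matching the structure that makes it a stable attractor in the local version of the preceding argument. I expect the main subtlety to be precisely this last Schur-complement step, since positive-stable (rather than positive-definite) $J$ does not immediately transfer to the reduced system; one needs to invoke that the relevant block is symmetric positive definite to conclude, which is exactly what the differential Nash structure supplies.
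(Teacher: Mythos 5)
Your first two paragraphs reproduce exactly the argument the paper intends (the corollary is stated without a written proof precisely because it is this immediate chain): Theorem~\ref{thm:converge} makes the slow iterates an asymptotic pseudotrajectory of $\dot{x}_2=-D_2f_2(\lambda(x_2),x_2)$, the Bena\"im limit-set theorem confines their limit set to an internally chain transitive set of that flow, Assumption~\ref{ass:twotime-2} collapses that set to $\{x_2^\ast\}$ on the event $\mc{E}$, and Lemma~\ref{lem:asconverge} together with continuity of $\lambda$ then forces $x_{1,k}\rar\lambda(x_2^\ast)$. This is the same mechanism the paper invokes in the sentence immediately following the corollary about convergence to the internally chain transitive sets of \eqref{eq:singperturb}. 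No issues there.

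The problem is your final paragraph. You claim that at a stable differential Nash equilibrium the reduced slow Jacobian, i.e.\ the Schur complement $J_{22}-J_{21}J_{11}^{-1}J_{12}$, ``inherits positive-definiteness of its symmetric part from $\spec(J(x^\ast))\subset\mb{C}_+^\circ$ together with $D_1^2f_1>0$.'' That inference fails. The accretivity-preservation property of Schur complements (take $x=(-J_{11}^{-1}J_{12}y,\,y)$ so that $x^TJx=y^T(J_{22}-J_{21}J_{11}^{-1}J_{12})y$) requires the symmetric part of the \emph{full} $J(x^\ast)$ to be positive definite; positive-stability of $J(x^\ast)$ is strictly weaker and does not imply this, even when the leading block is symmetric positive definite. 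All that positive-stability buys you is $\det(J_{22}-J_{21}J_{11}^{-1}J_{12})=\det J/\det J_{11}>0$, which fixes the sign of the product of the reduced eigenvalues but not their location once $d_2\geq 2$. The paper itself flags this failure mode in the discussion (``if an agent decides to learn slower, a stable differential Nash equilibrium can go unstable''), which is exactly what your Schur-complement step would rule out. Fortunately the corollary does not need this step: under Assumptions~\ref{ass:twotime-1}--\ref{ass:twotime-2} the limit is whatever point $x_2^\ast$ the reduced flow is assumed to stabilize, and the clause ``the set of which contains the stable differential Nash equilibria'' is a descriptive remark---qualified immediately afterwards by the observation that the limit may be a spurious stable non-Nash point---rather than something the paper proves. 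You should either drop the Schur-complement claim or strengthen its hypothesis to positive definiteness of $S(x^\ast)=\frac{1}{2}(J(x^\ast)+J(x^\ast)^T)$, under which the argument you sketch does go through.
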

More generally, the process $(x_{1,k},x_{2,k})$ will converge almost surely to the 
internally chain transitive set of the limiting dynamics \eqref{eq:singperturb}
and this set contains the stable Nash equilibria.
If the only internally chain transitive sets for \eqref{eq:singperturb} are
isolated equilibria (this occurs, e.g., if the game is a potential game), then
$x_k$ converges almost surely to a stationary point of the dynamics, a subset of which are
stable local Nash equilibria.

It is also worth commenting on what types of games will satisfy these assumptions.
To satisfy
Assumption~\ref{ass:twotime-1}, it is sufficient for the fastest player's cost
function to be convex
in their choice
variable. 

\begin{proposition}
    Suppose Assumption~\ref{ass:lip} and \ref{ass:twotime-2} hold and that     $f_1(\cdot,x_2)$ is convex.  
    Conditioned on the event $\mc{E}$, the sample points of gradient-based
        learning  satisfy $(x_{1,k}, x_{2,k})\rar \{(\lambda(x_2), x_2)|\
            x_2\in \mb{R}^{d_2}\}$ almost surely. Moreover, 
$(x_{1,k}, x_{2,k}) \rar
   (x_1^\ast,x_2^\ast)$ almost surely, where $x_1^\ast=\lambda(x^\ast_2)$.
   \label{cor:almostconvex}
\end{proposition}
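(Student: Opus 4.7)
The plan is to deduce the proposition by assembling Lemma~\ref{lem:asconverge} and the corollary following Theorem~\ref{thm:converge}. Both of those conclusions are already established under Assumptions~\ref{ass:lip}, \ref{ass:twotime-1}, and \ref{ass:twotime-2}. The present hypotheses supply Assumptions~\ref{ass:lip} and \ref{ass:twotime-2} outright, so the only new work is to derive Assumption~\ref{ass:twotime-1}: for each fixed $x_2\in X_2$, the fast-timescale ODE $\dot x_1=-D_1 f_1(x_1,x_2)$ admits a globally asymptotically stable equilibrium $\lambda(x_2)$.

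To verify this, I would fix $x_2$ and use the Lyapunov candidate $V(x_1)=f_1(x_1,x_2)-\inf_{y\in X_1}f_1(y,x_2)$. Differentiating along the fast flow gives $\dot V=-\|D_1 f_1(x_1,x_2)\|^2\le 0$, with equality precisely on the set of stationary points of $f_1(\cdot,x_2)$. Convexity of $f_1(\cdot,x_2)$ then forces every such stationary point to be a global minimizer, so LaSalle's invariance principle applies: any bounded trajectory of the fast system converges to the minimizing set. Under the conditioning on $\mc{E}$, trajectories of interest remain bounded, and smoothness plus convexity (together with positive semidefiniteness of $D_1^2 f_1$) lets me select a measurable branch $\lambda(x_2)$ of minimizers; if $f_1(\cdot,x_2)$ is strictly convex or coercive the minimizer is unique and global asymptotic stability holds in the classical sense.

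With Assumption~\ref{ass:twotime-1} in place, Lemma~\ref{lem:asconverge} yields the first claim: on $\mc{E}$, $(x_{1,k},x_{2,k})\to\{(\lambda(x_2),x_2):x_2\in\mb{R}^{d_2}\}$ almost surely. For the second claim, Theorem~\ref{thm:converge} shows that the slow component asymptotically tracks the flow of $\dot x_2=-D_2 f_2(\lambda(x_2),x_2)$, and Assumption~\ref{ass:twotime-2} supplies a global attractor $x_2^\ast$ for that flow, so $x_{2,k}\to x_2^\ast$ almost surely on $\mc{E}$. Continuity of $\lambda(\cdot)$ (from the implicit function theorem applied to $D_1 f_1=0$ at the regular minimizer, inherited from the $C^{q-1}$ regularity of $\omega$) then delivers $x_{1,k}\to\lambda(x_2^\ast)=x_1^\ast$ almost surely.

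The main obstacle is the step from plain convexity to \emph{global} asymptotic stability of the fast flow: bare convexity only guarantees that all critical points are global minima, not existence, uniqueness, or global attraction of a single minimizer. The cleanest route is to combine LaSalle with either strict convexity, coercivity, or the boundedness supplied by $\mc{E}$; a secondary subtlety is measurability/continuity of $\lambda(\cdot)$, which follows once the minimizer is locally unique via the implicit function theorem applied at a point where $D_1^2 f_1$ is positive definite. Once these analytical points are handled, the remainder of the argument is a mechanical invocation of the two-timescale machinery already in place.
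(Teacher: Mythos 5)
Your proposal follows essentially the same route as the paper, which offers no separate proof of this proposition beyond the preceding remark that convexity of $f_1(\cdot,x_2)$ suffices to verify Assumption~\ref{ass:twotime-1}, after which Lemma~\ref{lem:asconverge} and the corollary to Theorem~\ref{thm:converge} deliver both claims. Your caveat that bare convexity does not by itself guarantee a globally asymptotically stable equilibrium of the fast flow (existence and uniqueness of the minimizer can fail, e.g.\ for constant or non-coercive convex costs) is a legitimate observation about a point the paper glosses over, and your proposed fix via strict convexity or coercivity is the right one.
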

Note that $(x_1^\ast,x_2^\ast)$ could still be a spurious stable non-Nash point
still since the above implies that
$D(D_2f_2(\lambda(\cdot),\cdot))|_{x_2^\ast}>0$,
which does not necessarily  imply that $D_2^2f_2(\lambda(x_2^\ast),x_2^\ast)>0$.

\begin{remark}[Relaxation to Local Asymptotic Stability.]
    Under relaxed assumptions on {global asymptotic stability},
we can obtain high-probability results on convergence to locally asymptotically
stable attractors. If it is assumed that $x_0$ is in the region of attraction
for a locally asymptotically stable attractor, then the above results can be
stated with only the assumption of a locally asymptotic stability. However, this
is difficult to ensure in practice. To relax the result to a local guarantee
regardless of the initialization requires conditioning on an unverifiable
event---i.e., the high-probability bound in this case is conditioned on the event
$\{ \{x_{1,k}\}$ belongs to a compact set $B$, which depends on the sample
point,  of $\cap_{x_2} \mc{R}(\lambda(x_2))\}$ where $\mc{R}(\lambda(x_2))$ is
the region of attraction of $\lambda(x_2)$. None-the-less, it is possible to
leverage results from stochastic
approximation~\cite{karmakar:2018aa}, \cite[Chapter~2]{bokar:2008aa} to prove local
versions of the results for non-uniform learning rates. Further investigation is
required to provide concentration bounds for not only games but stochastic
approximation in general.
\end{remark} 

\subsubsection{High-Probability, Finite-Sample Guarantees with Non-Uniform
Learning Rates}
In the stochastic setting, the learning dynamics are stochastic approximation
updates, and non-uniform learning rates lead to a multi-timescale setting. The
results  leverage recent theoretical guarantees for
two-timescale analysis of stochastic approximation such as~\cite{borkar:2018aa}.

For a stable differential Nash equilibrium
$x^\ast=(\lambda(x_2^\ast),x_2^\ast)$, using the bounds in Lemma~\ref{lem:defK}
and Lemma~\ref{lem:defbarK} in
Appendix~\ref{app:proofs_stocahstic}, we can provide a high-probability
guarantee that $(x_{1,k},x_{2,k})$
gets locked in to a ball around $(\lambda(x_2^\ast),x_2^\ast)$. 

Let $\bar{x}_i(\cdot)$ denote the linear interpolates between sample points
$x_{i,k}$ and, as in the preceding sub-section, let $x_i(\cdot,t_{i,k},x_k)$ denote
the continuous time flow of $\dot{x}_i$ with initial data $(t_{i,k},x_k)$ where
$t_{i,k}=\sum_{l=0}^{k-1}\gamma_{i,k}$. 
Alekseev's formula is a nonlinear
variation of constants formula that provides solutions to perturbations of
differential equations using a local linear approximation. We can apply it to
the \emph{asymptotic pseudo-trajectories} $\bar{x}_i(\cdot)$ in each timescale. For these local
approximations, linear systems theory
lets us find growth rate bounds for the perturbations, which can, in turn, be
used to bound the normed difference between the continuous time flow and the
asymptotic  pseudo-trajectories. More detail is provided in
Appendix~\ref{app:proofs_stocahstic}.

Towards this end, fix $\vep\in[0,1)$ and let
    $N$ be such that $\gamma_{1,\tind}\leq
    \vep/(8K)$ and $\tau_\tind\leq \vep/(8K)$ for all $\tind\geq N$. Define
    time
    sequences  
    $t_{1,k}=\tilde{t}_{k}$ and $t_{2,k}=\hat{t}_{k}$ which keep track of the
    time accumulated up to iteration $k$ on each of the timescales. Let $\no\geq
    N$ and, with
$K$ as in Lemma~\ref{lem:defK} (Appendix~\ref{app:proofs_stocahstic}), let $T$ be such that
\[e^{-\kappa_1(\tilde{t}_{\tind}-\tilde{t}_{\no})}H_{\no}\leq \vep/(8K)\] for all
$\tind\geq
\no+T$ where $\kappa_1>0$ is a constant derived from Alekseev's formula applied
to $\bar{x}_1(\cdot)$. Analogously, with $\bar{K}$ as in Lemma~\ref{lem:defbarK}
(Appendix~\ref{app:proofs_stocahstic}),  let
\[e^{-\kappa_2(\hat{t}_\tind-\hat{t}_{\no})}(\|\bar{x}_2(\hat{t}_{\no})-{x}_2(\hat{t}_{\no})\|\leq
\vep/(8\bar{K}),\] for all $\tind\geq \no+T$ where $\kappa_2>0$ is a constant derived from Alekseev's formula applied
to $\bar{x}_2(\cdot)$. Define constants 
\[\beta_\tind=\textstyle\max_{\no\leq s\leq
    \tind-1}\exp(-\kappa_1(\sum_{i=s+1}^{\tind-1}\gamma_{1,i}))\gamma_{1,s}, \ \textstyle\eta_\tind=\max_{\no\leq s\leq
    \tind-1}\big(\exp(-\kappa_2(\sum_{i=s+1}^{\tind-1}\gamma_{2,i}))\gamma_{2,s}\big),\]
 and $\tau_k=\gamma_{2,k}/\gamma_{1,k}$.
\begin{theorem}
    Suppose that Assumptions~\ref{ass:lip}--\ref{ass:twotime-2}
    hold and let $\gamma_{2,k}=o(\gamma_{1,k})$. %
      Given a stable
    differential Nash equilibrium $x^\ast=(\lambda(x_2^\ast),x_2^\ast)$, 
player 2's sample path (generated by \eqref{eq:noisyupdate} with $i=1$) will
    asymptotically track  $z_k=\lambda(x_{2,k})$. Moreover, given
    $\vep\in[0,1)$, $x_k$ will get `locked in' to a $\vep$--neighborhood with
    high probability conditioned on reaching  $B_{r_0}(x^\ast)$
    by iteration $\no$.
    That is, letting $\bar{\tind}= \no+T+1$, for some $C_1, C_2>0$,
    \begin{align}
        \mathrm{P}(\|x_{1,\tind}-z_\tind\|\leq \vep, \forall \tind\geq
        \bar{\tind}| x_{1,\no},z_{\no}\in
        B_{r_0})\geq&\textstyle
        1-\sum_{\tind=\no}^{\infty}C_1\exp\big(-C_2\sqrt{\vep}/\sqrt{\gamma_{1,\tind}}\big)\notag\\
        &\textstyle-\sum_{\tind=\no}^\infty
        C_2\exp\big(-C_2\sqrt{\vep}/\sqrt{\tau_\tind}\big)\notag\\
&\textstyle\quad -\sum_{\tind=\no}^{\infty} C_1\exp\big(-C_2\vep^2/\beta_\tind\big).
        \label{eq:pxz}
    \end{align}
     Moreover, for some constants $\tilde{C}_1, \tilde{C}_2>0$, 
    \begin{align}
        \mathrm{P}(\|x_{2,\tind}-x_{2}(\hat{t}_\tind)\|\leq \vep, \forall
        \tind
        \geq\bar{\tind}
        |x_{\no},z_{\no}\in B_{r_0}(x^\ast)) \textstyle \geq&\textstyle 1+\sum_{\tind=\no}^\infty
        \tilde{C}_1\exp\big(-\tilde{C}_2\sqrt{\vep}/\sqrt{\gamma_{1,\tind}}\big)\notag\\
        &\textstyle -\sum_{\tind=\no}^\infty
        \tilde{C}_1\exp\big(-\tilde{C}_2\sqrt{\vep}/\sqrt{\tau_\tind}\big)\notag\\
        &\quad\textstyle - \sum_{\tind=\no}^\infty
        \tilde{C}_1\exp\big(-\tilde{C}_2\vep^2/\beta_\tind\big)\notag\\
        &\qquad\textstyle - \sum_{\tind=\no}^\infty
        \tilde{C}_1\exp\big(-\tilde{C}_2\vep^2/\eta_\tind\big).
        \label{eq:py}
    \end{align}
    \label{thm:conjecturetrack}
\end{theorem}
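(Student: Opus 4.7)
The plan is to invoke the two-timescale stochastic approximation framework of \cite{borkar:2018aa} together with Alekseev's nonlinear variation-of-constants formula, applied once on each timescale and glued together by the growth constants $K$ and $\bar{K}$ from Lemmas~\ref{lem:defK} and~\ref{lem:defbarK}. First I would use the stable differential Nash structure to set up local linearizations: Definition~\ref{def:DNE} plus $\det J(x^\ast)\neq 0$ gives $D_1^2 f_1(x^\ast)\succ 0$, so by the implicit function theorem applied to $D_1 f_1(\cdot,x_2)=0$, $\lambda(\cdot)$ is $C^{q-1}$ and Lipschitz near $x_2^\ast$. The Hurwitz linearization of $-D_1 f_1(\cdot,x_2^\ast)$ at $\lambda(x_2^\ast)$ and that of the reduced slow flow $-D_2 f_2(\lambda(\cdot),\cdot)$ at $x_2^\ast$ supply the two contraction rates $\kappa_1,\kappa_2>0$ appearing in the statement.

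Working on the fast timescale, I would apply Alekseev's formula to express
\[
    \bar{x}_1(\tilde{t}_\tind) - z(\tilde{t}_\tind) = \Phi(\tilde{t}_\tind,\tilde{t}_{\no})\big(\bar{x}_1(\tilde{t}_{\no})-z(\tilde{t}_{\no})\big) + \int_{\tilde{t}_{\no}}^{\tilde{t}_\tind} \Phi(\tilde{t}_\tind,s)\, dE(s),
\]
where $\Phi$ is the transition operator of the linearized flow and $dE$ collects three perturbation increments: (i) the slow drift $\lambda(x_{2,\tind+1})-\lambda(x_{2,\tind})=O(\tau_\tind)$, (ii) the martingale noise $\gamma_{1,\tind}w_{1,\tind+1}$, and (iii) higher-order remainders $O(\gamma_{1,\tind}^2)$ from the discretization and from the $R$-type expansion already used in Proposition~\ref{prop:nonuniformone}. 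The damping factor $e^{-\kappa_1(\tilde{t}_\tind-\tilde{t}_{\no})}$ in $\Phi$, together with the hypothesis $e^{-\kappa_1(\tilde{t}_\tind-\tilde{t}_{\no})}H_{\no}\leq \vep/(8K)$, collapses the initial-error contribution below $\vep/(8K)$ for $\tind\geq \no+T$. Each remaining sum is then controlled by Bernstein/Azuma concentration conditional on $\mc{F}_\tind$: the per-step noise term supplies the $\sqrt{\vep}/\sqrt{\gamma_{1,\tind}}$ tail, the per-step slow-drift term the $\sqrt{\vep}/\sqrt{\tau_\tind}$ tail, and the exponentially weighted accumulated noise, whose quadratic variation is governed by $\beta_\tind$, produces the $\vep^2/\beta_\tind$ tail. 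Union bounding over $\tind\geq \no$ delivers~\eqref{eq:pxz}.

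I would then repeat the Alekseev construction on the slow timescale against $\dot{x}_2=-D_2 f_2(\lambda(x_2),x_2)$, using the growth constant $\bar{K}$ in place of $K$ and the rate $\kappa_2$ in place of $\kappa_1$. The slow perturbation now contains an \emph{additional} term $D_2 f_2(x_{1,\tind},x_{2,\tind})-D_2 f_2(\lambda(x_{2,\tind}),x_{2,\tind})$, which is $O(\|x_{1,\tind}-z_\tind\|)$ by the $L_\omega$-Lipschitz hypothesis A2c; this term is bounded precisely by the event controlled in~\eqref{eq:pxz}, so~\eqref{eq:py} inherits every tail of the fast analysis and picks up a new $\vep^2/\eta_\tind$ term from its own exponentially damped slow martingale.

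The main obstacle is the mutual coupling of the two estimates: the fast-timescale concentration assumes the iterates stay in $B_{r_0}(x^\ast)$ so that the linearization, the Lipschitz constant of $\lambda$, and the constants $K,\bar{K}$ all remain valid, while the lock-in event itself is what we are estimating. I would resolve this by the standard stopping-time lock-in induction of \cite{borkar:2018aa}: let $\sigma$ be the first exit of either component from $B_{r_0}(x^\ast)$, prove inductively that on $\{\sigma>\tind\}$ the Alekseev representation and the concentration bounds above hold verbatim, and conclude that $\{\sigma<\infty\}$ is contained in the union of the bad events whose total probability is exactly the right-hand sides of~\eqref{eq:pxz} and~\eqref{eq:py}. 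Choosing $N$ so that $\gamma_{1,\tind},\tau_\tind\leq\vep/(8K)$ for $\tind\geq N$ keeps each per-step increment inside the linearization ball and absorbs the remaining $O(1)$ multiplicative constants into $C_1,C_2$ (and analogously $\tilde{C}_1,\tilde{C}_2$).
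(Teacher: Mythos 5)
Your proposal is correct and follows essentially the same route as the paper: the paper's own argument (sketched in the main text and set up in Appendix~\ref{app:proofs_stocahstic}) likewise applies Alekseev's formula separately on each timescale to obtain the deterministic error decompositions of Lemmas~\ref{lem:defK} and~\ref{lem:defbarK}, then concentrates each term (martingale noise, slow drift, exponentially weighted accumulated noise governed by $\beta_\tind$ and $\eta_\tind$) and union bounds over $\tind\geq \no$, deferring the lock-in induction details to \citet{borkar:2018aa} and \citet{thoppe:2018aa}. Your explicit treatment of the stopping-time induction and of the coupling term $D_2f_2(x_{1,\tind},x_{2,\tind})-D_2f_2(\lambda(x_{2,\tind}),x_{2,\tind})$ matches what those references supply and what the paper's appendix leaves implicit.
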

\begin{corollary}
    Fix $\vep\in[0,1)$ and suppose that $\gamma_{1,n}\leq \vep/(8K)$ for all
        $n\geq 0$. With $K$ as in Lemma~\ref{lem:defK}
        (Appendix~\ref{app:proofs_stocahstic}), let $T$ be such that
$e^{-\kappa_1(\tilde{t}_n-\tilde{t}_{0})}H_{0}\leq \vep/(8K)$ for all $n\geq T$. 
Furthermore,  with $\bar{K}$ as in Lemma~\ref{lem:defbarK}
(Appendix~\ref{app:proofs_stocahstic}),  let
$e^{-\kappa_2(\hat{t}_n-\hat{t}_{0})}(\|\bar{x}_2(\hat{t}_{0})-{x}_2(\hat{t}_{0})\|\leq
\vep/(8\bar{K})$, $\forall n\geq T$. Under the assumptions of
Theorem~\ref{thm:conjecturetrack}, $x_{k}$ will will get `locked in' to a $\vep$--neighborhood with
high probability conditioned on $x_0\in B_{r_0}(x^\ast)$ where the
high-probability bounds in  \eqref{eq:pxz} holds with $\no=0$.
\label{cor:zerolock}
\end{corollary}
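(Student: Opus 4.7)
The plan is to obtain this corollary as a direct specialization of Theorem~\ref{thm:conjecturetrack} in which the index $\no$ at which we begin conditioning is pushed all the way back to $\no=0$. The only reason Theorem~\ref{thm:conjecturetrack} required $\no\geq N$ was to ensure that the step sizes satisfied $\gamma_{1,\tind}\leq \vep/(8K)$ and $\tau_\tind\leq \vep/(8K)$ for all $\tind\geq \no$, since these smallness conditions drive the Alekseev-based bounds on the perturbation between the linear interpolates $\bar{x}_i(\cdot)$ and the continuous-time flows $x_i(\cdot)$. Under the hypothesis of the corollary, $\gamma_{1,\tind}\leq \vep/(8K)$ already holds for every $\tind\geq 0$, and because $\gamma_{2,\tind}=o(\gamma_{1,\tind})$ the ratio $\tau_\tind=\gamma_{2,\tind}/\gamma_{1,\tind}$ can be arranged to satisfy the analogous bound from the very first iteration as well. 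Hence the hypothesis that brings $\no$ down to zero is already built into the assumptions.

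First I would verify that the two ``warm-up'' inequalities on the Alekseev residuals, namely $e^{-\kappa_1(\tilde{t}_n-\tilde{t}_0)}H_0\leq \vep/(8K)$ and $e^{-\kappa_2(\hat{t}_n-\hat{t}_0)}\|\bar{x}_2(\hat{t}_0)-x_2(\hat{t}_0)\|\leq \vep/(8\bar{K})$ for all $n\geq T$, are exactly the $\no=0$ versions of the corresponding inequalities invoked in Theorem~\ref{thm:conjecturetrack}. With the initialization hypothesis $x_0\in B_{r_0}(x^\ast)$ standing in for the event $\{x_{1,\no},z_{\no}\in B_{r_0}\}$ at $\no=0$, every assumption required to invoke Theorem~\ref{thm:conjecturetrack} is in place.

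Next I would simply quote Theorem~\ref{thm:conjecturetrack} with $\no=0$ and $\bar{\tind}=T+1$, which immediately yields the lock-in probability bound \eqref{eq:pxz} (and analogously \eqref{eq:py}) with the summation indices starting at $\tind=0$ rather than $\tind=\no$. The exponential sums remain finite because the step-size conditions in Assumption~\ref{ass:lip} together with the inductive definitions of $\beta_\tind,\eta_\tind$ ensure that each summand decays fast enough for the series to converge.

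The only mildly delicate point, and what I would expect to be the main thing to check carefully, is that the constants $C_1, C_2$ (and their counterparts $\tilde{C}_1,\tilde{C}_2$) carry over unchanged when $\no$ is replaced by $0$. These constants depend on $\kappa_1,\kappa_2,K,\bar{K}$, the Lipschitz data from Assumption~\ref{ass:lip}, and the radius $r_0$, none of which depend on the starting iterate; so no re-derivation is required and the bounds hold verbatim with $\no=0$. This gives the stated conclusion.
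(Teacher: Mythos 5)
Your proposal is correct and matches the paper's (implicit) argument: the corollary is stated without proof precisely because it is the $\no=0$ specialization of Theorem~\ref{thm:conjecturetrack}, valid once the warm-up conditions on $\gamma_{1,n}$, $H_0$, and the Alekseev residuals are assumed to hold from iteration zero. The one point worth flagging is that the condition $\tau_n\leq\vep/(8K)$ for all $n\geq 0$ is needed too and is not literally implied by $\gamma_{2,k}=o(\gamma_{1,k})$ (which only gives it eventually), but the corollary's own hypotheses share this omission, so your treatment is faithful to the paper.
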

\begin{remark}[Relaxation to Locally Asymptotically Stable Attractors.]
    In fact, Corollary~\ref{cor:zerolock} holds under a relaxed assumption
    on the stability of $x^\ast$. Indeed, if $x^\ast$ is locally asymptotically
    stable and $x_0\in B_{r_0}(x^\ast)\subset \mc{R}(x^\ast)$ where
    $\mc{R}(x^\ast)$ is the
    region of attraction for $x^\ast$, then the high probability bound from
    Corollary~\ref{cor:zerolock} holds.
\end{remark}
The key technique in proving the above theorem---the complete details are
provided in~\citet{borkar:2018aa} which, in turn, leverages results from
\citet{thoppe:2018aa}---is first to compute the errors between the sample
points from the stochastic learning rules and the continuous time flow generated
by initializing the continuous time limiting dynamics at each sample point and
flowing it forward for time $t_{\tind+1}-t_\tind$, doing this for each $x_{1,k}$ and
$x_{2,k}$ separately and in their own timescale, and then take a union bound over all the continuous time
intervals defined for  $\tind\geq \no$.

\section{Numerical Examples}
\label{sec:examples}
The results in the preceding sections provide convergence guarnatees for a class
of gradient-based learning algorithms to a neighborhood of a {stable} Nash
equilibrium under deterministic and stochastic gradient-based  update rules with
both uniform and non-uniform learning rates. 
In this section, we present several numerical examples that validate these
theoretical results and highlight
interesting aspects of learning in multi-agent settings.

  \subsection{Deterministic  Policy Gradient in Linear Quadratic Dynamic Games}
The first example we explore is a linear quadratic (LQ) game with three players
in the space of linear feedback policies. This game serves as a useful benchmark
since it has a unique global equilibrium that we can compute via a set of
coupled algebraic Riccati equations~\cite{basar:1998aa}. The gradient-based
learning rule for each of the agents is a multi-agent version of policy
gradient in which agents have oracle access to their gradients at each
iteration.
\begin{figure}[t]
    \centering
    \newlength{\imagewidth}
    \settowidth{\imagewidth}{\includegraphics{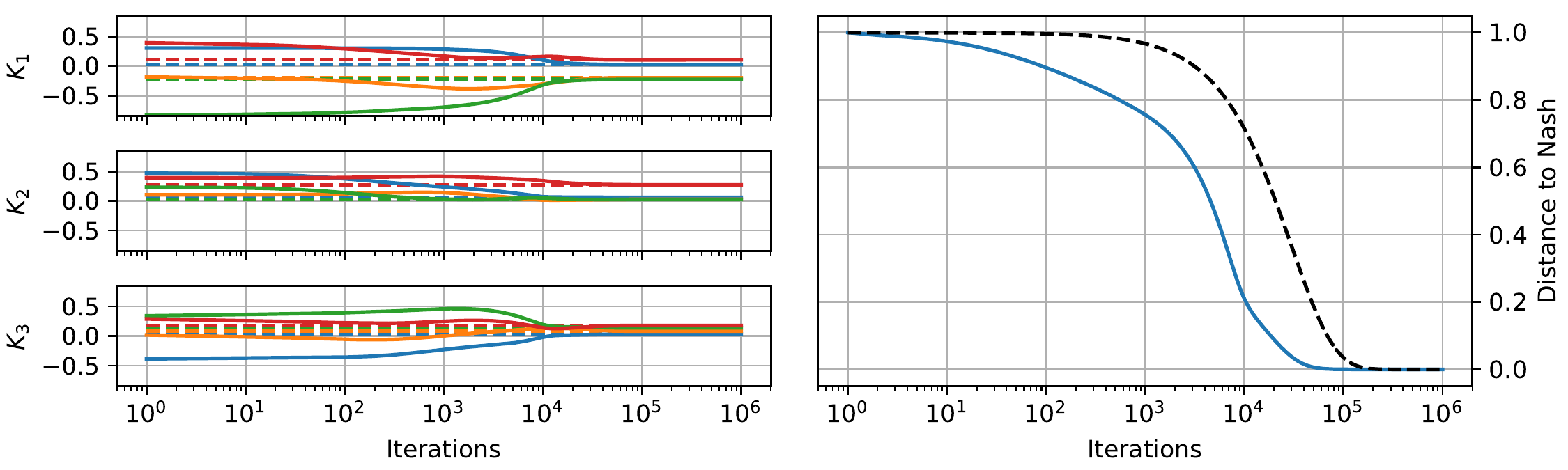}}
    \subfloat[][]{
        \includegraphics[trim = 0 0 0.5\imagewidth{} 0, clip=true, width=0.49\columnwidth]{figs/lqr_benchmark.pdf}
    }
    \subfloat[][]{
        \includegraphics[trim = 0.5\imagewidth{} 0 0 0, clip=true, width=0.49\columnwidth]{figs/lqr_benchmark.pdf}
        \label{fig:lqr_pg_b}
    } 
    \caption{Convergence of policy gradient in LQ dynamic games to the Nash policy. 
    (a) Each player's linear feedback gain matrix $K_i$ converges to the unique Nash policies (dotted lines). 
    (b) The black dashed line shows upper bound of the number of iterations required to converge within $\varepsilon$ distance from Nash (2-norm). The actual convergence for this random initialization is shown as the solid line. }
    \label{fig:lqr_pg}
\end{figure}

Consider a four state discrete time linear dynamical system,
\[
   z(t+1) = Az(t) + B_1 u_1(t) + B_2 u_2(t) + B_3 u_3(t)
\]
where $z(t) \in \mathbb R^4$ and, for each $i\in\{1,2,3\}$, $u_i(t) \in \mathbb
R$ is the control for player $i$. The policy for each player is parameterized by
a linear feedback gain matrix, $u_i(t) = -K_i z(t)$. Moreover, each player seeks 
to minimize a quadratic cost
\[
    f_i(K_i, K_{-i}) = \mb{E}_{z_0\sim \mc{D}}\left[\sum_{t=0}^\infty \left(
        z(t)^TQ_iz(t) + \textstyle\sum_{j=1}^n u_j(t)^T R_{ij} u_j(t)
    \right)\right]
\]
which is a function of the coupled state variable $z(t)$, their own control $u_i(t)$ and all other agents' control $u_{-i}(t)$ over an infinite time horizon.
In an effort to learn a Nash equilibrium, each agent employs policy gradient. In
particular, they update their feedback policy via
\[K_i(t+1)=K_i(t)-\gamma_i\nabla_{K_i}f_i(K_i, K_{-i}).\]
It is fairly straightforward to compute the gradient of $f_i$ with respect to
$K_i$, the feedback gain that parameterizes player $i$'s control input $u_i$.
Indeed, 
\[\nabla_{K_i}f_i(K_i,K_{-i})=2(R_{ii}K_i - B_i^T P_i \widetilde A
    )\Sigma_K\]
    where
    \[\Sigma_K=\mb{E}_{z_0\sim \mc{D}}\left[\sum_{t=0}^{\infty} z(t)
    z(t)^T\right].\]
Hence, the collection of the agents' individual gradients is given by 
\[\omega(K_1, K_2, K_3) = \left(
        2(R_{ii}K_i - B_i^T P_i \widetilde A )\Sigma_K
        \right)_{i=1}^3
\]
\begin{remark}
    Note that $\omega$ can be zero at critical points or at points where $\sum_{t=0}^{\infty} z(t) z(t)^T$ drops rank. To prevent the latter possibility, we
        sample the initial condition from a distribution. That is, we take
        $z_0\sim \mc{D}$ so that $\mb{E}_{z_0\sim \mc{D}}z_0z_0^T$ is full rank. 
\end{remark}
For a given joint policy $(K_1, K_2, K_3)$, the closed loop dynamics are $\widetilde
A = A-B_1K_1-B_2K_2-B_3K_3$. The states $z(t)$ are obtained from simulating the
system.
For each $i$, the Riccati matrix $P_i$ is computed by solving the Riccati equation
\[
    P_i = \widetilde A^T P_i \widetilde A + Q_i + \sum_{j=1}^n K_j R_{ij} K_j.
 \]  
 Note that this Riccati equation is only used to compute the gradient of the cost functions with respect to a specific set of feedback gains. 
The system parameters used in this example are listed in Appendix~\ref{app:lqr}.

For the purpose of validating convergence, we can compute the Nash policies
$(K_1^\ast, K_2^\ast, K_3^\ast)$ by an established method
with coupled Riccati equations, explained in Appendix~\ref{app:lqr}. 
We use the learning rate $\gamma_i=\gamma$ defined as in
Theorem~\ref{thm:2player}. To compute $\gamma$ we first compute the game
Jacobian $J(K_1^*,\ K_2^*,\ K_3^*)$ at the Nash feedback gains and then find the
maximum eigenvalue  of $J^T J$ and minimum eigenvalue of $(J^T + J)^T (J^T +J)$
in a neighborhood of $(K_1^\ast, K_2^\ast, K_3^\ast)$ to determine the constants
$\alpha$ and $\beta$ as defined in Section~\ref{sec:deterministic}.

Figure~\ref{fig:lqr_pg} shows the convergence of the gradient updates to the
Nash policies. The $K_i$ are randomly initialized in a neighborhood of the known
Nash equilibrium and such that $\tilde{A}$ is stable.
 The number of iterations required to converge to an $\varepsilon$--differential
 Nash is bounded by the dashed black line in Figure~\ref{fig:lqr_pg_b}, which
 shows the curve of $(\varepsilon, T)$ pairs determined by
 Theorem~\ref{thm:2player}. However, this learning rate is not optimal, as
 choosing a larger $\gamma$ will result in faster convergence as empirically
 observed. 

 \begin{remark}[Stochastic Policy Gradient] We note that stochastic policy
     gradient with an unbiased estimator has similar convergence properties.
     Here, e.g., the state dynamics may be subject to zero-mean, finite-variance
     noise. As long as the estimator for the gradient is unbiased, the
     theoretical guarantees of the proceeding sections apply.
 \end{remark}

\subsection{Benchmark: matching pennies}
\begin{figure}[t!]
  \centering \centering
  \includegraphics[width=.8\columnwidth]{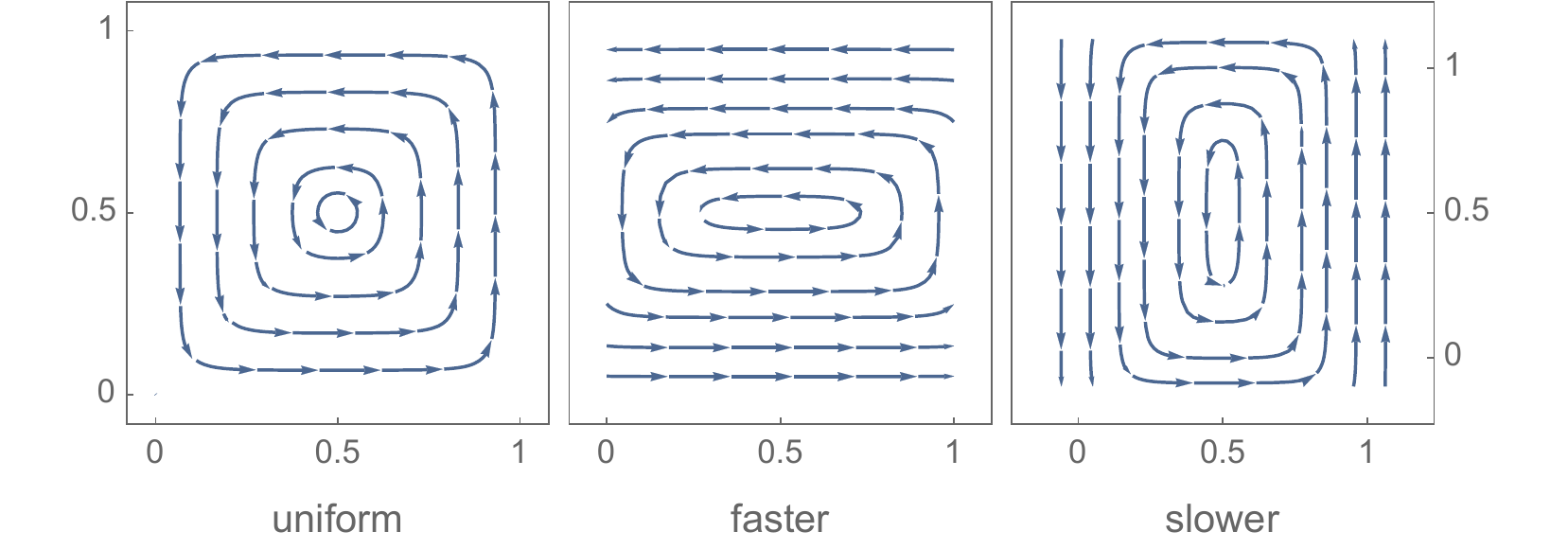}
  \caption{Gradient dynamics of the matching pennies game where agents learning have different learning rates. The vector field of the gradient dynamics are stretched along the faster agent's coordinate.}
  \label{fig:matching}
  \end{figure} 

The next example is again a  multi-agent policy gradient example in which there
are two players playing `matching pennies', a classic bimatrix game in which
agents have zero-sum costs associated with the matrices $(A,B)$ defined as
follows:
\[A=\begin{bmatrix} \ \ \ 1 & -1 \\ -1 & \ \ \ 1 \end{bmatrix},\quad B=\begin{bmatrix} -1 &
   \ \ \  1 \\ \ \ \ 1 & -1 \end{bmatrix}.\]
In particular, the players aim to minimize their respective costs $f_1(x,y) =
\pi(y)^T A \pi(x)$ and $f_2(x,y) = \pi(x)^T B \pi(y)$ where $\pi(x)$ is player
1's policy and $\pi(y)$ is player 2's policy. 
The class of policies the agents are optimizing over are the so-called `softmax'
policies defined by
\[\pi(z) = \left[\frac{e^{10z}}{e^{10z} + e^{10(1-z)}},\quad
\frac{e^{10(1-z)}}{e^{10z} + e^{10(1-z)}} \right],\]
and the update each player employs is a `smoothed best-response' which in
essence is a policy gradient update with respect to the softmax parameter and
each agents individual cost.
This game has been well studied in the game theory literature and we use it
illustrate the fact that non-uniform learning rates result in a warping of the
vector field associated with the agents' learning dynamics.

The mixed Nash equilibrium for this game is $(x^\ast,\ y^\ast) = (0.5,0.5)$, but
the Jacobian of the gradient dynamics at this fixed point is \[J(x^\ast,\
y^\ast)=\bmat{0 & 100 \\ -100 & 0}\] so that it has purely imaginary eigenvalues $\pm
100 i$, and therefore admits a limit cycle. 
Regardless, we can visualize the effects of non-uniform learning rates to the gradient dynamics in Figure~\ref{fig:matching}. We notice that the gradient flow stretches along the axes of the faster agent (the agent with a larger learning rate), and the fixed points of these dynamics remain constant.

  \subsection{Exploring the Effects of Non-uniform Learning Rates on the
  Learning Path}
  \label{subsec:torus}
  \begin{figure}[t!]
  \centering 
  \subfloat[][]{ 
      \label{fig:torus_roa}
      \includegraphics[width=.4\columnwidth]{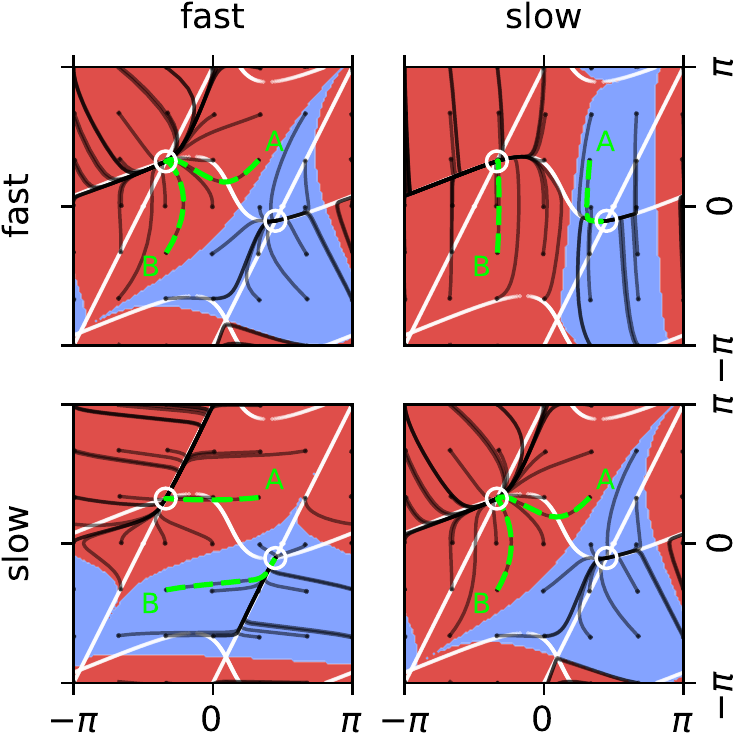}
  }
  \subfloat[][]{
    \label{fig:torus_stoch}
   $\qquad$ \includegraphics[width=.4\columnwidth]{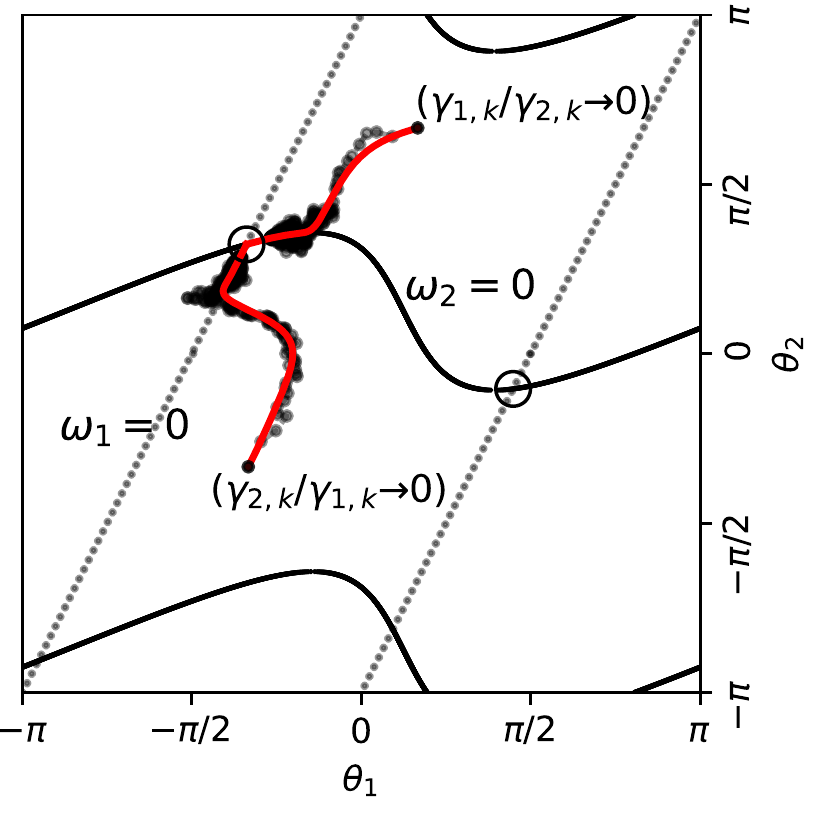}
  } 
  \caption{
    The effects of non-uniform learning rates on the path of convergence to the
equilibria. The zero lines for each player ($D_1f_1=0$ or $D_2f_2=0$) are
plotted as the diagonal and curved lines, and the two stable Nash equilibria as
circles (where $D^2_1 f_1 > 0$ and $D^2_2 f_2>0$). (a) In the deterministic
setting, the region of attractions for each equilibrium can be computed
numerically. Four scenarios are shown, with a combination of fast and slow
agents. The region of attractions for each Nash equilibrium are warped under
different learning rates. (b) In the stochastic setting, the samples (in black)
approximate the singularly perturbed differential equation (in red). Two initializations and learning rate configurations are plotted.}
  \label{fig:torus}
\end{figure} 
The examples presented so far all consider convergence (or non-convergence) to a
single equilibrium. In the following two examples, we investigate the effect of
non-uniform learning rates for more general non-convex settings in which there
are multiple equilibria.
The following example is a two-player game in which the agents' joint strategy
space is a torus.
That is, each player's strategy space is the unit circle $\mb{S}^1$. For each
$i\in\{1,2\}$, player $i$ has cost $f_i: \mathbb S^1 \times \mathbb S^1 \to \R$ given by 
\begin{equation*}
    f_i(\theta_i,\theta_{-i})=-\alpha_i\cos(\theta_i-\phi_i)+\cos(\theta_i-\theta_{-i})
\end{equation*}
  where $\alpha_i$ and $\phi_i$ are constants, and $\theta_i$ is player $i$'s choice
  variable. An interpretation of this game is that of a `location game' in which
  each player wishes to be near location $\phi_i$ but far from each other. This game has many applications including those which abstract nicely to coupled oscillators.

The game form---i.e., collection of individual gradients---is given by 
  \begin{equation}
      \omega(\theta_1, \theta_2) = \bmat{
          \alpha_1 \sin (\theta_1-\phi_1) -  \sin(\theta_1 - \theta_2) \\
          \alpha_2 \sin (\theta_2 -\phi_2) -  \sin(\theta_2 - \theta_1)
      },
  \end{equation}
  and the game Jacobian is composed of terms
  $\alpha_i\cos(\theta_i-\phi_i)-\cos(\theta_i-\theta_{-i})$, $i=1,2$ on the
  diagonal and $\cos(\theta_i-\theta_{-i})$, $i=1,2$ on the off-diagonal.

 The Nash equilibria of this game occur where $\omega(\theta_1, \theta_2)=0$ and where the diagonals of the game Jacobian are positive.
  The game has multiple Nash equilibria. We visualize the warping of the region of attraction of these equilibria under different learning rates, and the affinity of the ``faster'' player to its own zero line.

  In this example, we use constants $\phi = (0,\ \pi/8)$ and $\alpha = (1.0,\
  1.5)$. The joint strategy space can be viewed as a non-convex smooth manifold
  via an equivalence relationship,
  or equivalently, as players choosing $\theta_i\in\mb{R}$. There are two
 Nash equilibria, situated at $(-1.063,\ 1.014)$ and $(1.408,\ -0.325)$.
  These equilibria happen to also be stable differential Nash, and thus we
  expect the gradient dynamics to converge to them if initialized in the region
  of attraction. Which equilibrium it converges to, however, depends on the initialization and learning rates of agents.

  To investigate how non-uniform learning rates affect the agents' convergence to the two equilibria, we simulate agents learning at different rates, one fast and one slow. The fast agent's learning rate is set to $\gamma_1=0.171$ and the slow $\gamma_2 = 0.017$. Figure~\ref{fig:torus_roa} shows the trajectory of agents' learned strategies. Each of the four squares depicts the full strategy space on the torus from $-\pi$ to $\pi$ for both agents' actions, with $\theta_1$ on the $x$-axis and $\theta_2$ on the $y$-axis. The labels ``fast'' and ``slow'' indicate the learning rate of the corresponding agent. For example, in the bottom left square, agent 1 is the fast agent and agent 2 is the slow agent. Hence, the non-uniform update equation for that square becomes 
  $\theta_{k+1} = \theta_{k} - \mathrm{diag}(\gamma_1, \gamma_2) \omega(\theta_k).$

  The white lines indicate the points $x$ such that  $\omega_i(x)= 0$, and the
  intersection of the white lines indicate points $x$ such that $\omega(x)=0$.
  The two intersections marked as circles are the stable differential Nash
  equilibria. The unmarked intersections are either saddle points or other
  unstable equilibria. The black lines show different paths of the update
  equations under the non-uniform update equation, with initial points selected
  from a equally spaced $7\times 7$ grid. We highlight two paths in green
  (labeled A and B) which begin at $(\pi/3,\ \pi/3)$ and $(-\pi/3, -\pi/3)$.
  
  In the case where agents both learn at the same rate, $(\gamma_1,
  \gamma_1)$ and $(\gamma_2, \gamma_2)$,
  paths A and B both converge to the Nash equilibrium at $(-1.063,\ 1.014)$.
  However, when agents learn at different rates, the equilibrium to which the
  agents converge to, as well as the learning path, is no longer the same even
  starting at the same initial points.  This phenomena can also be captured by
  displaying the region of attraction for both Nash equilibria. The red region
  corresponds to initializations that will converge to the equilibrium contained
  in the red region (again indicated by a white circle). Analogously, the blue
  region corresponds to the region of attraction of the other equilibria.

 To provide an example of the stochastic setting in which agents have an
 unbiased estimator of their individual gradients, we choose learning rates
 according to Assumption~\ref{ass:lip}. In particular, we choose scaled learning
 rates $\gamma_{2,k} = \frac{1}{1+k\log(k+1)}$ and
 $\gamma_{1,k} = \frac{1}{1+k}$ such that
 $\gamma_{2,k}/\gamma_{1,k} \to 0$ as $k \to \infty$.
 Figure~\ref{fig:torus_stoch} shows the learning paths in this setting
 initialized at two different points, each with flipped learning rate
 configurations. The sample points approximate the singularly perturbed
 differential equation (shown in red) described in
 Section~\ref{subsec:stochastic_multiplelearn}.

  In both deterministic and stochastic settings, we observe the affinity of the
  faster agent to its own zero line. For example, the bottom left square (in
  Figure~\ref{fig:torus_roa}) and bottom left path (in
  Figure~\ref{fig:torus_stoch}) both have agent 1 as the faster agent, and the
  learning paths both tend to arrive to the line $\omega_1\equiv 0$ before finally converging to the Nash equilibrium. An interpretation of this is that the faster agent tries to be situated at the bottom of the ``valley'' of its own cost function. The faster agent tends to be at its \emph{own} minimum while it waits for the slower agent to change its strategy. As a Stackelberg interpretation, where there are followers and leaders, the slower agent would be the leader and faster agent the follower. In a sense, the slower agent has an advantage.

  \subsection{Multi-agent control and collision avoidance}

\begin{figure}[]
\centering  \subfloat[][]{
      \includegraphics[width=0.225 \columnwidth]{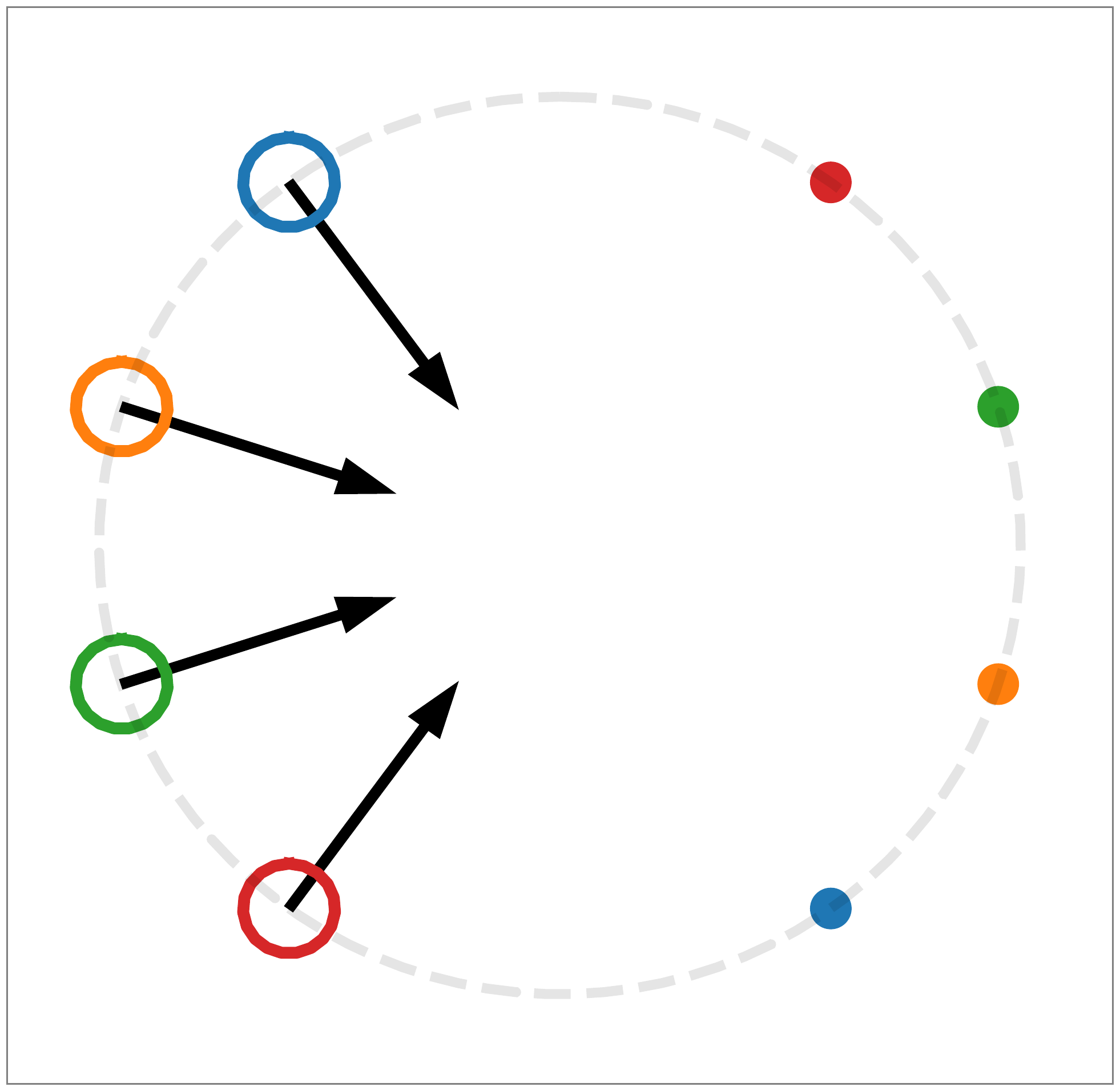}
      }
  \subfloat[][]{
      \includegraphics[width=0.225 \columnwidth]{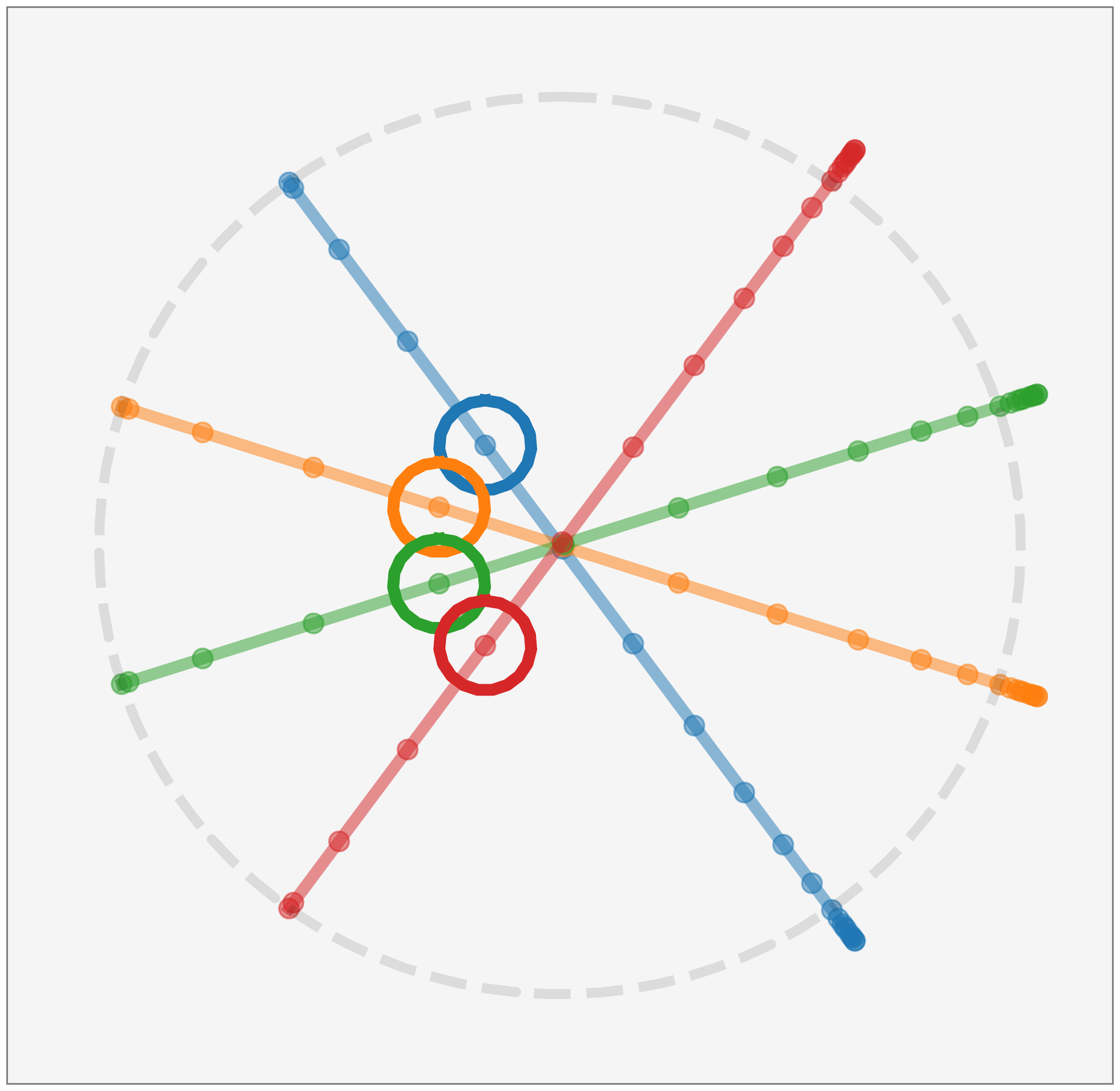}
      } 
  \subfloat[][]{
      \includegraphics[width=0.225 \columnwidth]{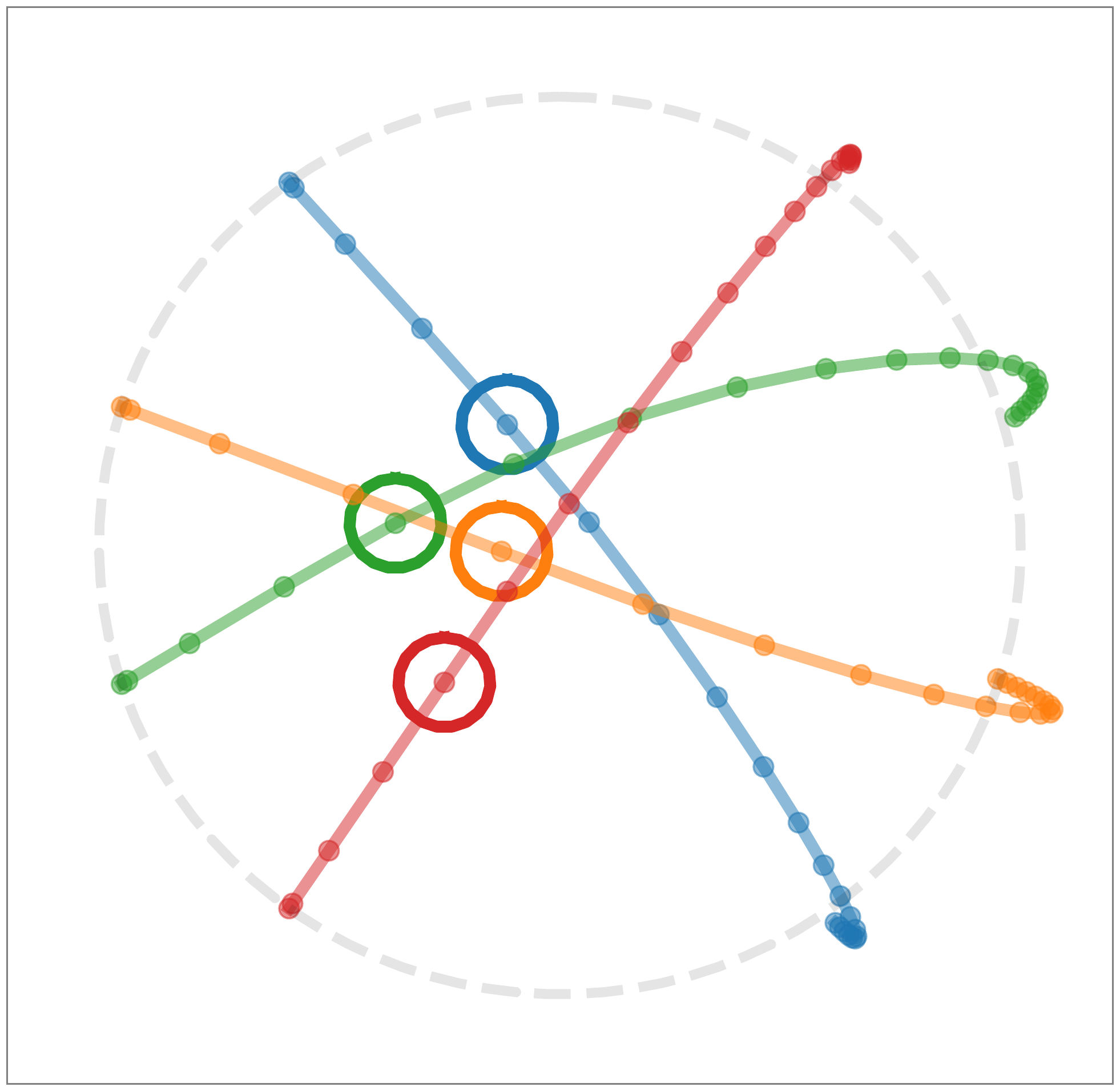} 
  }
  \subfloat[][]{
      \includegraphics[width=0.225 \columnwidth]{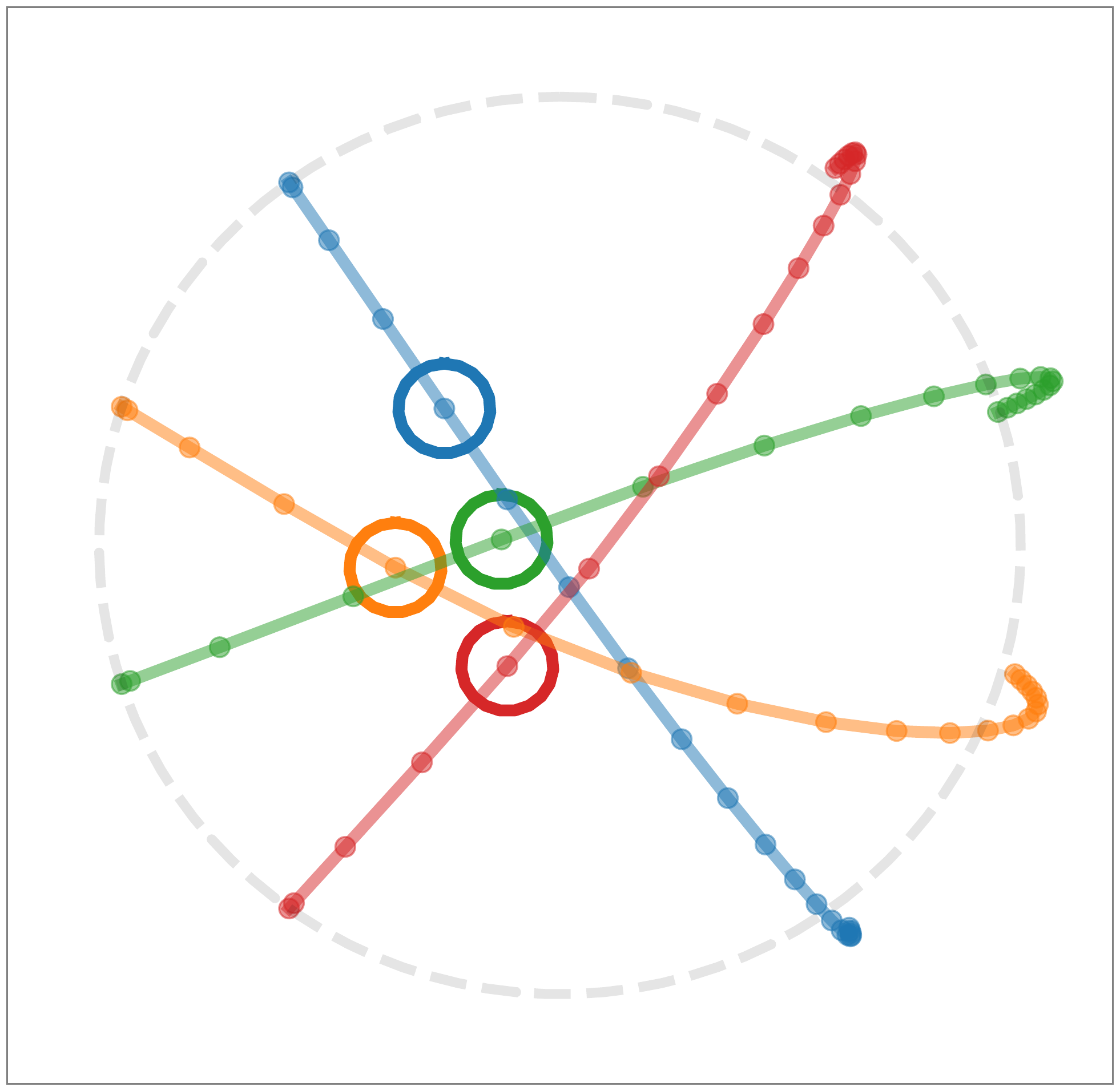}   
  } 
\caption{Minimum-fuel particle avoidance control example. 
(a) Each particle seeks to reach the opposite side of the circle using minimum fuel while avoiding each other. The circles represent the approximate boundaries around each particle at time $t=5$.
(b) The joint strategy $x=({\bf u}_1, \cdots, {\bf u}_4)$ is initialized to the minimum fuel solution ignoring interaction between particles. 
(c) Equilibrium solution achieved by setting the blue agent to have a slower learning rate. (d) Another equilibrium, where the red agent has the slower learning rate. }

\label{fig:bubble}
\end{figure}
The final example presents a practical use case for the gradient-based update.
Consider a non-cooperative game between four collision-avoiding agents where
they seek to arrive at a destination with minimum fuel while avoiding each
other. We show that the scaling between agents' learning rates dictates the
equilibrium solution to which they converges. This can be useful in designing
non-cooperative open-loop controllers where agents may choose to learn slower in
order to deviate less from their initial plan, perhaps in an attempt to incur
less `risk'.

Suppose there are four collision-avoiding particles traversing across a unit
circle. Each particle follows discrete-time linear dynamics \[z_i(t+1) = Az_i(t)
+ Bu_i(t)\] for $t=1,\cdots,N$ where 
\[A=\begin{bmatrix}
    I & h I \\
    0 & I
\end{bmatrix} \in \R^{4\times4}, \  %
B=\begin{bmatrix}
    h^2 I \\
    h I
\end{bmatrix} \in \R^{4\times2},\]
$I$ is the identity matrix, and $h=0.1$. These dynamics represent a typical
discretized version of the continuous dynamics $\ddot r_i = u_i$ in which $u_i\in \R^2$
represents a force vector used to accelerate the particle, and the state
$z_i=[r_i,\dot r_i]$ represents the particles position and velocity.   
Let ${\bf u}_i$ be the concatenated vector of control vectors for player $i$ for
all time---i.e., ${\bf u}_i = (u_i(1), \cdots, u_i(N))$ and let ${\bf u}=({\bf
u}_1, \cdots, {\bf u}_n)$.
Each particle $i$ aims to minimize a cost defined by 
\begin{align*}
    J_i({\bf u}) &=\sum_{t=1}^N \|u_i(t)\|_R^2 + 
                        \sum_{t=1}^{N+1} \| z_i(t) - \bar{z_i} \|_Q^2 + \sum_{j\neq i} \sum_{t=1}^{N+1} \rho e^{-\sigma \|z_i(t) - z_j(t)\|_S^2}
\end{align*}
where $\|\cdot \|_P$ denotes the quadratic norm---i.e., $\|z\|_P^2 = z^T P z$ with $P$ positive semi-definite. The first two terms of the cost correspond to the minimum fuel objective and quadratic cost from desired final state $\bar z_i$, a typical setup for optimal control problems. We use $R=\diag(0.1,0.1)$ and $Q=\diag(1,1,0,0)$. The final term of the cost function is the sum of all pairwise interaction terms
between the particles, modeled after the shape of a Gaussian which encodes smooth boundaries around the particles. We use 
constants $\rho=10$ and $\sigma=100$. 

Figure~\ref{fig:bubble} (a) visualizes the problem setup. Each particle's
initial position $z_i(0)$ is located on the left side of a unit circle; they are
separated by $\pi/5$, and their desired final positions, $\bar{z_i}$ for each
$i\in\{1, \ldots, 4\}$, are located directly opposite. The particles begin with
zero velocity and must solve for a minimum control solution that also avoids
collision with other particles as described by the objectives $J_i$ for each
$i$.

To initialize the gradient-based learning algorithms in the game setting, we
compute the 
optimal solution for each agent
ignoring the pairwise interaction terms, shown in Figure~\ref{fig:bubble} (b).
This can be computed using classical discrete-time LQR methods or by gradient
descent. Then, using this solution as the intialization for the game setting,
each agent descends their own gradient, i.e.
\[{\bf
u}_{i,k+1} = {\bf u}_{i,k} - \gamma_i D_i J_i({\bf u}),\] 
with different learning rates $\gamma_i$.
Just as the previous example shows, the relative learning rates of agents warp the region of attraction for the multiple equilibria. If we allow the red agent to learn slower, then the learning process converges to the equilibria shown in 
Figure~\ref{fig:bubble} (c), whereas if the blue agent learns quicker, then we converge to Figure~\ref{fig:bubble} (d). 
 Hence, all else being equal, the learning rates adopted by players greatly impact the equilibrium to which they converge.

\section{Discussion and Future Work}
\label{sec:discussion}
We analyze the convergence of gradient-based learning for non-cooperative agents
with continuous costs. We leverage existing dynamical systems theory and
stochastic approximation literature to provide convergence guarantees for
agents that learn myopically---that is, only using information about their own
gradient $D_i f_i$ to update their strategy. We provide guarantees for  the
case where agents 
are assumed to have oracle access to $D_if_i$ and the case where they have sufficient information
to compute an unbiased estimator. We also study the effects of non-uniform
learning rates. 

By preconditioning the gradient dynamics by $\Gamma$, a diagonal matrix where the diagonals represent the agents' learning rates, we can begin to understand how a changing learning rate relative to others can change the properties of the fixed points of the dynamics. Moreover, players do not know how a change in others' strategies affects its own cost ($D_j f_i$ where $j\neq i$). A possible extension to this paper is to develop update schemes that use this to provide more robust convergence guarantees for full information continuous games.
Different learning rates amongst agents also affects the region of attraction of
the game, hence starting from the same initial condition, agents may converge to a different equilibria. Agents may use this to their benefit, as shown in the last example. Such insights into the learning behavior of agents will be useful for providing guarantees on the design of control or incentive policies to coordinate agents. We also show through numerical examples that, counterintuitively, if an agent decides to learn slower, a stable differential Nash equilibrium can go unstable, resulting in learning dynamics that do not converge to Nash.

Beyond the the effects of learning rates, there are a number of avenues for
future inquiry. For instance, the results as stated apply to
continuous games with Euclidean strategy spaces. An interesting avenue to
pursue is the study of learning in games where the agents decision spaces are constrained sets or
Riemannian manifolds. The latter arises in a number of robotics applications and
in this case, the update rule will need to be modified
by the appropriately defined retraction such as
$x_{k+1}=\exp_{x_k}(\gamma_k(\omega(x_k)))$~\cite{shah:2017aa}. The former
arises in a variety of applications where the learning rules are abstractions of
agents learning in, e.g., physically constrained environments. The update rule
in this case 
will also need to be defined in terms of the appropriate proximal map thereby leading
to potentially non-smooth dynamics~\cite{bokar:2008aa,kushner:2003aa} which is
even more challenging in the stochastic setting. Yet, such extensions will lead
to a framework and set of analysis tools that apply to a broader class of
multi-agent learning algorithms.

While we present the work in the context of gradient-based learning in games, there is nothing that precludes the results from applying to update rules in other frameworks. Our results will apply to many other settings where agents myopically update their decision using a process of the form $x_{k+1} = x_k - \Gamma g(x_k)$. In this paper, we consider the special case where $g \equiv [D_1f_1 \cdots D_nf_n]$. In the stochastic setting, variants of multi-agent Q-learning conform to this setting since Q-learning can be written as a stochastic approximation update. 

Finally, as pointed out in~\cite{mazumdar:2018aa}, not all critical points of the dyanamics $\dot x = -\omega(x)$ that are attracting are necessarily Nash equilibria; one can see this simply by constructing a Jacobian with positive eigenvalues with at least one $D^2_{i}f_i$ with a non-positive eigenvalue. Understanding this phenomena will help us develop computational techniques to avoid them. Recent work has explored this in the context of zero-sum games~\cite{mazumdar:2019aa}, requiring coordination amongst the learning agents. However, when our objective is to study the learning behavior of autonomous agents seeking an equilibrium, an alternative perspective is needed.

\appendix

\section{Proofs}
\subsection{Deterministic Setting}
\label{app:proofs_deterministic}

The following proof follows nearly the same proof as the main result
in~\cite{argyros:1999aa} with a few minor modifications in the conclusion; we provide it here for posterity.
\begin{proof}[Proof~Proposition~\ref{prop:nonuniformone}]
    Since $\|I-\Gamma D\omega(x)\|<1$ for each $x\in B_{r_0}(x^\ast)$, as stated
    in the proposition statement,
   there exists $0<r'< r''<1$ such that $\|I-\Gamma
    D\omega(x)\|\leq r'<r''<1$ for all $x\in B_r(x^\ast)$. Since 
    \[\lim_{x\rar x^\ast}\|R(x-x^\ast)\|/\|x-x^\ast\|=0,\]
    for $0<1-r''<1$, there exists $\tilde{r}>0$ such that
    \[\|R(x-x^\ast)\|\leq (1-r'')\|x-x^\ast\|, \ \ \forall \ x\in B_{\tilde{r}}(x^\ast).\]
    As in the proposition statement, let $r$ be the largest, finite such $\tilde{r}$.
    Note that for arbitrary $c>0$, there exists $\tilde{r}>0$ such that the
    bound on $\|R(x-x^\ast)\|$ holds; hence, we choose $c=1-r''$ and find the
    largest such $\tilde{r}$ for which the bound holds.
  Combining the above bounds with the definition of $g$, we have that
  \[\|g(x)-g(x^\ast)\|\leq (1-\delta)\|x-x^\ast\|, \ \ \forall \ x\in
    B_{r^\ast}(x^\ast)\]
    where $\delta=r''-r'$ and $r^\ast=\min\{r_0,r\}$.  Hence, applying the result
    iteratively, we have that
    \[\|x_t-x^\ast\|\leq (1-\delta)^t\|x_0-x^\ast\|, \ \ \forall \ x_0\in
    B_{r^\ast}(x^\ast).\]
    Note that $0<1-\delta<1$. Using the approximation
    $1-\delta<\exp(-\delta)$, we have that
    \[\|x_T-x^\ast\|\leq \exp(-T\delta)\|x_0-x^\ast\|\] so that $x_t\in
    B_\vep(x^\ast)$ for all $t\geq T=\lceil \delta^{-1} \log(r^\ast/\vep)\rceil$.

\end{proof}
As noted in the remark, a similar result holds under the relaxed assumption that
$\rho(I-\Gamma D\omega(x))<1$ for all $x\in B_{r_0}(x^\ast)$.
To see this, we first note that $\rho(I-\Gamma D\omega(x))<1$ implies there
exists $c>0$ such that $\rho(I-\Gamma D\omega(x))\leq c<1$. Hence, given any
$\epsilon>0$, there is a norm on $\mb{R}^d$ and a $c>0$ such that $\|I-\Gamma D\omega\|\leq
c+\epsilon<1$ on $B_{r_0}(x^\ast)$~\cite[2.2.8]{ortega:1970aa}. Then, we can apply the same argument as
above using $r'=c+\vep$.

\subsection{Stochastic Setting}
\label{app:proofs_stocahstic}
A key tool used in the finite-time two-timescale analysis is the nonlinear
variation of constants formula of Alekseev~\cite{alekseev:1961aa},
\cite{borkar:2018aa}. 
\begin{theorem}
    Consider a differential equation 
    \[\dot{u}(t)=f(t,u(t)), \ t\geq 0,\]
    and its perturbation
    \[\dot{p}(t)=f(t,p(t))+\tilde{f}(t,p(t)), \ t\geq 0\]
    where $f,\tilde{f}:\mb{R}\times \mb{R}^d\rar \mb{R}^d$, $f\in C^1$, and
    $\tilde{f}\in
    C$. Let $u(t,t_0,p_0)$ and $p(t,t_0,p_0)$ denote the solutions of the above
    nonlinear systems for $t\geq t_0$ satisfying
    $u(t_0,t_0,p_0)=p(t_0,t_0,p_0)=p_0$, respectively. Then,
    \begin{align*}
        p(t,t_0,p_0)&=u(t,t_0,p_0)+\int_{t_0}^t\Phi(t,s,p(s,t_0,p_0)) \tilde{f}(s,p(s,t_0,p_0))\
        ds, \ t\geq t_0
    \end{align*}
    where $\Phi(t,s,u_0)$, for $u_0\in \mb{R}^d$, is the fundamental matrix of
    the linear system 
    \begin{equation}
        \dot{v}(t)=\frac{\partial f}{\partial u}(t,u(t,s,u_0))v(t), \ t\geq s
        \label{eq:vdot}
    \end{equation}
    with $\Phi(s,s,u_0)=I_d$, the $d$--dimensional identity matrix.
    \label{thm:alekseev}
\end{theorem}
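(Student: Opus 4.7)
The plan is to derive the formula by viewing the perturbed solution as a deformation of the unperturbed flow along the perturbation direction, using the semigroup property of the unperturbed flow together with the variational equation that governs the derivative of the flow with respect to initial data. I will assume throughout the standard $C^1$ smoothness of $f$, which ensures (by classical ODE theory) that $u(t,t_0,p_0)$ is $C^1$ in $p_0$ and that its partial derivative $\Psi(t,s,u_0) := \tfrac{\partial u}{\partial p_0}(t,s,u_0)$ satisfies the variational equation \eqref{eq:vdot} with $\Psi(s,s,u_0) = I_d$; hence $\Psi \equiv \Phi$.

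First I would introduce the auxiliary curve $w(s) := u(t,s,p(s,t_0,p_0))$ for $s \in [t_0,t]$, which smoothly interpolates between the two objects to be compared: at $s=t_0$ we have $w(t_0) = u(t,t_0,p_0)$, and at $s=t$, using $u(t,t,x)=x$, we get $w(t) = p(t,t_0,p_0)$. Thus $p(t,t_0,p_0)-u(t,t_0,p_0) = \int_{t_0}^t w'(s)\,ds$, and the entire task reduces to computing $w'(s)$.

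Next, by the chain rule,
\begin{equation*}
w'(s) = \frac{\partial u}{\partial s}(t,s,p(s,t_0,p_0)) + \Psi(t,s,p(s,t_0,p_0))\,\dot{p}(s,t_0,p_0).
\end{equation*}
To handle the $\partial_s u$ term, I use the semigroup identity $u(t,s,u(s,t_0,p_0)) = u(t,t_0,p_0)$, valid for the unperturbed equation; differentiating in $s$ gives the key relation
\begin{equation*}
\frac{\partial u}{\partial s}(t,s,x) = -\Psi(t,s,x)\,f(s,x),
\end{equation*}
evaluated at $x = u(s,t_0,p_0)$. Since this identity is an algebraic statement about the smooth function $u$ at an arbitrary point $x$ in the domain (the choice $x = u(s,t_0,p_0)$ was only used to derive it from the semigroup identity, but the relation extends to all $x$ in a neighborhood by density or by noting that both sides are continuous in $x$), I can substitute $x = p(s,t_0,p_0)$. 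Plugging in $\dot{p} = f + \tilde f$ yields cancellation of the $f$ terms and leaves
\begin{equation*}
w'(s) = \Psi(t,s,p(s,t_0,p_0))\,\tilde f(s,p(s,t_0,p_0)).
\end{equation*}
Integrating from $t_0$ to $t$ and using $\Psi = \Phi$ gives the claimed formula.

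The main obstacle is justifying the step $\partial_s u(t,s,x) = -\Psi(t,s,x) f(s,x)$ for general $x$, not merely along the trajectory $x = u(s,t_0,p_0)$ where the semigroup identity was applied. One clean way to handle this is to fix $s$ and $x$, treat $x$ as a free initial condition, and reparametrize: the identity $u(t,s,x) = u(t,\sigma,u(\sigma,s,x))$ holds for any $\sigma$ in the common interval of existence; differentiating in $\sigma$ at $\sigma = s$ and using $\tfrac{d}{d\sigma}u(\sigma,s,x)|_{\sigma=s} = f(s,x)$ together with $\Psi(t,s,x) = \tfrac{\partial u}{\partial y}(t,s,y)|_{y=x}$ gives the required algebraic identity pointwise in $(s,x)$. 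The only remaining care points are (i) that $p(\cdot,t_0,p_0)$ stays in the existence interval of $u(t,\cdot,\cdot)$, which is local but suffices for the stated $t \geq t_0$ with appropriate existence assumptions; and (ii) standard smooth dependence of ODE flows on initial conditions to identify $\Psi$ with the fundamental matrix $\Phi$ of \eqref{eq:vdot}, which is a textbook result for $C^1$ vector fields.
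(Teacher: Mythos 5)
Your proof is correct. Note, however, that the paper does not prove this statement at all: it is imported verbatim as Alekseev's nonlinear variation-of-constants formula, with citations to Alekseev (1961) and Borkar et al.\ (2018), so there is no in-paper argument to compare against. Your derivation --- interpolating via $w(s)=u(t,s,p(s,t_0,p_0))$, differentiating with the chain rule, and cancelling the $f$ terms using the identity $\partial_s u(t,s,x)=-\Phi(t,s,x)f(s,x)$ --- is the standard textbook proof. One small remark: your first justification for extending that identity off the trajectory (``by density or continuity'') would not suffice on its own, since an identity holding along a curve need not propagate to a neighborhood; but the reparametrization argument you give at the end, differentiating $u(t,s,x)=u(t,\sigma,u(\sigma,s,x))$ in $\sigma$ at $\sigma=s$ for arbitrary fixed $(s,x)$, establishes it pointwise and closes that gap, so the final argument is sound.
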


Consider a locally asymptotically stable differential Nash equilibrium
$x^\ast=(
\lambda(x_2^\ast),x_2^\ast)\in X$ and let $B_{r_0}(x^\ast)$ be an ${r}_0>0$ radius ball around $x^\ast$ contained in the
region of attraction.
Stability implies that the Jacobian $J(
\lambda(x_2^\ast),x_2^\ast)$ is positive definite and by the converse Lyapunov
theorem~\cite[Chapter~5]{sastry:1999aa} there
exists local Lyapunov functions for the dynamics $\dot{x}_2(t)=-\tau
D_2f_2(\lambda(x_2(t)),x_2(t))$ and for the dynamics
$\dot{x}_1(t)=-D_1f_1(x_1(t),
x_2)$, for each fixed $x_2$. In particular, there exists a local Lyapunov
function $V\in C^1(\mb{R}^{d_1})$ with $\lim_{\|x_2\|\uparrow
\infty}V(x_2)=\infty$, and $\la \nabla V(x_2), D_2f_2(\lambda(x_2),x_2)\ra<0$
for $x_2\neq
x_2^\ast$. 
For $r>0$, let $V^r=\{x\in \text{dom}(V):\ V(x)\leq r\}$. Then, there is also 
$r>r_0>0$ and $\epsilon_0>0$ such that for $\epsilon<\epsilon_0$,
\[\{x_2\in \mb{R}^{d_2}|\ \|x_2-x_2^\ast\|\leq \epsilon\}\subseteq V^{r_0}\subset
\mc{N}_{\epsilon_0}(V^{r_0})\subseteq V^r\subset \text{dom}(V)\] where
$\mc{N}_{\epsilon_0}(V^{r_0})=\{x\in \mb{R}^{d_2}|\ \exists x'\in V^{r_0}\
\text{s.t.} \|x'-x\|\leq \epsilon_0\}$. An analogously defined $\tilde{V}$
exists for the dynamics $\dot{x}_1$ for each fixed $x_2$. 

For now, fix $n_0$ sufficiently large; we specify this a bit further down. Define the
event \[\mc{E}_\tind=\{\bar{x}_1(t)\in V^r\ \forall t\in [\tilde{t}_{\no},
\tilde{t}_\tind]\}\] where
\[\bar{x}_1(t)=x_{1,k}+\frac{t-\tilde{t}_k}{\gamma_{1,k}}(x_{1,k+1}-x_{1,k})\] are
linear interpolates defined for $t\in (\tilde{t}_k, \tilde{t}_{k+1})$ with
$\tilde{t}_{k+1}=\tilde{t}_k+\gamma_{1,k}$ and $\tilde{t}_0=0$. The basic idea
of the proof is to leverage Alekseev's formula (Theorem~\ref{thm:alekseev}) 
to bound the difference between the linearly
    interpolated trajectories (i.e., \emph{asymptotic psuedo-trajectories}) and
        the flow of the corresponding limiting differential equation on each
        continuous time interval between each of the successive iterates $k$ and
        $k+1$ by a number that decays asymptotically. Then, for large enough
        $k$, a union bound is used over all the remaining time intervals
        to construct a concentration bound. This is done first for fast player
        (i.e. player 1),
    to show that $x_{1,k}$ tracks  $\lambda(x_{2,k})$, and
    then for the slow player (i.e., player 2). %

Following~\citet{borkar:2018aa}, we can express the linear interpolates for any
$\tind\geq \no$ as
    $\bar{x}_1(\tilde{t}_{\tind+1})\textstyle=\bar{x}_1(\tilde{t}_{\no})-\sum_{\ell=\no}^n\gamma_{1,\ell}(D_1f_1(x_{\ell})+w_{1,\ell+1})$
where
\[\gamma_{1,\ell}D_1f_1(x_{\ell})=\int_{\tilde{t}_\ell}^{\tilde{t}_{\ell+1}}D_1f_1(\bar{x}_1(\tilde{t}_\ell),x_{2,\ell})\]
and similarly for the $w_{1,\ell+1}$ term.
Adding and subtracting
$\int_{\tilde{t}_{n_0}}^{\tilde{t}_{n+1}}D_1f_1(\bar{x}_1(s), x_{2}(s))$, Alekseev's formula 
can be applied to get 
\begin{align*}
    \bar{x}_1(t)&=x_1(t)+\Phi_1(t,s,\bar{x}_1(\tilde{t}_{\no}),x_2(\tilde{t}_{\no}))(\bar{x}_1(\tilde{t}_{\no})-x_1(\tilde{t}_{\no}))+\int_{\tilde{t}_{\no}}^t\Phi_2(t,s,\bar{x}_1(s),x_2(s))\zeta_1(s)\ ds
\end{align*}
where $x_2(t)\equiv x_2$ is constant (since $\dot{x}_2=0$), $x_1(t)=\lambda(x_2)$, 
\[\zeta_1(s)=-D_1f_1(\bar{x}_1(\tilde{t}_k),x_2(\tilde{t}_k))+D_1f_1(\bar{x}_1(s),x_2(s))+w_{1,k+1},\]
 and  where for $t\geq s$, $\Phi_1(\cdot)$ satisfies linear
 system 
 \begin{equation*}
     \dot{\Phi}_1(t,s,x_{0})=J_1(x_1(t),x_2(t))\Phi_1(t,s,x_{0}),
 \end{equation*}
 with initial data $\Phi_1(t,s,x_{0})=I$ and $x_0=(x_{1,0},x_{2,0})$ and where
 $J_1$ the Jacobian of
$-D_1f_1(\cdot,x_2)$. 

Given that $x^\ast=(\lambda(x_2^\ast), x_2^\ast)$ is a stable differential
Nash equilibrium, $J_1(x^\ast)$ is positive definite. Hence,  as in
\cite[Lemma~5.3]{thoppe:2018aa}, we can find
$M$,
$\kappa_1>0$ such that for $t\geq s$, $x_{1,0}\in V^r$,
$\|\Phi_1(t,s,x_{1,0},x_{2,0})\|\leq Me^{-\kappa_1(t-s)}$; this result follows
from 
standard results on stability of linear systems (see, e.g., \citet[\S7.2,
 Theorem~33]{callier:1991aa})  along with 
 a bound on
 \[\int_{s}^t\|D^2_1f_1(x_{1},x_{2}(\tau,s,\tilde{x}_0))-D_1^2f_1(x^\ast)\|d\tau\]
 for $\tilde{x}_0\in V^r$ (see, e.g.,~\cite[Lemma~5.2]{thoppe:2018aa}).

Consider $z_k=\lambda(x_{2,k})$---i.e., where $D_1f_1(x_{1,k},x_{2,k})=0$. Then,
using a Taylor expansion of the implicitly defined $\lambda$, we get
\begin{equation}
    z_{k+1}=z_k+D \lambda(x_{2,k})(x_{2,k+1}-x_{2,k})+\delta_{k+1}
    \label{eq:taylor}
\end{equation}
where $\|\delta_{k+1}\|\leq L_{r}\|x_{2,k+1}-x_{2,k}\|^2$ is the error from
the remainder terms. Plugging in $x_{2,k+1}$, we have
\begin{align*}
    z_{k+1}&=z_k+\gamma_{1,k}(-D_1f_1(z_k,x_{2,k})+\tau_k\lambda(x_{2,k})(w_{2,k+1}-D_2f_2(x_{1,k},x_{2,k}))+\gamma_{1,k}^{-1}\delta_{k+1}).
\end{align*}
The terms after $-D_1f_1$ are $o(1)$, and hence asymptotically negligible, so
that this $z$ sequence tracks dynamics as $x_{1,k}$. 
We show that with high
probability, they
asymptotically contract to one another. 
Define constant
$H_{\no}=(\|\bar{x}_1(\tilde{t}_{\no}-x_1(\tilde{t}_{\no})\|+\|\bar{z}(\tilde{t}_{\no})-x_1(\tilde{t}_{\no})\|)$
and
\begin{align*}
    S_{1,\tind}&=\sum_{\ell=\no}^{\tind-1}\Big(\int_{\tilde{t}_\ell}^{\tilde{t}_{\ell+1}}\Phi_1(\tilde{t}_\tind,s,\bar{x}_1(\tilde{t}_\ell),x_{2}(\tilde{t}_\ell))
    ds\Big) w_{2,\ell+1}.
\end{align*}
Moreover, let $\tau_k=\gamma_{2,k}/\gamma_{1,k}$. 
\begin{lemma}
    For any $\tind\geq \no$,
     there exists $K>0$ such that
    \begin{align*}
       \|x_{1,\tind}-z_\tind\|\leq&
       K\Big(\|S_{1,\tind}\|+e^{-\kappa_1(\tilde{t}_\tind-\tilde{t}_{\no})}H_{\no}+\sup_{\no\leq \ell\leq \tind-1}\gamma_{1,\ell}+\sup_{\no\leq
            \ell\leq
        \tind-1}\gamma_{1,\ell}\|w_{1,\ell+1}\|^2\\
        &+\sup_{\no\leq \ell\leq \tind-1}\tau_\ell+\sup_{\no\leq \ell\leq
        \tind-1}\tau_\ell\|w_{2,\ell+1}\|^2\Big)
    \end{align*}
    conditioned on ${\mc{E}}_\tind$.
        \label{lem:defK}
\end{lemma}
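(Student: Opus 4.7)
The plan is to apply Alekseev's nonlinear variation of constants formula (Theorem~\ref{thm:alekseev}) to both linear interpolates $\bar{x}_1(\cdot)$ and $\bar{z}(\cdot)$ (where $\bar{z}$ is the piecewise-linear extension of the companion sequence $z_k=\lambda(x_{2,k})$), viewing each as a perturbation of the fast ODE $\dot u=-D_1 f_1(u,x_2)$ with the slow variable $x_2$ held frozen. With $x_2$ frozen, $\lambda(x_2)$ is the equilibrium of this ODE, so both decompositions share the \emph{same} unperturbed flow $x_1(\cdot,\tilde t_{\no},\cdot)$ up to initial condition. Subtracting the two Alekseev formulas cancels the common flow and yields
\[
x_{1,\tind}-z_\tind=\Phi_1(\tilde t_\tind,\tilde t_{\no},\cdot)\bigl(\bar{x}_1(\tilde t_{\no})-\bar{z}(\tilde t_{\no})\bigr)+\int_{\tilde t_{\no}}^{\tilde t_\tind}\Phi_1(\tilde t_\tind,s,\cdot)\bigl(\zeta_1(s)-\zeta_2(s)\bigr)\,ds,
\]
where $\zeta_1$ is the perturbation displayed immediately above the lemma, and $\zeta_2$ is the analogous per-interval perturbation for $\bar z$ read off from \eqref{eq:taylor} after substituting $x_{2,k+1}$: namely a drift contribution of order $\tau_k D\lambda(x_{2,k})D_2 f_2(x_{1,k},x_{2,k})$, a noise contribution of order $\tau_k D\lambda(x_{2,k})w_{2,k+1}$, and the Taylor remainder $\gamma_{1,k}^{-1}\delta_{k+1}$.

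Next, I would use the conditioning on $\mc{E}_\tind$ to invoke the exponential bound $\|\Phi_1(t,s,\cdot)\|\le M e^{-\kappa_1(t-s)}$ established just before the lemma. The boundary term is immediately bounded by $Me^{-\kappa_1(\tilde t_\tind-\tilde t_{\no})}H_{\no}$ by the triangle inequality and the definition of $H_{\no}$. Inside the integral I would split $\zeta_1-\zeta_2$ into four classes and bound each uniformly in $\ell$: (i) the piecewise-constant $w_{\cdot,\ell+1}$ martingale increment, which assembles into $S_{1,\tind}$; (ii) the discretization differences $D_1 f_1(\bar{x}_1(s),x_2(s))-D_1 f_1(\bar{x}_1(\tilde t_\ell),x_2(\tilde t_\ell))$ and their $\bar z$-analogue, controlled by Lipschitzness (Assumption~\ref{ass:lip}) and the iterate increment bound $\|\bar{x}_1(s)-\bar{x}_1(\tilde t_\ell)\|=O(\gamma_{1,\ell}+\gamma_{1,\ell}\|w_{1,\ell+1}\|)$, which after the elementary inequality $ab\le\tfrac12(a^2+b^2)$ yields $O(\gamma_{1,\ell})+O(\gamma_{1,\ell}\|w_{1,\ell+1}\|^2)$; (iii) the $\tau_\ell$-scaled drift and noise from the $\bar z$ recursion, contributing $O(\tau_\ell)+O(\tau_\ell\|w_{2,\ell+1}\|^2)$ by the same trick; and (iv) the Taylor remainder $\gamma_{1,\ell}^{-1}\delta_{\ell+1}=O(\gamma_{1,\ell}^{-1}\|x_{2,\ell+1}-x_{2,\ell}\|^2)=O(\tau_\ell^2\gamma_{1,\ell}(1+\|w_{2,\ell+1}\|^2))$, which for large enough $\no$ is absorbed into class (iii) since $\tau_\ell\gamma_{1,\ell}\le 1$.

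The final mechanical step converts the sum over $\ell$ into suprema: each per-interval contribution of magnitude $B_\ell$ integrated against $Me^{-\kappa_1(\tilde t_\tind-s)}$ produces a bound of the form $(M/\kappa_1)(e^{-\kappa_1(\tilde t_\tind-\tilde t_{\ell+1})}-e^{-\kappa_1(\tilde t_\tind-\tilde t_\ell)})B_\ell$, and summing telescopes to at most $(M/\kappa_1)\sup_{\no\le\ell\le\tind-1}B_\ell$. Applying this to each of the four classes above and combining gives the six terms in the stated bound, with the single constant $K$ absorbing $M/\kappa_1$ together with the Lipschitz constants of $D_1 f_1$, $\lambda$, and $D\lambda$, and the uniform bound on the Taylor-remainder coefficient $L_r$ over $V^r$. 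The main obstacle is the bookkeeping in class (iv): the Taylor-remainder-and-noise cross terms must be matched against exactly the $\tau_\ell+\tau_\ell\|w_{2,\ell+1}\|^2$ scaling appearing in the stated bound, and every invocation of Lipschitzness and of the spectral estimate on $\Phi_1$ must be legitimate under the conditioning on $\mc{E}_\tind$, which is the only reason these constants are uniform rather than sample-path-dependent.
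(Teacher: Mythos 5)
Your proposal is correct and takes essentially the same route the paper intends: the paper does not write out a standalone proof of this lemma, but the scaffolding it supplies in the appendix (Alekseev's formula applied to $\bar{x}_1$, the Taylor recursion for $z_k$ with remainder $\delta_{k+1}$, the exponential bound on $\Phi_1$, and the definitions of $H_{n_0}$, $S_{1,n}$, and $\tau_k$) is exactly the decomposition you carry out, with the remaining bookkeeping deferred to the cited results of Borkar--Pattathil and Thoppe--Borkar. Your four-way split of the perturbation terms and the telescoping conversion of the integral against $e^{-\kappa_1(\tilde{t}_n-s)}$ into suprema reproduce that standard argument faithfully.
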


In order to construct a high-probability bound for $x_{2,k}$, we need a similar bound as in Lemma~\ref{lem:defK} can be
constructed for $x_{2,k}$. 
Define the event
$\hat{\mc{E}}_\tind=\{\bar{x}_2(t)\in V^{{r}}\ \forall t\in [\hat{t}_{\no},
\hat{t}_\tind]\}$ where
$\bar{x}_2(t)=x_{2,k}+\frac{t-\hat{t}_k}{\gamma_{2,k}}(x_{2,k+1}-x_{2,k})$ is
the linear interpolated points between the samples $\{x_{2,k}\}$,
$\hat{t}_{k+1}=\hat{t}_k+\gamma_{1,k}$, and $\hat{t}_0=0$. Then as above,
Alekseev's formula can again be applied to get 
\begin{align*}
    \bar{x}_2
    &(t)=x_2(t,\hat{t}_{\no},x_2(\hat{t}_{\no}))+\Phi_2(t,\hat{t}_{\no},
    \bar{x}_2(\hat{t}_{\no}))(\bar{x}_2(\hat{t}_{\no})-x_2(\hat{t}_{\no}))+\int_{\hat{t}_{\no}}^t\Phi_2(t,s,\bar{x}_2(s))\zeta_1(s)\
    ds
\end{align*}
where $x_2(t)\equiv x_2^\ast$,
\begin{align*}
    \zeta_1(s)&=D_2f_2(\lambda(x_{2,k}),x_{2,k})-D_2f_2(\lambda(\bar{x}_2(s)),\bar{x}_2(s))+D_2f_2(x_k)-D_2f_2(\lambda(x_{2,k}),x_{2,k})+w_{2,k+1},
\end{align*}
and $\Phi_2$ is the solution to a linear system
with dynamics $J_2(\lambda(x_2^\ast),x_2^\ast)$, the Jacobian of
$-D_2f_2(\lambda(\cdot),\cdot)$, and with initial data $\Phi_2(s,s,x_{2,0})=I$. This
linear system, as above, has bound $\|\Phi_2(t,s,x_{2,0})\|\leq
M_2e^{\kappa_2(t-1)}$ for some $M_2,\kappa_2>0$.
Define
\begin{align*}
    S_{2,\tind}&=\sum_{\ell=\no}^{\tind-1}\Big(\int_{\hat{t}_{\ell}}^{\hat{t}_{\ell+1}}\Phi_2(\hat{t}_\tind,s,\bar{x}_2(\hat{t}_\ell))ds\Big)w_{2,\ell+1}.
\end{align*}
\begin{lemma}
    For any $\tind\geq \no$, there exists $\bar{K}>0$ such that
    \begin{align*}
        \|\bar{x}_2(\hat{t}_\tind)-x_2(\hat{t}_\tind)\|\leq&\textstyle
        \bar{K}\big(\|S_{2,\tind}\|+\sup_{\no\leq \ell\leq \tind-1}\|S_{1,\ell}\|\textstyle+\sup_{\no\leq \ell\leq \tind-1}\gamma_{1,\ell}+\sup_{\no\leq
            \ell\leq
        \tind-1}\gamma_{1,\ell}\|w_{1,\ell+1}\|^2\notag\\
        &\textstyle+\sup_{\no\leq \ell\leq \tind-1}\tau_\ell+\sup_{\no\leq \ell\leq
        \tind-1}\tau_\ell\|w_{2,\ell+1}\|^2\textstyle+e^{\kappa_2(\hat{t}_\tind-\hat{t}_{\no})}\|\bar{x}_2(\hat{t}_{\no})-x_2(\hat{t}_{\no})\|\notag\\
        &\textstyle+\sup_{\no\leq
        \ell\leq \tind-1}\tau_kH_{\no}\big)
    \end{align*}
     conditioned on $\tilde{\mc{E}}_\tind$.
    \label{lem:defbarK}
\end{lemma}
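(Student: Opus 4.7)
The plan is to start from the Alekseev decomposition already displayed immediately before the lemma and bound each of the resulting pieces separately, using Lemma~\ref{lem:defK} to absorb the fast-slow coupling. First, since the flow term $x_2(t,\hat{t}_{\no},x_2(\hat{t}_{\no}))$ evaluated at $t=\hat{t}_\tind$ equals $x_2(\hat{t}_\tind)$, the deterministic piece cancels with the LHS, leaving
\[
\bar{x}_2(\hat{t}_\tind)-x_2(\hat{t}_\tind)=\Phi_2(\hat{t}_\tind,\hat{t}_{\no},\bar{x}_2(\hat{t}_{\no}))(\bar{x}_2(\hat{t}_{\no})-x_2(\hat{t}_{\no}))+\int_{\hat{t}_{\no}}^{\hat{t}_\tind}\Phi_2(\hat{t}_\tind,s,\bar{x}_2(s))\zeta_1(s)\,ds.
\]
The initial-data transport term is immediately controlled by the exponential bound $\|\Phi_2(t,s,\cdot)\|\le M_2 e^{-\kappa_2(t-s)}$, which produces the $e^{\kappa_2(\hat{t}_\tind-\hat{t}_{\no})}\|\bar{x}_2(\hat{t}_{\no})-x_2(\hat{t}_{\no})\|$ contribution (interpreted as the $-\kappa_2$ exponent in the displayed inequality).

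Next, I split $\zeta_1(s)$ into three pieces conditioned on $\tilde{\mc{E}}_\tind$: the slow-timescale discretization error $\zeta_1^A(s)=D_2f_2(\lambda(x_{2,k}),x_{2,k})-D_2f_2(\lambda(\bar{x}_2(s)),\bar{x}_2(s))$, the fast-slow tracking error $\zeta_1^B(s)=D_2f_2(x_k)-D_2f_2(\lambda(x_{2,k}),x_{2,k})$, and the noise $w_{2,k+1}$. The noise contribution integrates exactly to $S_{2,\tind}$ by the definition given in the text, since $w_{2,k+1}$ is piecewise constant on each slow interval $[\hat{t}_\ell,\hat{t}_{\ell+1}]$. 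For $\zeta_1^A$, Assumption~\ref{ass:lip} together with local smoothness of $\lambda$ gives Lipschitz continuity of $D_2f_2\circ(\lambda,\mathrm{id})$, so $\|\zeta_1^A(s)\|\lesssim\|\bar{x}_2(s)-\bar{x}_2(\hat{t}_\ell)\|\le\|x_{2,\ell+1}-x_{2,\ell}\|$; bounding the update by $\gamma_{2,\ell}(L_\omega(1+\|x_\ell\|)+\|w_{2,\ell+1}\|)$ on $V^r$ and substituting $\gamma_{2,\ell}=\tau_\ell\gamma_{1,\ell}$, then integrating against $\Phi_2$ and pulling the sup out through the geometric $1/\kappa_2$ factor, yields the $\sup_\ell\tau_\ell$ and $\sup_\ell\tau_\ell\|w_{2,\ell+1}\|^2$ terms (after a standard quadratic majorization that transforms a $\|w\|$ into $1+\|w\|^2$).

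The delicate piece is $\zeta_1^B$. Lipschitz continuity of $D_2f_2$ in its first argument gives $\|\zeta_1^B(s)\|\le L\|x_{1,k}-\lambda(x_{2,k})\|=L\|x_{1,k}-z_k\|$, and I apply Lemma~\ref{lem:defK} uniformly over $k\in[\no,\tind-1]$ to bound this by
\[
K\bigl(\sup_\ell\|S_{1,\ell}\|+\sup_\ell e^{-\kappa_1(\tilde{t}_\ell-\tilde{t}_{\no})}H_{\no}+\sup_\ell\gamma_{1,\ell}+\sup_\ell\gamma_{1,\ell}\|w_{1,\ell+1}\|^2+\sup_\ell\tau_\ell+\sup_\ell\tau_\ell\|w_{2,\ell+1}\|^2\bigr).
\]
Integrating each of these uniform bounds against $\Phi_2$ on the slow scale and using $\int_{\hat{t}_{\no}}^{\hat{t}_\tind}e^{-\kappa_2(\hat{t}_\tind-s)}\,ds\le 1/\kappa_2$ produces all but one of the advertised terms directly. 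The remaining $\sup_\ell\tau_\ell H_{\no}$ term arises by converting the Riemann sum over slow-scale intervals: the contribution of $e^{-\kappa_1(\tilde{t}_\ell-\tilde{t}_{\no})}H_{\no}$, when summed against $e^{-\kappa_2(\hat{t}_\tind-\hat{t}_\ell)}\gamma_{2,\ell}$, is rewritten using $\gamma_{2,\ell}=\tau_\ell\gamma_{1,\ell}$ so that $\sup_\ell\tau_\ell$ factors out of the sum and what remains telescopes geometrically to a constant times $H_{\no}$.

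The main obstacle is this final step: the two timescales have different exponential rates ($\kappa_1$ on the fast, $\kappa_2$ on the slow) and the sum $\sum_\ell e^{-\kappa_2(\hat{t}_\tind-\hat{t}_\ell)}\gamma_{1,\ell}e^{-\kappa_1(\tilde{t}_\ell-\tilde{t}_{\no})}$ must be shown to be uniformly bounded in $\tind$ before the $\tau_\ell$ factor can be cleanly extracted. The argument proceeds by splitting the sum at the midpoint of the interval $[\no,\tind-1]$; on one half $e^{-\kappa_1(\tilde{t}_\ell-\tilde{t}_{\no})}$ is exponentially small while on the other $e^{-\kappa_2(\hat{t}_\tind-\hat{t}_\ell)}$ is, and using boundedness of the $\gamma_{1,\ell}$ on $V^r$ makes the remaining sum comparable to $\int_0^\infty e^{-c t}\,dt<\infty$. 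Absorbing all constants into a single $\bar{K}>0$ depending only on $M_2,\kappa_2,K$, and the Lipschitz constants completes the argument, in parallel to the two-timescale estimate in~\cite{borkar:2018aa}.
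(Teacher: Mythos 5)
Your proof is correct and follows exactly the route the paper intends: the paper itself gives no proof of this lemma, deferring to \citet{borkar:2018aa}, and the machinery it sets up immediately beforehand --- the Alekseev decomposition of $\bar{x}_2$, the exponential bound on $\Phi_2$, and the use of Lemma~\ref{lem:defK} to control the fast--slow coupling term in $\zeta_1$ --- is precisely what you assemble, including the extraction of the $\sup_\ell \tau_\ell H_{\no}$ term from the slow-scale Riemann sum. Your reading of the $\Phi_2$ growth bound as having exponent $-\kappa_2(t-s)$ (correcting an apparent sign typo in the appendix) is consistent with how that quantity is used in the setup of Theorem~\ref{thm:conjecturetrack}.
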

Using the above lemmas, we can get the desired guarantees on $x_{1,k}$ and
$x_{2,k}$ as in~\cite{borkar:2018aa}.

\section{Additional Examples}
\label{app:examples}
In this appendix, we include additional examples and information about examples
contained in the main body of the text.
\subsection{LQ game system parameters}
\label{app:lqr}

The following are the system parameters and resulting Nash feedback gains
computed using the coupled Riccatti equations:
\begin{gather*}
    A = \bmat{0.402 & 1.037 & -0.565 & 0.115 \\
        -0.021 & -0.990 & -0.584 & 0.457 \\
        0.377 & 1.105 & 0.698 & 1.192 \\
        -0.177 & -0.332 & 0.237 & -0.286 
        },\ 
    B_1 = \bmat{1 \\
        1 \\
        0 \\
        0 },\ 
    B_2 = \bmat{0 \\
        1 \\
        1 \\
        0 }\, 
    B_3 = \bmat{0 \\
        0 \\
        1 \\
        1 }
        \\
    Q_1 =\bmat{0.48 & 0 & 0 & 0 \\
        0 & 0.64 & 0 & 0 \\
        0 & 0 & 0.74 & 0 \\
        0 & 0 & 0 & 0.71 },\ 
    Q_2 = \bmat{0.01 & 0 & 0 & 0 \\
        0 & 0.41 & 0 & 0 \\
        0 & 0 & 0.71 & 0 \\
        0 & 0 & 0 & 0.44 },\ 
    Q_3 = \bmat{1.00 & 0 & 0 & 0 \\
        0 & 0.55 & 0 & 0 \\
        0 & 0 & 0.86 & 0 \\
        0 & 0 & 0 & 0.63 }, 
        \\
    R_{11} = \bmat{5.47 },\ 
    R_{12} = \bmat{7.16 },\ 
    R_{13} = \bmat{5.31 },\ 
    R_{21} = \bmat{5.21 },\ 
    R_{22} = \bmat{5.36 },\\
    R_{23} = \bmat{7.63 },\ 
    R_{31} = \bmat{9.71 },\ 
    R_{32} = \bmat{2.34 },\ 
    R_{33} = \bmat{5.26 },\ 
\end{gather*}
and \begin{gather*}
    K_{1} = \bmat{0.023 \\
    -0.201 \\
    -0.228 \\
    0.104 }^T,
    K_{2} = \bmat{0.060 \\
    0.029 \\
    0.026 \\
    0.274 }^T,
    K_{3} = \bmat{0.033 \\
    0.082 \\
    0.138 \\
    0.177 }^T.  
\end{gather*}

We use the  following values for constants used in the LQ game: $\alpha=19.4$
and $2.90 \times 10^{5}$; hence, we use $\gamma = 1.52 \times 10^{-5}$.

\subsection{Coupled Riccati equations}
\label{app:coupled_riccati}

We require the following standard assumption adopted in LQ games.
\begin{assumption}
Either $(A,B_1,\sqrt{Q_1})$ or $(A,B_2,\sqrt{Q_2})$ is stabilizable-detectable.
\end{assumption}

Without loss of generality, we assume $(A,B_1,\sqrt{Q_1})$ is stabilizable-detectable.
We employ the following iterative Lyapunov  algorithm for finding the Nash
equilibrium to the linear quadratic game~\cite{gajic1995}:
\begin{description}[itemsep=0pt, topsep=5pt, leftmargin=25pt]

    \item[step 1.]  
    Initialize $P_1^{(0)}$ to be the unique positive definite solution to the
 Riccati equation,
    \begin{align}
   P_1^{(0)} =  A^TP_1^{(0)}A  -
   A^TP_1^{(0)}B_1\big(R_1+B_1^TP_1^{(0)}B_1\big)^{-1}B_1^TP_1^{(0)}A +Q_1,
    \end{align}
    and compute the corresponding gain matrix for player 1 by  
    \begin{align}
    K_1^{(0)} & = (R_1 + B_1^TP_1^{(0)}B_1)^{-1} B_1^TP_1^{(0)}A.
    \end{align}
   Solve for $P_2^{(0)}$ by
    \begin{align}
     P_2^{(0)} = \bar{A}^TP_2^{(0)}\bar{A}  - \bar{A}^TP_2^{(0)}B_2\big(R_2+B_2^TP_2^{(0)}B_2\big)^{-1}B_2^TP_2^{(0)}\bar{A} +Q_1
    \end{align}
    where $\bar{A} = A-B_1K_1^{(0)}$
    and compute the corresponding gain matrix for player 2 by
    \begin{align}
    K_2^{(0)} & = (R_2 + B_2^TP_2^{(0)}B_2)^{-1} B_2^TP_2^{(0)}\big(A-B_1K_1^{(0)}\big).
    \end{align}
    We note that initializing using this method ensures that the initial closed loop matrix $A-B_1K_1^{(0)}-B_2K_2^{(0)}$ is stable.  
\item[step 2.]  Given $P_1^{(k)}$, $P_2^{(k)}$, $K_1^{(k)}$, and $K_2^{(k)}$,
    update the feedback gains using the following update rules:
    \begin{align}
    K_1^{(k+1)} & = (R_{11} + B_1^TP_1^{(k)}B_1)^{-1} B_1^TP_1^{(k)}(A-B_2K_2^{(k)}) \\
    K_2^{(k+1)} & = (R_{22} + B_2^TP_2^{(k)}B_2)^{-1}B_2^TP_2^{(k)}(A-B_1K_1^{(k)})
    \end{align}
\item[step 3.]  Update the cost-to-go matrices by solving the
    Lyapunov equations:
    \begin{eqnarray*}
        P_1^{(k)} &=&
        (\bar A -B_2K_2^{(k)})^TP_1^{(k+1)}(\bar A -B_2 K_2^{(k)})
        + (K_1^{(k)})^TR_{11}K_1^{(k)} + (K_2^{(k)})^TR_{12}K_2^{(k)} + Q_1\\
        P_2^{(k)} &=&
        (\bar A -B_2K_2^{(k)})^TP_2^{(k+1)}(\bar A -B_2K_2^{(k)}) + (K_1^{(k)})^TR_{21}K_1^{(k)}  + (K_2^{(k)})^TR_{22}K_2^{(k)} + Q_2
    \end{eqnarray*}
\item[step 4.] Repeat \textbf{steps 2--3} until the gains converge.
\end{description}

The extension to $n$-players is fairly straightforward; more detail can be found in the
seminal reference~\cite{basar:1998aa}.

\bibliographystyle{plainnat}
\bibliography{UAI_2019_refs}

\end{document}